\numberwithin{equation}{section}
\newtheorem{thm}{Theorem}[section]
\newtheorem{lemma}[thm]{Lemma}
\newtheorem{prop}[thm]{Proposition}
\newtheorem{cor}[thm]{Corollary}
{\theorembodyfont{\rmfamily}
\newtheorem{defn}[thm]{Definition}

\newtheorem{rmk}[thm]{Remark}
}
\newcommand{\qed}{\hfill \mbox{\raggedright \rule{.07in}{.1in}}}
\newenvironment{proof}{\vspace{1ex}\noindent{\bf
Proof}\hspace{0.5em}}{\hfill\qed\vspace{1ex}}
\newcommand{\R}{{\mathbb R}}
\newcommand{\Z}{{\mathbb Z}}
\newcommand{\N}{{\mathbb N}}
\newcommand{\E}{{\mathbb E}}
\newcommand{\BBW}{{\mathbb W}}
\newcommand{\cB}{{\mathcal B}}
\newcommand{\fB}{{\mathfrak B}}
\newcommand{\cW}{{\mathcal W}}
\newcommand{\overbar}[1]{\mkern 3.7mu\overline{\mkern-3.7mu#1\mkern-0.5mu}\mkern 0.5mu}
\newcommand{\overbaro}[1]{\mkern 1.5mu\overline{\mkern-1.5mu#1\mkern-1.5mu}\mkern 1.5mu}
\newcommand{\overbarr}[1]{\mkern 3.5mu\overline{\mkern-3.5mu#1\mkern-0.5mu}\mkern 0.5mu}
\newcommand{\bH}{{\overbar{H}}}
\newcommand{\bX}{{\overbaro X}}
\newcommand{\bY}{{\overbaro Y}}
\newcommand{\bm}{{\overbaro m}}
\newcommand{\bF}{\overbarr F}
\newcommand{\bmu}{\bar\mu}
\newcommand{\bpi}{\bar\pi}
\newcommand{\bV}{\overbaro V}
\newcommand{\hY}{{\widehat Y}}
\newcommand{\hV}{{\widehat V}}
\newcommand{\tV}{{\widetilde V}}
\newcommand{\tchi}{{\widetilde \chi}}
\newcommand{\tm}{{\widetilde m}}
\newcommand{\eps}{\epsilon}
\newcommand{\Vol}{\operatorname{Vol}}
\newcommand{\sgn}{\operatorname{sgn}}
\newcommand{\SMALL}{\textstyle}
\newcommand{\BIG}{\displaystyle}
\title{Statistical properties for flows with unbounded roof function, including
the Lorenz attractor}
\author{
P\'eter B\'alint \thanks{
MTA-BME Stochastics Research Group, Budapest University of Technology and Economics, Egry J\'ozsef
u. 1, H-1111 Budapest, Hungary and Institute of Mathematics,
Budapest University of Technology and Economics,
H-1111 Egry J\'ozsef u. 1
Budapest, Hungary.
pet@math.bme.hu}
\and
Ian Melbourne \thanks{
Mathematics Institute, University of Warwick, Coventry, CV4 7AL, UK.
i.melbourne@warwick.ac.uk}
}
\date{5 March 2018.  Revised 23 June 2018}
\begin{document}

\maketitle

\begin{abstract}
For geometric
Lorenz attractors (including the classical Lorenz attractor)
we obtain a greatly simplified proof of the central limit theorem
which applies also to
the more general class of codimension two singular hyperbolic attractors.

We also obtain the functional central limit theorem and moment estimates, as well as iterated versions of these results.  A consequence is deterministic homogenisation (convergence to a stochastic differential equation) for fast-slow dynamical systems whenever the fast dynamics is singularly hyperbolic of codimension two.
\end{abstract}

 \section{Introduction}
 \label{sec:intro}

The classical Lorenz attractor for the Lorenz
equations~\cite{Lorenz63},
\begin{align*}
\dot x = 10(y - x), \quad
\dot y = 28x -y -xz, \quad
\dot z = xy - \textstyle{\frac83}z,
\end{align*}
is a prototypical example of a chaotic dynamical system.
There has been much interest in studying rigorously the properties of geometric Lorenz attractors~\cite{AfraimovicBykovSilnikov77,GuckenWilliams79}
 which resemble the numerically-observed classical Lorenz attractor.
A computer-assisted proof that the attractor for the classical Lorenz equations is indeed a geometric Lorenz attractor was subsequently provided by~\cite{Tucker02}.
In contrast to Axiom~A attractors~\cite{Smale67}, the Lorenz attractor contains an equilibrium (steady-state for the flow) and cannot be structurally stable, though it is robustly nontrivial and transitive~\cite{Tucker02}.
Moreover, the Lorenz attractor is {\em singular hyperbolic}~\cite{MoralesPacificoPujals04} and exhibits
strong statistical properties such as expansivity~\cite{APPV09}, the central limit theorem (CLT)~\cite{HM07}, moment estimates~\cite{MTorok12},
and exponential decay of correlations~\cite{AraujoM16}.

Recently,~\cite{OvsyannikovTuraev17} (see also previous work of~\cite{DumortierKokubuOka95}) gave an analytic proof of existence of geometric Lorenz attractors in the
extended Lorenz model
\[
\dot x = y, \quad \dot y=-\lambda y+\gamma x(1-z)-\delta x^3,
\quad \dot z=-\alpha z+\beta x^2.
\]
A consequence of the current paper, in conjunction with~\cite{AraujoMsub}, is that
CLTs and moment estimates hold for these attractors.
Exponential decay of correlations remains an open question: the proof in~\cite{AraujoM16} relies on the existence of a smooth stable foliation for the flow, which holds for the classical Lorenz attractor~\cite{AraujoM17} but not for the examples of~\cite{DumortierKokubuOka95,OvsyannikovTuraev17}.
(As far as the results on the CLT and moment estimates go, our results for the extended Lorenz model are completely analytic, whereas results for the classical Lorenz attractor rely on the
computer-assisted proof in~\cite{Tucker02}.)

More generally, our results apply to all singular hyperbolic attractors for three-dimensional flows.
By~\cite{MoralesPacificoPujals04}, this incorporates all nontrivial robustly transitive attractors for three-dimensional flows (including nontrivial Axiom~A attractors as well as classical and geometric Lorenz attractors).
Our results apply
also to
  codimension two singular hyperbolic attractors in arbitrary dimension.

A standard step in the analysis of Lorenz attractors and singular hyperbolic attractors is to consider a suitable Poincar\'e map with good nonuniform hyperbolicity properties.   The return time function (roof function) is generally unbounded due to the presence of steady-states for the flow.
Due to this unboundedness, the
original proof of the CLT for the classical Lorenz equations~\cite{HM07} is quite complicated, relying on an inducing scheme that involves phase-space exclusion arguments of the type in~\cite{BenedicksCarleson85} to remove points that return too quickly to a vicinity of the steady-states.

As we pointed out in an earlier unpublished preprint version of this paper, it is possible to give a much simpler proof of the CLT than the one in~\cite{HM07}.  In this paper, we provide the details.   As in~\cite{HM07},
we prove also the functional version, namely the weak invariance principle (WIP).
We also provide moment estimates as advertised in~\cite[Section~5.3]{MTorok12} and our results apply to the general class of singular hyperbolic attractors in~\cite{AraujoMsub} (it is not clear that the argument in~\cite{HM07} applies in this generality).
In addition, we prove iterated versions of the WIP and
moment estimates as advertised in~\cite[Section~10.3]{KM16}.  By~\cite{KM16,KM17} it is therefore possible to prove homogenisation results where fast-slow systems converge to a limiting stochastic differential equation.

\begin{rmk}  One version of an earlier preprint of this paper was called
``Decay of correlations for flows with unbounded roof function, including the
infinite horizon planar periodic Lorentz gas''.  The correlation decay aspects of that preprint were incorrect and are subsumed by~\cite{BBMsub}. The remainder of that preprint is the basis of the current paper.

We note also that a ``rearrangement'' argument plays a much larger role in the older versions, but is still useful for optimal moment control when the $L^p$ class of the roof function is restricted, see Section~\ref{sec:rearrange} and Remark~\ref{rmk:rearrange}(ii).  (Rearrangement turns out not to be required for singular hyperbolic attractors where the roof function lies in $L^p$ for all $p<\infty$.)
\end{rmk}

\subsection{Consequences for singular hyperbolic attractors}

A specific example to which our main results apply is the classical Lorenz attractor~\cite{Lorenz63,Tucker02} (or more generally, the class of geometric Lorenz
attractors~\cite{AfraimovicBykovSilnikov77,GuckenWilliams79}).
More generally still, we consider singular hyperbolic attractors~\cite{MoralesPacificoPujals04}.

\begin{defn}
\label{def:SH}
Let $T_t:\R^n\to\R^n$ be a $C^r$ flow where $n\ge3$, $r>1$.
A compact invariant set
$\Lambda$ is {\em singular hyperbolic}
  if 

\noindent (i) All steady-states in $\Lambda$ are hyperbolic.
\\
\noindent (ii) The tangent bundle over $\Lambda$ can be
written as a continuous $DT_t$-invariant sum
$T_\Lambda M=E^s\oplus E^{cu}$,
and there exist constants $C>0$, $\lambda\in(0,1)$ such that
for all $x \in \Lambda$, $t\ge0$,
\begin{description}
\item[domination of the splitting:]
$\|DT_t | E^s_x\| \cdot \|DT_{-t} | E^{cu}_{T_tx}\| \le C \lambda^t$,
\item[uniform contraction along $E^s$:]
$\|DT_t | E^s_x\| \le C \lambda^t$,
\item[volume expansion along $E^{cu}$:]
$|\det(DT_t| E^{cu}_x)|\geq C^{-1}  \lambda^{-t}$.
\end{description}
Such an invariant set is {\em codimension two} if $\dim E^{cu}_x=2$ for $x\in\Lambda$.

A {\em singular hyperbolic attractor} is a singular hyperbolic set that is also an attractor (attracting and transitive).  We recall that $\Lambda$ is attracting if  there exists a positively invariant set $U\subset\R^n$ such that
$\Lambda=\bigcap_{t\ge0} T_tU$, and that $\Lambda$ is transitive if there exists a dense orbit in $\Lambda$.
\end{defn}

\begin{rmk}  (a) If $\Lambda$ is a codimension two singular hyperbolic attractor and
$x_0\in\Lambda$ is a steady-state, then $x_0$ is {\em Lorenz-like}: the linearized vector field at $x_0$ restricted to $E_{x_0}^{cu}$ has real eigenvalues $\lambda^s$, $\lambda^u$ satisfying $-\lambda^u<\lambda^s<0<\lambda^u$.
\\[.75ex]
(b)  
By~\cite[Section~8.1]{AraujoMsub}, an attracting codimension two singular hyperbolic attracting set admits a spectral decomposition into a finite disjoint union of singular hyperbolic attractors.
Hence, there is no loss of generality in assuming transitivity.
\\[.75ex]
(c)   As explained in Section~\ref{sec:Lorenz}, the results for singular hyperbolic attractors described below make use of~\cite{AraujoMsub} which relies heavily on the codimension two assumption.
Statistical properties for singular hyperbolic attractors with $\dim E^{cu}_x>2$ remains an interesting open question.
\end{rmk}

We now describe the main results of this paper in the setting of codimension two singular hyperbolic attractors $\Lambda\subset \R^n$.  More general statements for flows with unbounded roof function are given in the body of the paper.

\paragraph{Notation}
For $a,b\in\R^d$, we define the outer product
$a\otimes b=ab^T\in\R^{d\times d}$.
For real-valued functions $f,\,g$, the integral $\int f\,dg$ denotes
the It\^o integral (where defined).  Similarly, for vector-valued functions,
$\int f\otimes dg$ denotes matrices of It\^o integrals.
Also $\int f\otimes\circ dg$ denotes matrices of Stratonovich integrals.

Let $\eta\in(0,1]$.  Given $v:M\to\R^d$, where $M$ is a metric space, define $|v|_{\eta}=\sup_{x\neq x'}|v(x)-v(x')|/d(x,x')^\eta$ and $\|v\|_{\eta}=|v|_\infty+|v|_{\eta}$.
Let $C^\eta(M,\R^d)$ be the space of observables $v:M\to\R^d$ with $\|v\|_{\eta}<\infty$.  When $d=1$, we write $C^\eta(M)$.

Similarly, for $k\ge0$ an integer, we write 
$b\in C^{k,\eta}(\R^d\times M,\R^d)$ if $b=b(x,y)$ is $C^k$ in $x$ uniformly in $y$, and $C^\eta$ in $y$ uniformly in $x$.
We write $b\in C^{k+,\eta}(\R^d\times M,\R^d)$ if in addition there exists $\eta'\in(0,1)$ such that $\sup_y\|\partial b^k/\partial x^k(\cdot, y)\|_{\eta'}<\infty$.

If $\mu$ is a specified Borel probability measure on $M$, then we
define
$C_0^\eta(M,\R^d)=\{v\in C^\eta(M,\R^d):\int_M v\,d\mu=0\}$.

\vspace{2ex}

Let $T_t:\R^n\to\R^n$ denote a flow
with codimension two singular hyperbolic attractor~$\Lambda$.
By~\cite{AraujoMsub,APPV09,LeplaideurYang17}, there is a unique SRB (Sinai-Ruelle-Bowen) probability measure $\mu$ supported on $\Lambda$.
In particular, $\mu$ is a physical measure: there is an open set $U\subset\R^n$ and a subset $U'\subset U$ with $\Vol(U\setminus U')=0$ such that
$\lim_{t\to\infty}t^{-1}v_t(x_0)=\int_\Lambda v\,d\mu$
for every continuous function $v:\R^n\to\R$ and every initial condition $x_0\in U'$.
Here, $v_t=\int_0^t v\circ T_s\,ds$.  (In the case of the classical Lorenz attractor, $U=\R^3$.)  Fix $\eta\in(0,1]$.

\begin{thm}[CLT]  Let $v\in C_0^\eta(\Lambda)$.
Then the limit
$\sigma^2=\lim_{t\to\infty}t^{-1}\int_\Lambda v_t^2\,d\mu$
exists, and for all $c\in\R$,
\[
\lim_{t\to\infty}\mu(x_0\in\Lambda:t^{-1/2}v_t(x_0)\le c)=
(2\pi\sigma^2)^{-1/2}\int_{-\infty}^c e^{-y^2/(2\sigma^2)}\,dy.
\]
(That is, $t^{-1/2}v_t\to_d N(0,\sigma^2)$ as $t\to\infty$.)
Moreover, there is a closed subspace $S\subset C_0^\eta(\Lambda)$ of infinite codimension such that $\sigma^2>0$ for all $v\in C_0^\eta(\Lambda)\setminus S$.
\end{thm}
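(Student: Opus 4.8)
\medskip
\noindent\textbf{Proof proposal.}
The plan is to realise the flow as a suspension over a Poincar\'e map and to transfer the classical CLT for the map to the flow, using that the roof function lies in $L^p(\mu_X)$ for every finite $p$. First I would fix a cross-section transverse to the flow and pass to the return map $F\colon X\to X$ with return-time (roof) function $\rho\colon X\to(0,\infty)$, invoking~\cite{AraujoMsub} and its consequences for codimension two singular hyperbolic attractors: $F$ admits a hyperbolic Young-tower structure with exponential tails, so that, after quotienting along stable leaves, one obtains a nonuniformly expanding map with exponential decay of correlations and a martingale--coboundary decomposition for H\"older observables; moreover $\rho\in L^p(\mu_X)$ for all $p<\infty$, where $\mu_X$ is the $F$-invariant probability with $\mu=(\mu_X\times\Leb)/\bar\rho$ and $\bar\rho=\int_X\rho\,d\mu_X$. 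To $v$ I attach the induced observable $\Phi(x)=\int_0^{\rho(x)}v\circ T_s(x)\,ds$, which satisfies $\int_X\Phi\,d\mu_X=\bar\rho\int_\Lambda v\,d\mu=0$ and $|\Phi|\le|v|_\infty\,\rho$, hence $\Phi\in\bigcap_{p<\infty}L^p(\mu_X)$. The one structural subtlety is that $\Phi$ need not be globally H\"older, since $\rho$ blows up near the steady-states; I would deal with this exactly as in the abstract framework developed below, exploiting that $\Phi$ is H\"older off the singular set with $L^p$ control near it, which is enough to run the limit theorems for $F$ with observable $\Phi$.

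\medskip
Granting this, the CLT for the flow follows in two steps. For the existence of $\sigma^2$: exponential decay of correlations for $F$ together with $\Phi\in L^2$ makes the Green--Kubo series $\sigma_\Phi^2=\int_X\Phi^2\,d\mu_X+2\sum_{n\ge1}\int_X\Phi\cdot(\Phi\circ F^n)\,d\mu_X$ absolutely convergent, and the martingale--coboundary decomposition gives $n^{-1/2}\sum_{j=0}^{n-1}\Phi\circ F^j\to_d N(0,\sigma_\Phi^2)$. To pass to continuous time, let $n(x,t)$ count the returns to $X$ before time $t$; then $\bigl|v_t(x)-\sum_{j=0}^{n(x,t)-1}\Phi(F^jx)\bigr|\le|v|_\infty\max_{0\le j\le n(x,t)}\rho(F^jx)$, and since $\rho\in L^p$ for some $p>2$, a union bound yields $\mu_X\bigl(\max_{0\le j\le n}\rho\circ F^j>\eps\sqrt n\bigr)\le C\eps^{-p}n^{1-p/2}\to0$, so the error is $o(\sqrt t)$ in probability. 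Combining this with $n(x,t)/t\to1/\bar\rho$ (Birkhoff) and a random-time-change argument produces $t^{-1/2}v_t\to_d N(0,\sigma^2)$ with $\sigma^2=\sigma_\Phi^2/\bar\rho$, while the uniform $L^p$ bounds upgrade convergence in probability to $t^{-1}\int_\Lambda v_t^2\,d\mu\to\sigma^2$. No inducing beyond the natural Poincar\'e map is needed, which is precisely the source of the simplification over~\cite{HM07}.

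\medskip
For the final assertion, put $S=\{v\in C_0^\eta(\Lambda):\sigma^2(v)=0\}$. Since $v\mapsto\Phi_v$ is linear and the Green--Kubo series converges locally uniformly, $v\mapsto\sigma^2(v)$ is continuous, so $S$ is closed. Now $\sigma^2(v)=0$ iff $\sigma_\Phi^2=0$ iff $\Phi$ is an $L^2$-coboundary for $F$, and unwinding the suspension this is equivalent to $v$ being a flow-coboundary, $v_t=w\circ T_t-w$; a Liv\v{s}ic-type argument (the singular set contains no periodic orbit) lets one take $w$ continuous along periodic orbits. Hence, for any periodic orbit $\gamma$ of period $\tau$ through a point $x$, $v\in S$ forces $\oint_\gamma v\,ds=v_\tau(x)=w(T_\tau x)-w(x)=0$. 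A nontrivial singular hyperbolic attractor carries infinitely many distinct periodic orbits $\gamma_1,\gamma_2,\dots$, and for each $m$ a H\"older bump function supported near $\gamma_m$ but away from $\gamma_1,\dots,\gamma_{m-1}$, corrected by a further bump supported off all of them so that its $\mu$-integral vanishes, gives $v\in C_0^\eta(\Lambda)$ with $\oint_{\gamma_m}v\neq0$ and $\oint_{\gamma_k}v=0$ for $k<m$. Thus the functionals $v\mapsto\oint_{\gamma_k}v\,ds$ are linearly independent on $C_0^\eta(\Lambda)$, so $\bigcap_k\ker\bigl(\oint_{\gamma_k}\cdot\bigr)$, and a fortiori $S$, has infinite codimension.

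\medskip
The main obstacle throughout is the unboundedness of $\rho$ at the steady-states, which surfaces both in the regularity of $\Phi$ and in the error $v_t-\sum_j\Phi\circ F^j$; the resolution, and the reason this proof is short, is that $\rho\in L^p$ for every $p<\infty$ (via~\cite{AraujoMsub}), comfortably above the threshold $p>2$ used above---this is also why the rearrangement device needed for borderline roof functions is unnecessary here. The residual technical care goes into the two places where the singular set could intrude: running the limit theorems for $F$ on the not-globally-H\"older observable $\Phi$, and the Liv\v{s}ic step in the degenerate case $\sigma^2=0$.
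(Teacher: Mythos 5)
Your overall skeleton (induce an observable over a cross-section, prove a CLT for the section map, pass to the flow by controlling lap numbers, and obtain nondegeneracy from periodic-orbit obstructions) matches the paper's strategy, but there is a genuine gap at exactly the step that constitutes the paper's main content. You induce only over the Poincar\'e map $f:X\to X$ with roof $\rho=h$, and you assert that the Young-tower/martingale machinery can be ``run'' for the observable $\Phi(x)=\int_0^{h(x)}v\circ T_s\,ds$, deferring the difficulty that $\Phi$ is unbounded and not H\"older near the steady-states to ``the abstract framework developed below'' --- but no such framework appears in your argument. This is not a routine technicality: the transfer-operator/martingale--coboundary estimates for nonuniformly expanding maps require weighted Lipschitz control of the observable, and obtaining it for an observable that blows up at the singular set over the map $f$ is precisely why \cite{HM07} resorted to an elaborate inducing-with-exclusions scheme. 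Your claim that ``no inducing beyond the natural Poincar\'e map is needed'' is the opposite of what the paper does: the simplification comes from inducing \emph{further}, to $F=f^\tau:Y\to Y$, which after quotienting stable leaves is Gibbs--Markov, and working with the induced roof $H=h_\tau$ (which has moments of all orders by Proposition~\ref{prop:H}/Remark~\ref{rmk:beta}(c)) and the induced observable $V=\int_0^{H}v\circ T_u\,du$. The decisive estimates are the weighted Lipschitz bounds $|1_{Y_j}V|_{\theta_1}\ll\|v\|_\eta\inf_{Y_j}H$ together with the flow-level contraction conditions \eqref{eq:inf_exp}--\eqref{eq:hY} (verified for codimension two singular hyperbolic attractors in Lemma~\ref{lem:SH} using \cite{AraujoMsub}), which yield the martingale--coboundary decomposition of Corollary~\ref{cor:m_exp}; the passage to continuous time is then made directly from the suspension $Y^H$ to $M$ via \cite[Theorem~6.1]{KM16} (Theorem~\ref{thm:CLT_exp}), bypassing the level $X$ altogether. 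Your second paragraph (Green--Kubo series, lap-number/random-time-change argument) only works once the discrete CLT for the unbounded induced observable is actually established, so the burden of proof sits exactly where you have placed a placeholder. Minor related inaccuracies: the tower tails for $\tau$ are superpolynomial, not exponential, in the general singular hyperbolic setting; and the asserted convergence $t^{-1}\int_\Lambda v_t^2\,d\mu\to\sigma^2$ needs moment bounds $|v_t|_{2+\delta}\ll t^{1/2}$, which again come from the induced-level Burkholder estimates you have not supplied.

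The nondegeneracy argument has a secondary gap of the same nature. From $\sigma^2=0$ you get an $L^2$ coboundary equation for $\Phi$ that holds only almost everywhere; upgrading it so that it can be evaluated along periodic orbits is a Liv\v{s}ic-regularity step that is not free for an unbounded, non-H\"older observable over a nonuniformly hyperbolic map, and your parenthetical appeal to ``the singular set contains no periodic orbit'' does not supply it. The paper sidesteps this: in Proposition~\ref{prop:deg} the transfer function is $\chi=\sum_{n\ge1}P^nV$, which is $d_{\theta_1}$-Lipschitz on the Gibbs--Markov base $Y$, so the identity $c\cdot V=g\circ F-g$ extends to all of $Y$ by continuity and can be evaluated at the dense set of periodic points of the full-branch Markov map $F$, producing the linear constraints $c\cdot\int_0^{H_q(y)}v(T_ty)\,dt=0$ that define the closed infinite-codimension subspace $S$ directly (rather than defining $S$ as the zero set of the variance and then arguing continuity of $v\mapsto\sigma^2(v)$). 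Your bump-function construction showing the periodic-orbit functionals are independent is fine, but it sits on top of the unproved regularity step.
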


Next, let $W\in C[0,1]$ denote Brownian motion with variance $\sigma^2$ and
define
$W_n(t)=n^{-1/2}v_{nt}$.  We obtain the weak invariance principle:
\begin{description}
\item[(WIP)] $W_n\to_w W$ in $C[0,1]$ as $n\to\infty$,
for $v\in C_0^\eta(\Lambda)$.
\end{description}

Also, we obtain the following moment estimate:
\begin{description}
\item[(Moments)]
For any $p\in(0,\infty)$ there exists $C>0$ such that
$|v_t|_p\le Ct^{1/2}\|v\|_\eta$
for all $v\in C^\eta_0(\Lambda)$, $t>0$.
\end{description}

\begin{cor} \label{cor:conv}
 $\lim_{t\to\infty}\int_\Lambda |t^{-1/2}v_t|^p\,d\mu = \E |Y|^p$
for all $p\in(0,\infty)$, where $Y$ is normally distributed with mean zero and variance $\sigma^2$.
\end{cor}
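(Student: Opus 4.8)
The plan is to derive Corollary~\ref{cor:conv} from the CLT and the (Moments) estimate by the standard principle that convergence in distribution together with uniform integrability implies convergence of moments. Fix $p\in(0,\infty)$. First I would apply (Moments) with the exponent $2p$ in place of $p$: this gives a constant $C>0$ with $|t^{-1/2}v_t|_{2p}\le C\|v\|_\eta$ for all $t\ge 1$, so that $\int_\Lambda |t^{-1/2}v_t|^{2p}\,d\mu\le (C\|v\|_\eta)^{2p}$ uniformly in $t\ge 1$. Hence the family $\{|t^{-1/2}v_t|^p : t\ge 1\}$ is bounded in $L^2(\mu)$, and being bounded in $L^r$ for some $r>1$ it is uniformly integrable (e.g.\ $\int_{\{Z>M\}} Z\,d\mu \le M^{-1}\int_\Lambda Z^2\,d\mu$ with $Z=|t^{-1/2}v_t|^p$, which tends to $0$ as $M\to\infty$ uniformly in $t$).

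Next, by the CLT we have $t^{-1/2}v_t\to_d Y$ as $t\to\infty$, where $Y\sim N(0,\sigma^2)$; since $x\mapsto |x|^p$ is continuous on $\R$, the continuous mapping theorem upgrades this to $|t^{-1/2}v_t|^p\to_d |Y|^p$. Finally, a uniformly integrable family that converges in distribution converges in mean (for instance via the Skorokhod representation theorem, which reduces matters to almost sure convergence, followed by Vitali's convergence theorem), and $\E|Y|^p<\infty$ because $Y$ is Gaussian; therefore $\int_\Lambda |t^{-1/2}v_t|^p\,d\mu = \E|t^{-1/2}v_t|^p \to \E|Y|^p$, which is the assertion.

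I do not expect any genuine obstacle here. The one point worth flagging is that one must invoke (Moments) at an exponent strictly larger than $p$ (here $2p$) in order to create the slack needed for uniform integrability of the $p$-th powers; the bare distributional limit from the CLT does not by itself control the moments. This is legitimate precisely because (Moments) is asserted for \emph{every} $p\in(0,\infty)$. (If instead the roof function only lay in $L^{p_0}$ for some finite $p_0$, so that (Moments) held only up to some finite exponent, the same argument would still yield the conclusion for all $p$ strictly below that threshold, but not at the endpoint.)
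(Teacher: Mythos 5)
Your argument is correct and is essentially the paper's proof: the paper simply cites \cite[Lemma~2.1(e)]{MTorok12}, which is precisely the standard statement that convergence in distribution plus uniform integrability (obtained, as you do, from the moment bound at a strictly larger exponent) yields convergence of moments. Your write-up just makes that cited lemma explicit, so there is nothing to add.
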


\begin{proof}
This is a standard consequence of the CLT and moment estimates, see~\cite[Lemma~2.1(e)]{MTorok12}.~
\end{proof}

Next we consider iterated versions of the WIP and moment estimates.
Let $v\in C_0^\eta(\Lambda,\R^d)$ and define
$W_n:\Lambda\to C([0,1],\R^d)$ as before.  Also, define
\[
\BBW_n:\Lambda\to C([0,1],\R^{d\times d}),\qquad
\BBW_n(t)=\int_0^t W_n\otimes d W_n.
\]
(This is a standard Riemann integral since $W_n$ is $C^1$ in $t$.)

\begin{thm}[Iterated WIP]  \label{thm:WIP}
The limits
\[
\Sigma=\lim_{n\to\infty}\int_\Lambda W_n(1) \otimes W_n(1) \,d\mu,
\qquad E=\lim_{n\to\infty}\int_\Lambda \BBW_n(1)\,d\mu,
\]
exist and $(W_n,\BBW_n)\to_w(W,\BBW)$ in $C([0,1],\R^d\times\R^{d\times d})$,
where $W$ is $d$-dimensional Brownian motion with covariance $\Sigma$
and 
\[
\BBW(t)=
\int_0^t W\otimes dW+Et=
\int_0^t W\otimes\circ dW+Dt, \qquad D=E-\frac12\Sigma.
\]
Moreover, there is a closed subspace $S\subset C_0^\eta(\Lambda,\R^d)$ of infinite codimension such that $\det\Sigma>0$ for all $v\in C_0^\eta(\Lambda,\R^d)\setminus S$.
\end{thm}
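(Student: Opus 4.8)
\noindent{\bf Proof plan.}\quad
The plan is to reduce the statement to the general results for suspension flows with unbounded roof function established in the body of the paper, using the Poincar\'e map supplied by~\cite{AraujoMsub}, and to obtain nondegeneracy from the scalar CLT above. First I would invoke~\cite{AraujoMsub} (together with~\cite{APPV09,LeplaideurYang17} for the SRB measure): there is a cross-section $X$ transverse to the flow, a Poincar\'e return map $F\colon X\to X$ with return-time (roof) function $h\colon X\to(0,\infty)$, and an $F$-invariant SRB measure $\mu_X$, such that $(X,F,\mu_X)$ is (a quotient of) a nonuniformly hyperbolic system modelled by a Young tower with exponential tails, $h\in L^p(\mu_X)$ for all $p<\infty$, and $\bar h=\int_X h\,d\mu_X<\infty$. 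Moreover $(T_t,\mu)$ on $\Lambda$ is, up to a $\mu$-null set, metrically isomorphic to the suspension semiflow on $X^h=\{(x,u):x\in X,\ 0\le u\le h(x)\}$ with $\mu$ corresponding to $(\mu_X\times\Leb)/\bar h$. A H\"older observable $v$ on $\Lambda$ pulls back to an observable on $X^h$ whose induced version $\phi(x)=\int_0^{h(x)}v(x,u)\,du$ on $X$ satisfies $|\phi|\le|v|_\infty h$, hence $\phi\in L^p(\mu_X)$ for all $p<\infty$, with enough regularity along unstable directions to meet the hypotheses used here. This places the problem within the class treated in the body of the paper.

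Next I would apply the iterated WIP for such suspension flows proved earlier. Its two hypotheses are met: the base map $F$ satisfies the iterated WIP, since this is known for nonuniformly hyperbolic maps modelled by Young towers with summable tails by~\cite{KM16,KM17}; and the roof function has enough moments, since $h\in L^p$ for all $p<\infty$. This produces the limits $\Sigma=\lim_n\int_\Lambda W_n(1)\otimes W_n(1)\,d\mu$ and $E=\lim_n\int_\Lambda\BBW_n(1)\,d\mu$ (both expressible through $\phi$, $h$ and correlation sums for $F$, with the $L^p$-control on $\phi$ giving absolute convergence), together with $(W_n,\BBW_n)\to_w(W,\BBW)$, where $W$ is $d$-dimensional Brownian motion with covariance $\Sigma$ and $\BBW(t)=\int_0^t W\otimes dW+Et$. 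The Stratonovich form, with $D=E-\tfrac12\Sigma$, follows from the It\^o--Stratonovich correction $\int_0^t W\otimes\circ dW=\int_0^t W\otimes dW+\tfrac12\Sigma t$.

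For nondegeneracy I would reduce to the scalar CLT. For $c\in\R^d\setminus\{0\}$ the observable $c\cdot v$ lies in $C_0^\eta(\Lambda)$ and $c^{\top}\Sigma c=\sigma^2(c\cdot v)$ is its scalar variance, so $\det\Sigma>0$ precisely when $\sigma^2(c\cdot v)>0$ for every such $c$; since $\sigma^2$ vanishes only on the closed infinite-codimension subspace furnished by the scalar statement, the required exceptional subspace $S$ is obtained by the same argument.

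The genuine difficulty is confined to the general suspension-flow theory invoked in the second paragraph: one must show that the contribution of the (possibly very long) final incomplete excursion is negligible in $C([0,1],\R^{d\times d})$ and in every $L^p$, and that the iterated correction $E$ is well-defined and stable despite $h$ being unbounded. This is precisely what the present paper handles cheaply using $h\in L^p$ for all $p<\infty$, in place of the phase-space exclusion scheme of~\cite{HM07}. Granting that machinery, the only point specific to singular hyperbolic attractors needing verification here is that~\cite{AraujoMsub} supplies a Poincar\'e map in the required class, which it does.
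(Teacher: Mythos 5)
Your overall strategy --- reduce via the Poincar\'e/inducing structure supplied by~\cite{AraujoMsub} to the suspension machinery in the body of the paper, then get nondegeneracy as in the scalar case --- is indeed the paper's strategy (Lemma~\ref{lem:SH} plus Theorem~\ref{thm:CLT_exp} and Proposition~\ref{prop:deg}). However, your reduction invokes a theorem the paper does not contain: there is no result of the form ``if the base map satisfies the iterated WIP for H\"older observables and the roof lies in $L^p$, then the suspension flow satisfies the iterated WIP''. The hypotheses of Theorem~\ref{thm:CLT_exp} are structural: one induces on $Y\subset X$ so that the quotient $\bF:\bY\to\bY$ is Gibbs--Markov, the induced roof $H=h_\tau$ (not $h$ itself) lies in $L^p$, and the regularity/contraction conditions \eqref{eq:inf_exp}--\eqref{eq:hY} hold, tying $H$, the flow and the separation time together. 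The object that must be controlled is not a H\"older observable of the base map but the induced observable $V(y)=\int_0^{H(y)}v(T_uy)\,du$, which is unbounded and whose only regularity comes through the flow; the paper constructs a martingale--coboundary decomposition for $V$ with bounds in $\|v\|_\eta$ (Proposition~\ref{prop:sinai}, Corollary~\ref{cor:m_exp}), using exponential contraction along stable leaves \emph{under the flow} \eqref{eq:Wsexp} rather than under $F$, and then passes from $Y$ to $Y^H$ to $\Lambda$ via the lap-number claims (a)--(c) and~\cite[Theorem~6.1]{KM16}. Your phrase ``with enough regularity along unstable directions to meet the hypotheses used here'' is exactly the statement requiring proof, not a consequence of $\phi\in L^p$.

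The second gap is your closing claim that ``the only point specific to singular hyperbolic attractors needing verification is that~\cite{AraujoMsub} supplies a Poincar\'e map in the required class, which it does''. This is not a quotable fact; it is Lemma~\ref{lem:SH}, the only part of the argument special to these attractors. Its proof needs (i) the tail estimate for the induced roof: $h$ has exponential tails on $X$ but $\tau$ has only superpolynomial tails on $Y$ (not exponential, contrary to your ``Young tower with exponential tails''), and these are combined via Proposition~\ref{prop:H} and Remark~\ref{rmk:beta}(c) to give $\mu_Y(H>t)=O(t^{-\beta})$ for all $\beta$; and (ii) the verification of~\eqref{eq:hY}, which occupies Steps 1--3 of Lemma~\ref{lem:SH} and uses a H\"older Hartman--Grobman linearization near the Lorenz-like equilibria to control the flow during long excursions, together with the expansion estimates and unstable cone field from~\cite{AraujoMsub}. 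Finally, on nondegeneracy: $c^T\Sigma c=\sigma^2(c\cdot v)$ is correct, but the set $\{v:\exists\, c\neq0 \text{ with } \sigma^2(c\cdot v)=0\}$ is not a priori contained in a closed subspace, so the formal reduction to the scalar CLT does not by itself furnish $S$; one must run the periodic-orbit constraint argument of Proposition~\ref{prop:deg} directly for vector-valued $v$, which is what ``the same argument'' has to mean.
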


\begin{rmk} \label{rmk:SigmaD}
The matrices $\Sigma,\,D\in \R^{d\times d}$ satisfy $\Sigma^T=\Sigma$
and $D^T=-D$.
If in addition the flow $T_t$ is mixing, then
\begin{align*}
\Sigma & \SMALL =\int_0^\infty \int_\Lambda\big(v\otimes(v\circ T_t)+(v\circ T_t)\otimes v\big)\,d\mu\, dt,
\\
D & \SMALL =\frac12\int_0^\infty \int_\Lambda\big(v\otimes(v\circ T_t)-(v\circ T_t)\otimes v\big)\,d\mu\, dt.
\end{align*}
\end{rmk}

Next, define
$S_t=\int_0^t \int_0^s (v\circ T_r)\otimes(v\circ T_s)\,dr\,ds$.
We obtain
\begin{description}
\item[(Iterated moments)]
For any $p\in(0,\infty)$ there exists $C>0$ such that
$|S_t|_p\le Ct\|v\|_\eta^2$
for all $v\in C^\eta_0(\Lambda,\R^d)$, $t>0$.
\end{description}

Finally, we mention the consequence of these results for deterministic homogenisation.
Consider a fast-slow system of the form
\[
\dot x  = a(x,y) + \eps^{-1}b(x,y) \qquad \dot y=\eps^{-2}g(y),
\]
with $x\in\R^d$ and $y\in\Lambda\subset\R^n$
where $\Lambda$ is a codimension two singular hyperbolic attractor for the ODE
$\dot y=g(y)$.  We suppose that $x(0)=\xi$ is fixed and that
$y(0)$ is chosen randomly from the probability space $(\Lambda,\mu)$.
Apart from the choice of initial condition $y(0)$, the system is completely deterministic.  

%

\begin{thm} Suppose that $a\in C^{1+,0}(\R^d\times\Lambda,\R^d)$ and $b\in C_0^{2+,\eta}(\R^d\times\Lambda,\R^d)$ for some $\eta\in(0,1]$.
For $v,w\in C_0^\eta(\Lambda)$, let $\fB(v,w)=\lim_{n\to\infty}n^{-1}\int_\Lambda \int_0^n\int_0^s v\circ T_r\,w\circ T_s\,dr\,ds\,d\mu$.
Define $\tilde a:\R^d\to\R^d$ and $\sigma:\R^d\to\R^{d\times d}$,
\begin{align*}
& \tilde a^i(x)  =\int_\Lambda a^i(x,y)\,d\mu(y)+\sum_{k=1}^d\fB(b^k(x,\cdot),\partial_kb^i(x,\cdot)), \\
& (\sigma(x)\sigma(x)^T)^{ij}=\fB(b^i(x,\cdot),b^j(x,\cdot))
+\fB(b^j(x,\cdot),b^i(x,\cdot)),
\end{align*}
for $i,j=1,\dots,d$.

Then $x_\eps\to_w X$ in $C([0,1],\R^d)$ where
$X$ is the unique solution of the SDE
\[
dX=\tilde a(X)\,dt+\sigma(X)\, dW, \quad X(0)=\xi.
\]
\end{thm}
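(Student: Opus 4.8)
The plan is to deduce the theorem from the iterated WIP (Theorem~\ref{thm:WIP}) and the iterated moment estimate of this paper, together with the homogenisation machinery of Kelly--Melbourne~\cite{KM16,KM17}: the statistical inputs that machinery requires are exactly the ones established above, so the task is to verify its hypotheses and unwind the conclusion. Concretely I would (i) rescale time so the fast variable becomes the flow $T_t$ run at speed $\eps^{-2}$; (ii) recast the slow equation as a smooth (Wong--Zakai type) approximation of a rough differential equation driven by the canonical lift $\mathbf W_\eps=(W_\eps,\BBW_\eps)$ of the noise generated by $b$; (iii) promote the $C^0$ weak convergence of Theorem~\ref{thm:WIP} to weak convergence of $\mathbf W_\eps$ in a rough-path topology, using the moment bounds; and (iv) apply continuity of the It\^o--Lyons solution map and rewrite the limiting RDE as the stated It\^o SDE.

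For (i)--(ii): writing $n=\eps^{-2}$ and $y_\eps(t)=T_{nt}y_0$, the slow equation reads $x_\eps(t)=\xi+\int_0^t a(x_\eps,y_\eps)\,ds+\eps^{-1}\int_0^t b(x_\eps,y_\eps)\,ds$. With $x$ frozen, the rescaled noise path $\eps^{-1}\int_0^{\cdot}b(x,y_\eps(s))\,ds$ and its iterated integral coincide with $W_n$ and $\BBW_n$ of Theorem~\ref{thm:WIP} for $v=b(x,\cdot)\in C_0^\eta(\Lambda,\R^d)$ ($b\in C_0^{2+,\eta}$ gives $\int_\Lambda b(x,\cdot)\,d\mu=0$ for every $x$, so no centring is needed). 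Since $W_\eps$ is $C^1$ in $t$, $\mathbf W_\eps$ is the smooth, hence geometric, rough path with $\BBW_\eps(t)=\int_0^tW_\eps\otimes dW_\eps$; and the $O(1)$ term $a(x_\eps,y_\eps)$ carries no CLT-scale fluctuation, so by the ergodic theorem it contributes only its average $\int_\Lambda a(x,\cdot)\,d\mu$ to the limiting drift, where $a\in C^{1+,0}$ provides the needed $x$-regularity.

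For (iii): by Theorem~\ref{thm:WIP}, $(W_n,\BBW_n)\to_w(W,\BBW)$ in $C([0,1],\R^d\times\R^{d\times d})$ and $(W,\BBW)$ is geometric with $\BBW(t)=\int_0^tW\otimes\circ dW+Dt$. To feed this into the solution map one needs convergence in the $p$-variation (or $\alpha$-H\"older) rough-path topology for some $p\in(2,3)$; this follows from the $C^0$ weak convergence together with a Kolmogorov-type tightness criterion for rough paths, for which the moment bounds $|v_t|_p\le Ct^{1/2}\|v\|_\eta$ and $|S_t|_p\le Ct\|v\|_\eta^2$ --- available for every $p<\infty$ since the roof function of a codimension two singular hyperbolic attractor lies in all $L^p$ --- are precisely what is needed (note $\BBW_n(1)=n^{-1}S_n$, so these give increment estimates uniform in $n$). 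The uniformity in the frozen parameter $x$ is inherited from the $x$-regularity of $b$.

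For (iv) and the conclusion: continuity of the It\^o--Lyons map, as packaged for fast--slow systems in~\cite{KM16,KM17}, yields $x_\eps\to_w X$ with $X$ solving the RDE driven by $(W,\BBW)$; rewriting this geometric RDE as an It\^o SDE, the Stratonovich correction together with the area drift $Dt$ and the $x$-dependence of the diffusion field $x\mapsto b(x,\cdot)$ produce the drift term $\sum_k\fB(b^k(X,\cdot),\partial_k b^i(X,\cdot))$ (using $E^{ij}=\fB(b^i(x,\cdot),b^j(x,\cdot))$ and $D=E-\tfrac12\Sigma$), while $\Sigma=\lim_n\int_\Lambda W_n(1)\otimes W_n(1)\,d\mu$ equals $\sigma\sigma^T$ and gives the diffusion; uniqueness in law of $X$ uses that $\tilde a$ is Lipschitz and $\sigma\sigma^T$ sufficiently regular, which follows from $a\in C^{1+,0}$, $b\in C_0^{2+,\eta}$ and the estimate $|\fB(v,w)|\le C\|v\|_\eta\|w\|_\eta$. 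I expect the main obstacle to be (iii)--(iv): matching the hypotheses of the Kelly--Melbourne theorems, i.e.\ upgrading the iterated WIP to rough-path convergence with enough uniformity in $x$ and doing the (largely bookkeeping) regularity estimates on $a$, $b$ and on $\fB$ as a function of $x$; the genuinely dynamical content --- the iterated WIP and the iterated moment bounds --- is already in hand.
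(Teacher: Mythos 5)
Your proposal is correct and takes essentially the same route as the paper: verify the two hypotheses of the Kelly--Melbourne homogenisation theorem (\cite[Assumptions~2.1 and~2.2]{KM17}) via the iterated WIP of Theorem~\ref{thm:WIP} and the iterated moment bounds $|v_t|_p\ll t^{1/2}$, $|S_t|_p\ll t$, and then invoke \cite[Theorem~2.3]{KM17}. The rough-path steps (iii)--(iv) that you flag as the main obstacle are internal to that cited theorem and need not be redone; the paper simply checks the two assumptions (identifying $E^{ij}=\fB(v^i,v^j)$) and applies it.
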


\begin{proof}
We apply~\cite[Theorem~2.3]{KM17}.  
Define the bilinear operator
\[
\fB:C_0^\eta(\Lambda)\times C_0^\eta(\Lambda)\to\R,
\qquad
\fB(u,u')=\lim_{n\to\infty}\int_\Lambda \int_0^1 W_{u,n}\otimes dW_{u',n}\,d\mu,
\]
where $W_{u,n}=n^{-1/2}u_{nt}$.
By Theorem~\ref{thm:WIP}, for every $m\ge1$ and $v\in C_0^\eta(\Lambda,\R^m)$,
we have that
$(W_n,\BBW_n)\to_w (W,\BBW)$ in
$C([0,1],\R^m\times\R^{m\times m})$, and hence in the sense of finite-dimensional distributions, where $W$ is a Brownian motion and
$\BBW(t)=\int_0^t W\otimes dW+Et$.
Moreover,
\[
E^{ij}=\lim_{n\to\infty}\frac1n\int_\Lambda \BBW_n(1)^{ij}\,d\mu
=\lim_{n\to\infty}\frac1n\int_\Lambda \int_0^1 W_n^i\,dW_n^j\,d\mu
=\fB(v^i,v^j),
\]
for $i,j=1,\dots,m$.
This is Assumption~2.1 in~\cite{KM17}.
Also, 
Assumption~2.2 in~\cite{KM17} follows from the moment estimates 
$|v_t|_p\ll t^{1/2}$ and $|S_t|_p\ll t$ which hold for all $p$
and all $v\in C_0^\eta(\Lambda,\R^d)$.
Hence, the result follows from~\cite[Theorem~2.3]{KM17}.
\end{proof}

\begin{rmk}
The special case $a(x,y)=a(x)$ and $b(x,y)=b(x)v(y)$ 
with $a\in C^{1+}(\R^d,\R^d)$, $b\in C^{2+}(\R^d,\R^{d\times e})$,
$v\in C_0^\eta(\Lambda,\R^e)$, is considered 
in~\cite{KM16}.  For such fast-slow systems, the limiting SDE takes the form
\[
dX=\Big\{a(X)+\sum_{\alpha,\beta,\gamma=1}^d D^{\beta\gamma}\partial_\alpha b^\beta(X)b^{\alpha\gamma}(X)\Big\}dt+b(X)\circ dW, \quad X(0)=\xi,
\]
where $D$ is as in Theorem~\ref{thm:WIP}.
(Here $b^\beta(X)$ is the $\beta$'th column of $b(X)$.)
\end{rmk}

\vspace{1ex}
The remainder of this paper is organised as follows.
In Section~\ref{sec:NUE}, we prove the iterated WIP and moment estimates
for nonuniformly expanding systems with unbounded roof function.
In Section~\ref{sec:rearrange}, we use a rearrangement idea to obtain sharper moment estimates.
Sections~\ref{sec:exp} and~\ref{sec:prod} contain the iterated WIP and moment estimates for two classes of nonuniformly hyperbolic flows; one where there is exponential contraction along stable leaves for the flow, and one where there is a horseshoe structure as in~\cite{Young98}.
Finally, in Section~\ref{sec:Lorenz} we verify the hypotheses in Section~\ref{sec:exp} for singular hyperbolic attractors.  As a consequence of this, we obtain all of the results described in the introduction.

We end this introduction with some basic material on suspensions and inducing.

\subsection{Preliminaries on suspensions and inducing}
\label{sec:prelim}

\paragraph{Suspensions}
Let $T_t:M\to M$ be a semiflow defined on a measurable space $M$.
(We assume that $(x,t)\mapsto T_tx$ is measurable from $M\times[0,\infty)\to M$.)
Let $X\subset M$ be a measurable subset with probability measure $\mu_X$.
Suppose that $f:X\to X$ is a measure-preserving transformation and $h:X\to\R^+$ is
an integrable function such that
$T_{h(x)}x=fx$ for all $x\in X$.

Define the suspension
$X^h=\{(x,u)\in X\times[0,\infty): u\in[0,h(x)]\}/\sim$ where
$(x,h(x))\sim(fx,0)$.   The suspension
semiflow $f_t:X^h\to X^h$ is given by $f_t(x,u)=(x,u+t)$
computed modulo identifications.
We obtain an $f_t$-invariant probability measure on
$X^h$ given by $\mu^h=(\mu_X\times{\rm Lebesgue})/\bar h$ where $\bar h=\int_X h\,d\mu_X$.

Define $\pi:X^h\to M$ by $\pi(x,u)=T_ux$.  This
defines a measurable semiconjugacy from $f_t$ to $T_t$ and $\mu_M=\pi_*\mu^h$ is a $T_t$-invariant probability measure on $M$.

\paragraph{Inducing}
Let $f:X\to X$ be a measurable map defined on a measurable space $X$.
Let $Y\subset X$ be a measurable subset with probability measure $\mu_Y$.
Suppose that $F:Y\to Y$ is a measure-preserving transformation and $\tau:Y\to\Z^+$ is
an integrable function such that
$f^{\tau(y)}y=Fy$ for all $y\in Y$.

Define the tower
$Y^\tau=\{(y,\ell )\in Y\times\Z: 0\le\ell\le\tau(y)-1\}$.
The tower map
$\hat f:Y^\tau\to Y^\tau$ is given by
$\hat f(y,\ell)=\begin{cases} (y,\ell+1) & \ell\le\tau(y)-2
\\ (Fy,0) & \ell=\tau(y)-1 \end{cases}$.
We obtain an $\hat f$-invariant probability measure on
$Y^\tau$ given by $\mu^\tau=(\mu_Y\times{\rm counting})/\bar\tau$ where $\bar\tau=\int_Y \tau\,d\mu_Y$.

Define $\pi:Y^\tau\to X$ by $\pi(y,\ell)=f^\ell y$.  This
defines a measurable semiconjugacy from $\hat f$ to $f$ and $\mu_X=\pi_*\mu^\tau$ is an $f$-invariant probability measure on $X$.

\paragraph{Induced roof functions}
Often these situations are combined, so we have
$T_t:M\to M$, $f:X\to X$ and $F:Y\to Y$ together with
$h:X\to\R^+$ and $\tau:Y\to\Z^+$ such that
$T_h=f$ and $f^\tau=F$.

If $\mu_Y$ is an $F$-invariant probability measure on $Y$ and $\tau$ is integrable, then we can define $\mu_X$ on $X$ as above.  If in addition $h$ is integrable, then we can define $\mu_M$ on $M$ as above.
It is convenient to define the {\em induced roof function}
\begin{align} \label{eq:H}
\SMALL H:Y\to \R^+, \qquad H(y)=\sum_{\ell=0}^{\tau(y)-1}h(f^\ell y).
\end{align}
Then $H$ is integrable (indeed $\int_Y H\,d\mu_Y=\bar h\bar\tau$) and we define the suspension semiflow $F_t:Y^H\to Y^H$ with invariant probability measure $\mu^H=(\mu_Y\times{\rm Lebesgue})/\bH$ where $\bH=\int_Y H\,d\mu_Y$.
Define the
semiconjugacy $\pi:Y^H\to M$ given by $\pi(y,u)=T_uy$.  It is easily checked that $\pi_*\mu^H$ is again $\mu_M$.

\paragraph{More notation}
We use ``big O'' and $\ll$ notation interchangeably.  For example, $a_n=O(b_n)$ or $a_n\ll b_n$ means that
there is a constant $C>0$ such that
$a_n\le Cb_n$ for all $n\ge1$.

\section{Nonuniformly expanding semiflows}
\label{sec:NUE}

Let $(M,d)$ be a bounded metric space with Borel subsets $Y\subset X\subset M$.  Suppose that $T_t:M\to M$, $h:X\to\R^+$, $f=T_h:X\to X$ are as
in Subsection~\ref{sec:prelim}.
Also, let $\tau:Y\to\Z^+$ and $F=f^\tau:Y\to Y$ be as
in Subsection~\ref{sec:prelim}.

Now let $\mu_0$ be a finite Borel measure on $X$ with $\mu_0(Y)>0$.
We suppose that $f$ is a nonsingular transformation for which $\mu_0$ is ergodic, and that $\tau\in L^1(Y)$.
Moreover, we suppose that
there is an at most countable measurable partition $\{Y_j\}$ such that
$\tau$ is constant on partition elements,
and there exist constants $\lambda>1$, $C\ge1$, $\eta\in(0,1]$ such that for all $j\ge1$,
\begin{itemize}
\item[(1)] $F$ restricts to a (measure-theoretic) bijection from $Y_j$  onto $Y$.
\item[(2)] $d(Fy,Fy')\ge \lambda d(y,y')$ for all $y,y'\in Y_j$.
\item[(3)] $\zeta_0=\frac{d\mu_0|_Y}{d\mu_0|_Y\circ F}$
satisfies $|\log \zeta_0(y)-\log \zeta_0(y')|\le Cd(Fy,Fy')^\eta$ for all $y,y'\in Y_j$.
\end{itemize}

There is a unique ergodic $F$-invariant
absolutely continuous probability measure $\mu_Y$ on $Y$.   Moreover, $d\mu_Y/d\mu_0\in L^\infty(Y)$ and
$F:Y\to Y$ is a (full branch) Gibbs-Markov map.
Standard references for material on Gibbs-Markov maps are~\cite[Chapter~4]{Aaronson} and~\cite{AaronsonDenker01}.

Starting from $\mu_Y$ and using that $\tau$ is integrable, we can construct an ergodic $f$-invariant probability measure $\mu_X$ on $X$
as in Section~\ref{sec:prelim}.  We assume that $\int_X h\,d\mu_X$ is integrable, leading to an ergodic $T_t$-invariant probability measure $\mu_M$ on $M$.

For $\theta\in(0,1)$, define the symbolic metric $d_\theta(y,y')=\theta^{s(y,y')}$ where the {\em separation time} $s(y,y')$ is the least integer $n\ge0$ such that $F^ny$ and $F^ny'$ lie in distinct partition elements $Y_j$.
A function $v:Y\to\R$ is {\em $d_\theta$-Lipschitz} if
$|v|_\theta=\sup_{y\neq y'}|v(y)-v(y')|/d_\theta(y,y')$ is finite.
Let $F_\theta(Y)$ be the Banach space of Lipschitz functions with norm $\|v\|_\theta=|v|_\infty+|v|_\theta$.
More generally (and with a slight abuse of notation), we say that a function
$v:Y\to\R$ is {\em piecewise Lipschitz} if
$|1_{Y_j}v|_\theta=\sup_{y,y'\in Y_j,\,y\neq y'}|v(y)-v(y')|/d_\theta(y,y')$
is finite for all $j$.

Let $P:L^1(Y)\to L^1(Y)$ denote the transfer operator corresponding to
$F:Y\to Y$ (so
$\int_Y Pv\,w\,d\mu_Y=\int_Y v\,w\circ F\,d\mu_Y$ for
all $v\in L^1(Y)$, $w\in L^\infty(Y)$).

\begin{prop} \label{prop:GM}
There exists $\theta,\,\gamma\in(0,1)$, $C>0$, such that
\begin{itemize}
\item[(a)]
$\|P^n v\|_\theta\le C\gamma^n\|v\|_\theta$
for all $v\in F_\theta(Y)$ with $\int_Y v\,d\mu_Y=0$ and all $n\ge1$.
\item[(b)] If $g\in L^1(Y)$ such that $|1_{Y_j}v|_\infty \le \inf_{Y_j}g$ and $|1_{Y_j}v|_\theta\le \inf_{Y_j}g$ for all $j\ge1$,  then $\|Pv\|_\theta\le C|g|_1$.
\end{itemize}
\end{prop}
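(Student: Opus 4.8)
The plan is to derive both parts from the explicit inverse-branch formula for the transfer operator of the full-branch Gibbs--Markov map $F$, together with the bounded distortion supplied by hypothesis~(3); the point of (b), as opposed to (a), is that it controls $Pv$ when $v$ is only piecewise Lipschitz, so that applying $P$ once brings us back into $F_\theta(Y)$. First I would fix notation: for each $j$ let $v_j=(F|_{Y_j})^{-1}:Y\to Y_j$ be the inverse branch and $\zeta_j:Y\to\R^+$ the associated weight for the transfer operator with respect to $\mu_Y$, so that $Pv(y)=\sum_j\zeta_j(y)\,v(v_jy)$ and $\sum_j\zeta_j\equiv 1$ (as $P$ fixes the density $1$). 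Hypothesis~(2), iterated along cylinders, gives $d(y,y')\le(\diam Y)\,\lambda^{-s(y,y')}$ for $y,y'\in Y$, so fixing once and for all $\theta\in(0,1)$ with $\theta\ge\lambda^{-\eta}$, hypothesis~(3) (which passes from $\mu_0$ to $\mu_Y$ since $d\mu_Y/d\mu_0$ is bounded above and below and piecewise Lipschitz) becomes the symbolic distortion estimate $|\log\zeta_j(y)-\log\zeta_j(y')|\le C\,d_\theta(y,y')$ on $Y$. Integrating $P1_{Y_j}=\zeta_j$ gives $\int_Y\zeta_j\,d\mu_Y=\mu_Y(Y_j)$, and combined with distortion this yields the key bound $\sup_Y\zeta_j\le C\,\mu_Y(Y_j)$.

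Part~(a) is then the standard Gibbs--Markov spectral gap, and one may simply cite~\cite[Chapter~4]{Aaronson} or~\cite{AaronsonDenker01}; to sketch it, the splitting of $Pv(y)-Pv(y')$ used below, together with $\sup_Y\zeta_j\le C\mu_Y(Y_j)$ and $|v(v_jy')|\le\mu_Y(Y_j)^{-1}\int_{Y_j}|v|\,d\mu_Y+\theta\,|1_{Y_j}v|_\theta$, produces a Lasota--Yorke inequality $\|P^nv\|_\theta\le C(\alpha^n\|v\|_\theta+|v|_1)$ with $\alpha<1$ (the expansion $\lambda>1$ contracts the Lipschitz part, the rest is absorbed into $|v|_1$). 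Since $P$ is a weak contraction on $L^1(\mu_Y)$, Hennion's theorem gives quasicompactness on $F_\theta(Y)$; exactness (in particular mixing) of a full-branch Gibbs--Markov system forces $1$ to be the only unimodular eigenvalue and to be simple, with eigenprojection $\Pi v=\int_Y v\,d\mu_Y$. Writing $P=\Pi+N$ with $\Pi N=N\Pi=0$ and spectral radius of $N$ below $1$ gives $\|N^nv\|_\theta\le C\gamma^n\|v\|_\theta$, and $P^nv=N^nv$ for mean-zero $v$, which is (a).

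For part~(b) I would estimate $|Pv|_\infty$ and $|Pv|_\theta$ directly. For the sup norm, $|Pv(y)|\le\sum_j\zeta_j(y)\,|1_{Y_j}v|_\infty\le\sum_j(\sup_Y\zeta_j)\inf_{Y_j}g\le C\sum_j\mu_Y(Y_j)\inf_{Y_j}g$; on each $Y_j$ meeting the support of $v$ the hypothesis forces $\inf_{Y_j}g\ge0$, so $\mu_Y(Y_j)\inf_{Y_j}g\le\int_{Y_j}|g|\,d\mu_Y$ and $|Pv|_\infty\le C|g|_1$. For the Hölder seminorm, split
\[
Pv(y)-Pv(y')=\sum_j\zeta_j(y)\big(v(v_jy)-v(v_jy')\big)+\sum_j\big(\zeta_j(y)-\zeta_j(y')\big)v(v_jy').
\]
In the first sum $d_\theta(v_jy,v_jy')=\theta\,d_\theta(y,y')$ (separation time increases by one), so it is at most $\theta\,d_\theta(y,y')\sum_j\zeta_j(y)\,|1_{Y_j}v|_\theta\le C\,d_\theta(y,y')\,|g|_1$ exactly as for the sup norm; in the second, distortion gives $|\zeta_j(y)-\zeta_j(y')|\le C(\sup_Y\zeta_j)\,d_\theta(y,y')$, so it is at most $C\,d_\theta(y,y')\sum_j(\sup_Y\zeta_j)\,|1_{Y_j}v|_\infty\le C\,d_\theta(y,y')\,|g|_1$. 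Adding the two bounds gives $\|Pv\|_\theta\le C|g|_1$.

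The only genuine obstacle is bookkeeping: choosing a single $\theta$ that serves both parts, checking that hypothesis~(3) carries over to the invariant density, and observing that the summability $\sum_j\mu_Y(Y_j)\inf_{Y_j}g\le|g|_1$ is legitimate precisely because $|1_{Y_j}v|_\infty\le\inf_{Y_j}g$ is vacuous unless $g\ge0$ on $Y_j$. Everything else is the routine Gibbs--Markov toolkit.
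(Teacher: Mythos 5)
Your proposal is correct and is essentially the paper's approach: the paper simply cites the standard Gibbs--Markov references for (a) (Aaronson, Aaronson--Denker) and the standard argument for (b) (as in \cite[Lemma~2.2]{MN05}), and your write-up is exactly that standard argument (inverse-branch formula, bounded distortion giving $\sup_Y\zeta_j\le C\mu_Y(Y_j)$, and the two-term splitting of $Pv(y)-Pv(y')$) carried out in detail.
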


\begin{proof}
Part (a) is proved for instance in~\cite{Aaronson, AaronsonDenker01}.
Part (b) is
 a standard argument (eg.~\cite[Lemma~2.2]{MN05}).
\end{proof}

Define the induced roof function $H:Y\to\R^+$ as in~\eqref{eq:H}.
We assume that
$H$ is piecewise $d_\theta$-Lipschitz for some $\theta\in(0,1)$.
Moreover, we suppose that there is a constant $C\ge1$ such that
\begin{align} \label{eq:inf}
|1_{Y_j}H|_\theta\le C\,{\SMALL\inf_{Y_j}}H \quad\text{for $j\ge1$,}
\end{align}
and
\begin{align} \label{eq:holder}
d(T_ty,T_ty')\le C\theta^{s(y,y')}
\;\text{for $y,y'\in Y_j$, $j\ge1$,
$t\in[0,H(y)]\cap [0,H(y')]$.}
\end{align}

\subsection{Martingale-coboundary decomposition}

Let $v:M\to\R^d$ and define the induced observable
\[
\SMALL V:Y\to\R^d, \qquad V(y)=\int_0^{H(y)}v(T_uy)\,du.
\]

\begin{prop} \label{prop:VY}
Let $\theta_1=\theta^{\eta}$.
There is a constant $C>0$ such that
\[
|1_{Y_j}V|_\infty\le C|v|_\infty{\SMALL\inf}_{Y_j} H, \qquad
|1_{Y_j}V|_{\theta_1}\le C\|v\|_\eta\, {\SMALL\inf}_{Y_j}H
\quad\text{for $j\ge1$,}
\]
for all $v\in C^\eta(M,\R^d)$.
\end{prop}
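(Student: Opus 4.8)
The plan is to estimate the two quantities $|1_{Y_j}V|_\infty$ and $|1_{Y_j}V|_{\theta_1}$ separately, using the definition $V(y)=\int_0^{H(y)}v(T_uy)\,du$ together with the two standing hypotheses~\eqref{eq:inf} and~\eqref{eq:holder}.

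For the sup bound, I would simply note that for $y\in Y_j$,
\[
|V(y)|\le\int_0^{H(y)}|v(T_uy)|\,du\le |v|_\infty H(y)\le C|v|_\infty\,{\textstyle\inf_{Y_j}}H,
\]
where the last step uses $H(y)\le \inf_{Y_j}H + |1_{Y_j}H|_\theta \le (1+C)\inf_{Y_j}H$ by~\eqref{eq:inf} (and one should observe $\inf_{Y_j}H>0$ since $H\ge h>0$). This immediately gives the first inequality.

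For the Lipschitz bound, fix $y,y'\in Y_j$ with $y\neq y'$ and, without loss of generality, $H(y)\le H(y')$. I would split
\[
V(y)-V(y')=\int_0^{H(y)}\big(v(T_uy)-v(T_uy')\big)\,du-\int_{H(y)}^{H(y')}v(T_uy')\,du.
\]
The second integral is bounded by $|v|_\infty\,|H(y)-H(y')|\le |v|_\infty\,|1_{Y_j}H|_\theta\,\theta^{s(y,y')}\le C|v|_\infty(\inf_{Y_j}H)\,d_\theta(y,y')$, using~\eqref{eq:inf}. For the first integral, $|v(T_uy)-v(T_uy')|\le |v|_\eta\,d(T_uy,T_uy')^\eta\le |v|_\eta\,(C\theta^{s(y,y')})^\eta = C^\eta|v|_\eta\,d_{\theta_1}(y,y')$ for $u\in[0,H(y)]$ by~\eqref{eq:holder} and the definition $\theta_1=\theta^\eta$; integrating over $u\in[0,H(y)]$ contributes a factor $H(y)\le(1+C)\inf_{Y_j}H$. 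Noting that $d_\theta(y,y')=\theta^{s(y,y')}\le \theta_1^{s(y,y')}=d_{\theta_1}(y,y')$ since $\theta\le\theta_1$ (here $\theta,\eta\in(0,1]$ so $\theta^\eta\ge\theta$), both pieces are bounded by a constant times $\|v\|_\eta\,(\inf_{Y_j}H)\,d_{\theta_1}(y,y')$, which is the claimed estimate. I would then absorb all the resulting constants ($C$, $C^\eta$, $1+C$, and the distortion constant hidden in the bounded-metric-space normalization) into a single $C>0$ depending only on the data of Section~\ref{sec:NUE}.

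The only mildly delicate point is the interplay of the two metrics: one must be careful that~\eqref{eq:holder} is phrased in terms of $\theta^{s(y,y')}$ while the target norm $|\cdot|_{\theta_1}$ uses $\theta_1^{s(y,y')}$, and that raising the Hölder-$\eta$ modulus of $v$ to the power $\eta$ is exactly what converts $\theta$ into $\theta_1$. I do not expect any real obstacle here — this is a routine verification — but that metric bookkeeping, together with checking that $\inf_{Y_j}H$ is strictly positive so that~\eqref{eq:inf} can be used in the direction $H(y)\lesssim \inf_{Y_j}H$, is where care is needed.
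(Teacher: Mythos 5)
Your proposal is correct and follows essentially the same route as the paper: bound $|1_{Y_j}H|_\infty\le(1+C)\inf_{Y_j}H$ via~\eqref{eq:inf} for the sup estimate, then split $V(y)-V(y')$ into the tail integral over $[H(y)\wedge H(y'),H(y)\vee H(y')]$ (controlled by~\eqref{eq:inf}) and the difference integral (controlled by~\eqref{eq:holder}, with the H\"older exponent $\eta$ converting $\theta^{s(y,y')}$ into $d_{\theta_1}(y,y')$, and $d_\theta\le d_{\theta_1}$ handling the other term). No gaps; the metric bookkeeping you flag is exactly what the paper does.
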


\begin{proof}
By~\eqref{eq:inf}, $|1_{Y_j}V|_\infty \le |v|_\infty|1_{Y_j}H|_\infty\ll |v|_\infty\inf_{Y_j}H$.
Let $y,y'\in Y_j$ and suppose without loss that $H(y)\ge H(y')$.  By~\eqref{eq:holder},
$|v(T_uy)-v(T_uy')|\le |v|_\eta\, d(T_uy,T_uy')^\eta
\le C |v|_\eta\,  \theta^{\eta s(y,y')}=C |v|_\eta\, d_{\theta_1}(y,y')$ for $u\in[0,H(y')]$.  Hence by~\eqref{eq:inf},
\begin{align*}
|V(y)- & V(y')|
 \le \int_{H(y')}^{H(y)}|v(T_uy)|\,du+\int_0^{H(y')}|v(T_uy)-v(T_uy')|\,du \\
& \le |v|_\infty(H(y)-H(y'))+C|v|_\eta\, H(y') d_{\theta_1}(y,y')
\ll \|v\|_\eta \,{\SMALL\inf_{Y_j}}H d_{\theta_1}(y,y'),
\end{align*}
as required.
\end{proof}

Let $U$ be the Koopman operator $Uv=v\circ F$.
It is standard that $PU=I$ and $UP=\E(\,\cdot\,|F^{-1}\cB)$
where $\cB$ denotes the underlying $\sigma$-algebra on $Y$.

\begin{prop} \label{prop:m}
Suppose that $H\in L^p(Y)$ where $p\ge 1$.
Let $v\in C^\eta_0(M,\R^d)$.
Then there exists
$m\in L^p(Y,\R^d)$, $\chi\in L^\infty(Y,\R^d)$ such that
$V=m+\chi\circ F-\chi$, $m\in\ker P$.

Moreover, there exists a constant $C>0$ such that
$|m|_p\le C\|v\|_\eta$ and
$|\chi|_\infty\le C\|v\|_\eta$ for all
$v\in C^\eta_0(M,\R^d)$.
\end{prop}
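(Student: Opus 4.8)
The plan is to construct the martingale--coboundary decomposition via the standard Gordin--Liverani device applied to the Gibbs--Markov map $F$, using the transfer operator $P$. Set $\chi=\sum_{n\ge1}P^nV$ and $m=V+\chi\circ F-\chi$; then formally $Pm=PV+UP\chi\circ\text{(shift)}\dots$ — more precisely one checks $Pm=PV+P(\chi\circ F)-P\chi=PV+\chi-P\chi=PV+\sum_{n\ge1}P^nV-\sum_{n\ge2}P^nV=PV+PV-PV=0$ wait, let me reorganize: since $P(\chi\circ F)=\chi$ (as $PU=I$) and $P\chi=\sum_{n\ge1}P^{n+1}V=\sum_{n\ge2}P^nV$, we get $Pm=PV+\chi-\sum_{n\ge2}P^nV=PV+P^1V=2PV$? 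The bookkeeping must be done carefully: the correct choice is $\chi=\sum_{n\ge0}P^{n}(PV)=\sum_{n\ge1}P^nV$, giving $Pm=PV-(\chi-P\chi)\cdot(-1)$, i.e. one verifies directly that $m\in\ker P$ once the series is shown to converge. So the first step is to make this series converge in the right spaces.

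The key point is that $V$ is not in $F_\theta(Y)$ (it is only piecewise Lipschitz, with local Lipschitz constants controlled by $\inf_{Y_j}H$ via Proposition~\ref{prop:VY}), so Proposition~\ref{prop:GM}(a) does not apply to $V$ directly. Instead I would apply Proposition~\ref{prop:GM}(b) with $g$ a multiple of $H$: by Proposition~\ref{prop:VY}, $|1_{Y_j}V|_\infty$ and $|1_{Y_j}V|_{\theta_1}$ are both $\ll\|v\|_\eta\inf_{Y_j}H$, so taking $g=C\|v\|_\eta H$ and noting $\int_Y V\,d\mu_Y=\int_{Y^H} v\circ\pi\,d\mu^H\cdot\bar H=0$ (since $v\in C^\eta_0$), Proposition~\ref{prop:GM}(b) gives $\|PV\|_{\theta_1}\le C|g|_1\ll\|v\|_\eta|H|_1<\infty$ (recall $\theta_1=\theta^\eta$; the hypothesis $H\in L^p$ with $p\ge1$ gives $H\in L^1$). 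Now $PV\in F_{\theta_1}(Y)$ has zero mean, so Proposition~\ref{prop:GM}(a) (applied with $\theta_1$ in place of $\theta$) yields $\|P^n V\|_{\theta_1}=\|P^{n-1}(PV)\|_{\theta_1}\le C\gamma^{n-1}\|PV\|_{\theta_1}$ for $n\ge1$. Hence $\chi=\sum_{n\ge1}P^nV$ converges in $F_{\theta_1}(Y)\subset L^\infty$ with $|\chi|_\infty\le\|\chi\|_{\theta_1}\le C\|v\|_\eta$, which is the stated bound on $\chi$.

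It then remains to check $m\in\ker P$ and the $L^p$ bound on $m$. For the kernel property: $Pm=PV+P(\chi\circ F)-P\chi=PV+\chi-P\chi=PV+P^1V+\sum_{n\ge2}P^nV-\sum_{n\ge2}P^nV$; being careful, $\chi-P\chi=\sum_{n\ge1}P^nV-\sum_{n\ge1}P^{n+1}V=P^1V=PV$ by telescoping (valid since the tail $\|P^NV\|_{\theta_1}\to0$), so $Pm=PV+(-PV)$? No: $m=V-(\chi-\chi\circ F)$, so $Pm=PV-P\chi+P(\chi\circ F)=PV-P\chi+\chi=PV+(\chi-P\chi)=PV+PV$ — the sign is wrong unless $\chi=-\sum_{n\ge1}P^nV$. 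With the correct sign convention $\chi=-\sum_{n\ge1}P^nV$ one gets $Pm=PV+(\chi-P\chi)=PV+(-PV)=0$; so I will simply fix the sign of $\chi$ so that $V=m+\chi\circ F-\chi$ with $Pm=0$, which does not affect the $L^\infty$ bound. For the $L^p$ bound: $m=V+\chi\circ F-\chi$, and $|\chi\circ F-\chi|_\infty\le2|\chi|_\infty\ll\|v\|_\eta$, so $|m|_p\le|V|_p+2|\chi|_\infty$. Finally $|V|_p\le|v|_\infty|H|_p\ll\|v\|_\eta$ since $H\in L^p(Y)$ and $|V|\le|v|_\infty H$ pointwise. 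Combining gives $|m|_p\le C\|v\|_\eta$.

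The main obstacle, and the only genuinely non-routine step, is the first one: recognizing that $V$ itself is too rough to feed into the spectral-gap estimate, and that one must first smooth it by one application of $P$, using Proposition~\ref{prop:GM}(b) together with the sharp comparison $|1_{Y_j}V|\ll\inf_{Y_j}H$ from Proposition~\ref{prop:VY} (this is exactly why that proposition was proved). Everything after that is the standard Gordin argument; the only bookkeeping subtlety is the passage from $\theta$ to $\theta_1=\theta^\eta$, which is harmless since all the Gibbs--Markov estimates hold for any base $\theta\in(0,1)$.
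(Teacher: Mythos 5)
Your proposal is correct and follows essentially the same route as the paper's proof: bound $V$ via Proposition~\ref{prop:VY}, smooth once with $P$ using Proposition~\ref{prop:GM}(b) (with $g$ a multiple of $\|v\|_\eta H$), sum $\chi=\sum_{n\ge1}P^nV$ in $F_{\theta_1}(Y)\subset L^\infty$ via Proposition~\ref{prop:GM}(a), and get $|m|_p\le|V|_p+2|\chi|_\infty\ll\|v\|_\eta$. The sign muddle in your verification of $m\in\ker P$ is pure bookkeeping: with $\chi=\sum_{n\ge1}P^nV$ and $m=V-\chi\circ F+\chi$ one has $Pm=PV-\chi+P\chi=PV-PV=0$, exactly as in the paper, so no sign change on $\chi$ is needed.
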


\begin{proof}
 By Proposition~\ref{prop:VY}, $V\in L^p(Y,\R^d)$ with
$|V|_p\le |v|_\infty |H|_p$.

By Proposition~\ref{prop:GM}(b) and
Proposition~\ref{prop:VY},
$\|PV\|_{\theta_1}\ll \|v\|_\eta|H|_1\ll \|v\|_\eta$.
By Proposition~\ref{prop:GM}(a),
$\|P^nV\|_{\theta_1}\ll \gamma^n\|PV\|_{\theta_1}$ for $n\ge1$.
 In particular,  $\chi=\sum_{n=1}^\infty P^nV$ converges in $L^\infty(Y)$, and we obtain $V=m+\chi\circ F-\chi$
with $|\chi|_\infty\ll \|v\|_\eta$ and
$|m|_p\le |V|_p+2|\chi|_\infty\ll \|v\|_\eta$.
Finally, $Pm=PV-\chi+P\chi=0$.
\end{proof}

Let $V_n=\sum_{j=0}^{n-1}V\circ F^j$.

\begin{cor} \label{cor:Burk}
Suppose that $H\in L^p(Y)$ where $p\ge2$.
There is a constant $C>0$ such that
$\big|\max_{1\le k\le n}|V_k|\big|_p\le Cn^{1/2}\|v\|_\eta$
for all $v\in C^\eta_0(M,\R^d)$, $n\ge1$.
\end{cor}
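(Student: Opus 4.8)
The plan is to exploit the martingale--coboundary decomposition already established in Proposition~\ref{prop:m}. Write $V=m+\chi\circ F-\chi$ with $m\in\ker P$, $|m|_p\le C\|v\|_\eta$ and $|\chi|_\infty\le C\|v\|_\eta$. Summing along orbits telescopes the coboundary, giving
\[
V_n=M_n+\chi\circ F^n-\chi,\qquad M_n=\sum_{j=0}^{n-1}m\circ F^j .
\]
Since $|\chi\circ F^n-\chi|\le 2|\chi|_\infty\ll\|v\|_\eta$ pointwise, it suffices to bound $\big|\max_{1\le k\le n}|M_k|\big|_p$.

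Next I would observe that $\{m\circ F^j\}_{j\ge0}$ is a reverse martingale difference sequence. Indeed, $UP=\E(\,\cdot\mid F^{-1}\cB)$, so $\E(m\mid F^{-1}\cB)=(Pm)\circ F=0$, and applying $U^j$ gives $\E(m\circ F^j\mid F^{-(j+1)}\cB)=0$, where the $\sigma$-algebras $\cB_j=F^{-j}\cB$ form a decreasing filtration with $m\circ F^j$ being $\cB_j$-measurable. Hence $M_n$ is a reverse martingale (componentwise in $\R^d$), and Burkholder's inequality yields
\[
\big|{\SMALL\max_{1\le k\le n}}|M_k|\big|_p\ll \Big|\Big(\sum_{j=0}^{n-1}|m\circ F^j|^2\Big)^{1/2}\Big|_p ,
\]
with an implied constant depending only on $p$ (and $d$).

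To estimate the right-hand side, use that $F$ preserves $\mu_Y$, so $|m\circ F^j|_p=|m|_p$ for every $j$, and apply Minkowski's inequality in $L^{p/2}(Y)$ (valid since $p\ge2$):
\[
\Big|\sum_{j=0}^{n-1}|m\circ F^j|^2\Big|_{p/2}\le\sum_{j=0}^{n-1}\big||m\circ F^j|^2\big|_{p/2}=\sum_{j=0}^{n-1}|m\circ F^j|_p^2=n\,|m|_p^2 .
\]
Taking square roots gives $\big|\max_{1\le k\le n}|M_k|\big|_p\ll n^{1/2}|m|_p\ll n^{1/2}\|v\|_\eta$, and combining with the pointwise bound on the coboundary (together with $n\ge1$) finishes the proof.

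As for the main obstacle: there is really no deep point here --- everything reduces to the decomposition of Proposition~\ref{prop:m} plus standard martingale machinery. The only things to be careful about are that one is dealing with a \emph{reverse} martingale (a decreasing filtration), so one should invoke the version of Burkholder's inequality appropriate to that setting, and that for $\R^d$-valued $V$ one either argues coordinatewise or uses the vector-valued form of the inequality, which only affects constants since $d$ is fixed.
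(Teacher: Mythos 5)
Your proof is correct and follows essentially the same route as the paper: the martingale--coboundary decomposition of Proposition~\ref{prop:m}, the observation that $\{m\circ F^j\}$ are reverse martingale differences since $Pm=0$, Burkholder's inequality, and the $L^{p/2}$ triangle-inequality bound $n^{1/2}|m|_p$ on the square function, followed by absorbing the bounded coboundary. The only cosmetic difference is that the paper first passes from $\max_k|m_k|$ to $|m_n|_p$ via the maximal inequality and then applies Burkholder, whereas you invoke the maximal (Burkholder--Davis--Gundy type) form directly; both are standard and equivalent here.
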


\begin{proof}
By Proposition~\ref{prop:m}, $\E(m|F^{-1}\cB)=UPm=0$.
Hence $\{m\circ F^n,\,n\ge0\}$ is a sequence of reversal martingale differences with respect to the sequence of $\sigma$-algebras $\{F^{-n}\cB,\,n\ge0\}$.
Let $m_n=\sum_{j=0}^{n-1}m\circ F^j$.
Since $p\ge2$, it follows from Burkholder's inequality~\cite[Theorem~3.2 combined with (1.4)]{Burkholder73} that
\[
\big|\max_{1\le k\le n}|m_k|\big|_p
\le {\SMALL\frac{p}{p-1}}|m_n|_p\ll \Big|\Big(\sum_{j=1}^n m^2\circ F^j\Big)^{1/2}\Big|_p
\le n^{1/2}|m|_p
\ll n^{1/2}\|v\|_\eta.
\]

Since $V_n=m_n+\chi\circ F^n-\chi$ and
$\chi\in L^\infty$, we obtain
$\big|\max_{1\le k\le n}|V_k|\big|_p
\le \big|\max_{1\le k\le n}|m_k|\big|_p+2|\chi|_\infty
\ll n^{1/2}\|v\|_\eta$.
\end{proof}

\subsection{Central limit theorems}
\label{sec:CLT}

Let $v\in C^\eta(M,\R^d)$ and define
\[
W_n(t)=n^{-1/2}\int_0^{nt}v\circ T_s\,ds,
\qquad
\BBW_n(t)=n^{-1}\int_0^{nt} \int_0^s (v\circ T_r)\otimes(v\circ T_s)\,dr ds.
\]
Also, define $\hat v,\,\psi:Y^H\to\R^d$,
\[
\hat v(y,u)=v(T_uy), \qquad
\SMALL \psi(y,u)=\int_0^u v(T_sy)\,ds.
\]

\begin{thm}   \label{thm:CLT}
Suppose that $H\in L^2(Y)$.  Let $v\in C^\eta_0(M,\R^d)$.
Then there exist $\Sigma$, $D\in\R^{d\times d}$ with $\Sigma^T=\Sigma$ and $D^T=-D$ such that
$(W_n,\BBW_n)\to_w (W,\BBW)$ in $C([0,\infty),\R^d\times\R^{d\times d})$
on $(M,\mu_M)$,
where $W$ is a $d$-dimensional Brownian motion with covariance $\Sigma$ and
$\BBW(t)=\int_0^t W\otimes\circ dW+Dt$.
\end{thm}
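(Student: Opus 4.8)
The plan is to transfer the statement to the suspension $(Y^H,\mu^H)$. Since $\pi:Y^H\to M$ is a measurable semiconjugacy with $\pi_*\mu^H=\mu_M$ and $v\circ T_s\circ\pi=\hat v\circ F_s$, the processes $W_n,\BBW_n$ pull back under $\pi$ to the analogous processes $\widetilde W_n,\widetilde{\BBW}_n$ built from $\hat v$ on $(Y^H,\mu^H)$, and so it suffices to prove $(\widetilde W_n,\widetilde{\BBW}_n)\to_w(W,\BBW)$ on $(Y^H,\mu^H)$. I would argue in two stages: (i) establish the iterated WIP for the discrete Gibbs--Markov map $F:Y\to Y$ with induced observable $V$; and (ii) pass from $F$ to the semiflow $F_t$ --- the step where $H\in L^2$ is used essentially.

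For stage (i), Proposition~\ref{prop:m} gives the martingale--coboundary decomposition $V=m+\chi\circ F-\chi$ with $m\in L^2(Y,\R^d)\cap\ker P$ and $\chi\in L^\infty(Y,\R^d)$, and (as in Corollary~\ref{cor:Burk}) $\{m\circ F^n\}$ is a sequence of reverse martingale differences for $\{F^{-n}\cB\}$. Applying the iterated martingale invariance principle (in the form used in~\cite{KM16,KM17}) to $m_n=\sum_{j<n}m\circ F^j$, and then reinstating $V$ via $V_k=m_k+\chi\circ F^k-\chi$, yields on $(Y,\mu_Y)$ the weak convergence of the discrete pair $\bigl(n^{-1/2}V_{\lfloor nt\rfloor},\;n^{-1}\sum_{0\le i<j\le\lfloor nt\rfloor}V\circ F^i\otimes V\circ F^j\bigr)$ to $\bigl(W^m(t),\int_0^t W^m\otimes dW^m+E_0 t\bigr)$, where $W^m$ is $d$-dimensional Brownian motion with covariance $\Sigma_0=\int_Y m\otimes m\,d\mu_Y$ (the nonzero-lag correlations of $m$ vanish because $\E(m\,|\,F^{-k}\cB)=0$ for $k\ge1$) and $E_0\in\R^{d\times d}$ is explicit: since $\chi$ is uniformly bounded it contributes $O(|\chi|_\infty)=o(n^{1/2})$ to the first coordinate and hence nothing in the limit, and in the iterated coordinate it produces, after Abel-summing the mixed sums $\sum_{i<j}m\circ F^i\otimes(\chi\circ F^{j+1}-\chi\circ F^j)$ and its transpose, finitely many ergodic averages which by Birkhoff's theorem assemble into (part of) $E_0$.

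For stage (ii), put $H_n=\sum_{j<n}H\circ F^j$, and for $(y,u)\in Y^H$ let $N(t)$ count the complete roof intervals crossed by time $t$ along $F_s(y,u)$; then $\int_0^{nt}\hat v\circ F_s\,ds=V_{N(nt)}$ up to two incomplete end-pieces, each of absolute value at most $|v|_\infty\,H\circ F^j$ for some $j\le N(nt)$. Birkhoff's theorem gives $H_n/n\to\bH$ a.e., hence $N(nt)/n\to t/\bH$ uniformly on compact $t$-sets, while $H\in L^2$ gives $\max_{j\le n}H\circ F^j=o_p(n^{1/2})$ (Markov's inequality applied to $H^2$, with stationarity and uniform integrability of $H^2$), which kills the end-pieces at scale $n^{1/2}$; a random-time-change argument applied to stage (i) then gives $\widetilde W_n\to_w W$ with $\Sigma=\bH^{-1}\Sigma_0$. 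For the iterated integral, split the region $\{0\le r\le s\le nt\}$ according to which roof intervals contain $r$ and $s$: the contribution from $r,s$ in different intervals reproduces the discrete iterated sum of stage (i) run to $N(nt)$, modulo end-piece remainders bounded by $|V_{N(nt)}|\,|v|_\infty\max_{j\le N(nt)}H\circ F^j=O_p(n^{1/2})\,o_p(n^{1/2})=o_p(n)$ (here Corollary~\ref{cor:Burk} enters), while the contribution from $r,s$ in the same interval equals $\sum_{j<N(nt)}g\circ F^j$ with $g(y)=\int_0^{H(y)}\!\!\int_0^s v(T_ry)\otimes v(T_sy)\,dr\,ds$, and $g\in L^1(Y)$ precisely because $H\in L^2$, so by Birkhoff this converges to $(t/\bH)\int_Y g\,d\mu_Y$. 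Collecting terms, $\widetilde{\BBW}_n\to_w\BBW$ with $\BBW(t)=\int_0^t W\otimes dW+Et$ for an explicit $E\in\R^{d\times d}$; the elementary identity $\BBW_n(t)+\BBW_n(t)^T=W_n(t)\otimes W_n(t)$ (Fubini) passes to the limit, forcing $\Sigma^T=\Sigma$ and $(E-\tfrac12\Sigma)^T=-(E-\tfrac12\Sigma)$, so with $D=E-\tfrac12\Sigma$ one gets the stated Stratonovich form $\BBW(t)=\int_0^t W\otimes\circ dW+Dt$. Convergence in $C([0,\infty),\R^d\times\R^{d\times d})$ reduces to convergence on each $[0,T]$.

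The step I expect to be the main obstacle is stage (ii) with an \emph{unbounded} roof: one must show that the incomplete end-pieces and the off-diagonal cross-step remainders in the iterated integral are negligible (at scales $n^{1/2}$ and $n$ respectively) and that the within-step density $g$ is integrable, and this is exactly where $H\in L^2$ --- rather than mere integrability --- is indispensable, through $\max_{j\le n}H\circ F^j=o_p(n^{1/2})$ together with the moment bound of Corollary~\ref{cor:Burk}. A secondary difficulty is keeping track of the several drift contributions --- the coboundary corrections from stage (i), the $\bH^{-1}$ rescaling, and the within-step term --- and checking that they assemble into a single antisymmetric matrix $D$; tightness of $(\widetilde W_n,\widetilde{\BBW}_n)$ is routine, following from Corollary~\ref{cor:Burk} and its iterated analogue.
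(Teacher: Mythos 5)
Your proposal is correct and follows essentially the same route as the paper: inducing to $Y$, the martingale--coboundary decomposition of Proposition~\ref{prop:m} with $p=2$ together with the discrete iterated WIP for Gibbs--Markov maps from~\cite{KM16}, and then passage to the suspension $Y^H$ and to $M$ via lap numbers, Corollary~\ref{cor:Burk} and the ergodic theorem. The only difference is presentational: you re-derive by hand the inducing step and the skew-symmetry of $D$ (via the symmetrisation identity), whereas the paper obtains precisely these points by verifying its conditions (a)--(c) and invoking \cite[Theorem~6.1 and Corollary~8.1]{KM16}; your end-piece and within-lap estimates correspond exactly to those verifications.
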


\begin{proof}
There are two main steps where
we work with
(i) the induced observable $V:Y\to\R^d$,
(ii) the observable $v:M\to\R^d$.
(This is more direct than the procedure indicated in~\cite[Section~10.3]{KM16}.)

Define
\[
W_n^Y(t)=n^{-1/2}\sum_{0\le j\le [nt]-1}V\circ F^j,
\qquad
\BBW_n^Y(t)=n^{-1}\sum_{0\le i<j\le [nt]-1}(V\circ F^i)\otimes(V\circ F^j).
\]
Recall that the induced map $F:Y\to Y$ is mixing.
Using the decomposition in Proposition~\ref{prop:m} with $p=2$, it follows
from~\cite[Theorem~4.3]{KM16} that
$(W_n^Y,\BBW_n^Y)\to (W^Y,\BBW^Y)$ in $D([0,\infty),\R^d\times\R^{d\times d})$
on $(Y,\mu_Y)$
where $W^Y$ is a $d$-dimensional Brownian motion with some covariance
matrix $\Sigma_Y\in\R^{d\times d}$
and
\[
\BBW^Y(t)=\int_0^t W^Y\otimes \,dW^Y+E_Yt \quad\text{where}\quad
E_Y\in \R^{d\times d}.
\]
This completes step (i).

Next, we claim that
\begin{itemize}
\item[(a)] $V\in L^2(Y,\R^d)$,\hspace{1.5em} $|\psi| |\hat v|\in L^1(Y^H)$.
\item[(b)] $n^{-1/2}\sup_{t\in[0,T]}|\psi\circ f_{nt}|\to_w0$.
\item[(c)] $\lim_{n\to\infty} n^{-1}\big|\max_{1\le k\le n}|\sum_{1\le j\le k} V\circ F^j|\big|_2=0$.
\end{itemize}
To verify the claim, we proceed as in~\cite[Remark~6.2]{KM16}.
First, $H\in L^2(Y)$ so $V\in L^2(Y,\R^d)$ by Proposition~\ref{prop:VY}.  Also
$|\psi(y,u)|\le |v|_\infty H(y)$, so
$\int_{Y^H}|\psi|\,d\mu^H\le |v|_\infty\bH^{-1}\int_YH^2\,d\mu<\infty$.  Hence (a) is satisfied.
Item~(c) is immediate by Corollary~\ref{cor:Burk}.
To prove~(b), we argue as in the proof of~\cite[Proposition~6.6(b)]{KM16}.
Define $g:Y\to\R$, $g(y)=\int_0^{H(y)}|v(T_sy)|\,ds$.
Then $g(y)\le |v|_\infty H(y)$ so $g^2\in L^1(Y)$.
By the ergodic theorem,
$n^{-1/2}g\circ F^n\to0$ a.e.  Define the lap number $N(t):Y^H\to \N$ to be the largest integer $n\ge0$ such that $u+\sum_{j=0}^{n-1}H(F^jy)\le t$.
Then $N(nt)/n\to \bH^{-1}$ a.e.\ and so
$n^{-1/2}g\circ F^{N(nt)}=(N(nt)/n)^{1/2}N(nt)^{-1/2}
g\circ F^{N(nt)}\to0$ a.e.
Also, $|\psi(y,u)|\le g(y)$, so
\[
\SMALL |\psi\circ f_t(y,u)|=\big|\psi\big(F^{N(t)(y,u)}y,u+t-\sum_{j=0}^{N(t)(y,u)-1}H(F^jy)\big)\big|
\le g(F^{N(t)(y,u)}y).
\]
Hence $n^{-1/2}\psi\circ f_{nt}\to0$ a.e.\ and (b) follows.
This completes the verification of the claim.

Since the claim holds, the hypotheses of~\cite[Theorem~6.1]{KM16} are satisfied
(with $V$, $\psi$, $\hat v$ here playing the roles of $\tilde v$, $H$, $v$ in~\cite[Theorem~6.1]{KM16}).
It follows from~\cite[Theorem~6.1]{KM16} that the iterated WIP for $V$ on $Y$ implies an iterated WIP for $\hat v$ on $Y^H$ and hence,
via the semiconjugacy $\pi:Y^H\to M$, an iterated WIP for $v$ on $M$.
We conclude that
$(W_n,\BBW_n)\to_w (W,\BBW)$
in $C([0,\infty),\R^d\times\R^{d\times d})$ on $(M,\mu_M)$,
where
\[
W=\bH^{-1/2}W^Y,\qquad
\SMALL \BBW(t)=\int_0^t W\otimes dW+Et, \qquad
\SMALL E=\bH^{-1}E_Y
+\int_{Y^H} \psi\otimes \hat v\,d\mu^H.
\]
The Stratonovich correction gives $D=E-\frac12\Sigma$ skew-symmetric
as in the proof of~\cite[Corollary~8.1]{KM16}.
\end{proof}

\begin{rmk}
By~\cite[Theorem~6.1]{KM16},
\begin{align*}
\Sigma & =\bH^{-1}\Big(\int_Y V\otimes V\,d\mu_Y+\sum_{n=1}^\infty
\int_Y\big\{V\otimes(V\circ F^n)+(V\circ F^n)\otimes V\big\}\,d\mu_Y\Big),
\\
2D & =\bH^{-1}\sum_{n=1}^\infty\int_Y \{V\otimes(V\circ F^n)-(V\circ F^n)\otimes V\}\,d\mu_Y
+\int_{Y^H}(\psi\otimes \hat v-\hat v\otimes\psi)\,d\mu^H.
\end{align*}
If in addition the semiflow $T_t$ is mixing, then the formulas in
Remark~\ref{rmk:SigmaD} hold.
\end{rmk}

\begin{prop}  \label{prop:deg}
There is a closed subspace $S\subset C_0^\eta(M,\R^d)$ of infinite codimension
such that $\det\Sigma>0$ for all $v\in C_0^\eta(M,\R^d)\setminus S$.
\end{prop}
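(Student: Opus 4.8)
The statement concerns nondegeneracy of the limiting covariance $\Sigma$ for a "generic" observable, so the plan is to reduce the flow statement to a statement about the induced Gibbs-Markov map $F:Y\to Y$ and then invoke a standard degeneracy dichotomy in that setting. Recall from Theorem~\ref{thm:CLT} and the subsequent remark that $\Sigma=\bH^{-1}\big(\int_Y V\otimes V\,d\mu_Y+\sum_{n\ge1}\int_Y(V\otimes V\circ F^n+V\circ F^n\otimes V)\,d\mu_Y\big)$, where $V(y)=\int_0^{H(y)}v(T_uy)\,du$ is the induced observable. For a single scalar direction $c\in\R^d$, $c^T\Sigma c$ is (up to the positive constant $\bH^{-1}$) the usual CLT variance $\sigma^2_{c\cdot V}$ for the Birkhoff sums of $c^TV$ under the Gibbs-Markov map $F$. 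So $\det\Sigma=0$ iff there exists $c\neq0$ with $\sigma^2_{c^TV}=0$.

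First I would recall the standard characterization of variance degeneracy for Gibbs-Markov maps: using the martingale-coboundary decomposition $V=m+\chi\circ F-\chi$ from Proposition~\ref{prop:m} (with $p=2$), one has $c^T\Sigma c=0$ iff $c^Tm=0$ a.e., which in turn (by a classical argument, cf.\ the references to \cite{AaronsonDenker01}, \cite{KM16}) is equivalent to $c^TV$ being an $L^2$ coboundary, $c^TV=g\circ F-g$ for some $g\in L^2(Y)$. Then I would define $S=\{v\in C_0^\eta(M,\R^d): c^TV \text{ is an }L^2\text{ coboundary for some }c\neq0\}$, or more precisely the set where $\det\Sigma=0$; the content of the proposition is that this set is contained in a closed subspace of infinite codimension. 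Actually it is cleaner to produce $S$ directly: pick a sequence of periodic orbits for the flow (equivalently, periodic points $y_k$ of $F$ with distinct periods/return data) and let $S$ be the set of $v$ for which the "period integrals" $\oint_{\gamma_k} v$ all vanish in a suitably redundant way — but the standard route is via the induced map, which I follow below.

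The key steps, in order: (1) Observe $c^T\Sigma c \propto \sigma^2_{c^TV}$ and that $c^T\Sigma c=0$ for some $c\neq0$ iff $\det\Sigma=0$ (since $\Sigma$ is symmetric positive semidefinite). (2) Show $\sigma^2_{c^TV}=0$ iff $c^TV=\varphi\circ F-\varphi$ for some $\varphi\in L^2(Y)$; this is the Gibbs-Markov degeneracy lemma applied to the piecewise-Lipschitz observable $c^TV$ (note $V$ is piecewise $d_{\theta_1}$-Lipschitz by Proposition~\ref{prop:VY}). (3) Exploit the suspension structure: a coboundary relation $c^TV=\varphi\circ F-\varphi$ for the induced observable forces, via $V(y)=\int_0^{H(y)} v(T_uy)\,du$ and standard Livšic-type reasoning, that $c^Tv$ integrates to zero around every periodic orbit of $T_t$ contained in (the relevant part of) $\Lambda$, equivalently that $c^T v = G\circ T_t$-derivative $+$ ... i.e.\ $c^T\hat v$ is a coboundary for the suspension semiflow. (4) Choose a countable family of periodic orbits $\gamma_1,\gamma_2,\dots$ of the flow whose "period vectors" $P_k=\big(\oint_{\gamma_k}\phi_i\big)_i$, for a fixed infinite linearly independent family $\{\phi_i\}\subset C_0^\eta(M)$, span an infinite-dimensional space; then $S:=\{v: \oint_{\gamma_k} v=0 \text{ for all }k\}$ is closed (each functional $v\mapsto\oint_{\gamma_k}v$ is continuous on $C_0^\eta$) and has infinite codimension (because the periodic orbits separate enough observables). (5) Conclude: if $v\notin S$ then some period integral $\oint_{\gamma_k}(c^Tv)$ is nonzero for every $c\neq0$... — here one must be slightly careful, as the condition "$c^TV$ coboundary for some $c\neq0$" is a union over $c$; the right statement is that $\{v:\det\Sigma=0\}$ is contained in $\bigcup_{c\in S^{d-1}}\{v: \oint(c^Tv)=0 \text{ around all }\gamma_k\}$, and one shows this union is itself contained in a single closed infinite-codimension subspace by a dimension-counting argument on the period vectors in $\R^d$.

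\emph{Main obstacle.} The genuinely delicate point is step (3)–(5): transferring the coboundary obstruction from the induced Gibbs-Markov map back to a clean geometric condition (vanishing of period integrals of the flow) and then showing the "bad set" $\{\det\Sigma=0\}$ — a priori only a countable union over directions $c$ of linear subspaces — sits inside one closed subspace of infinite codimension. The per-direction subspaces $\{v: c^TV \text{ coboundary}\}$ need not be closed individually in an obvious way, and their union over $c\in S^{d-1}$ is not a subspace; the fix is to replace the coboundary condition by the weaker linear condition "all period integrals $\oint_{\gamma_k}v$ vanish" (which is necessary for degeneracy in \emph{every} direction simultaneously only after a dimension count), take $S$ to be an intersection of countably many of those kernels, and verify infinite codimension by producing infinitely many periodic orbits with linearly independent period data — which for singular hyperbolic / Gibbs-Markov systems follows from density and abundance of periodic orbits together with the freedom in choosing the separating family $\{\phi_i\}$. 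I would isolate this as a short lemma: "for a transitive Gibbs-Markov map, $\{v\in F_{\theta_1}(Y):\det\Sigma_V=0\}$ lies in a closed subspace of infinite codimension," proved by the periodic-point argument, and then pull it back through $\pi:Y^H\to M$ using Proposition~\ref{prop:VY} to control the norms.
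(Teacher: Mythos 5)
Your steps (1)--(4) are essentially the paper's proof. The paper argues exactly as you do: $\det\Sigma=0$ gives $c\neq0$ with $c^T\Sigma_Y c=0$; since $\Sigma_Y=\int_Y m\otimes m\,d\mu_Y$ (from the martingale--coboundary decomposition of Proposition~\ref{prop:m}), this forces $c\cdot m=0$ a.e., hence $c\cdot V=g\circ F-g$ with $g=c\cdot\chi$; because $\chi=\sum_{n\ge1}P^nV$ is $d_{\theta_1}$-Lipschitz the identity holds pointwise, so evaluating along the dense set of periodic points of the full-branch Gibbs--Markov map $F$ yields the linear constraints $c\cdot\int_0^{H_q(y)}v(T_ty)\,dt=0$, and $S$ is defined by these constraints. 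One small point you should make explicit: your step (2) phrases degeneracy as ``$c^TV$ is an $L^2$ coboundary'', which by itself gives nothing at the measure-zero set of periodic points; the evaluation in step (3) requires the transfer function to be continuous (Lipschitz), which is exactly what the explicit choice $g=c\cdot\chi$ provides, so keep that regularity in the statement rather than appealing to an abstract $L^2$ characterisation.

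Where you genuinely diverge is your step (5), and there the fix you sketch cannot work. You propose to absorb $\bigcup_{c\neq0}S_c$ (with $S_c$ the kernel of the period-integral functionals in direction $c$) into a single closed subspace of infinite codimension by a dimension count on period vectors. For $d\ge2$ this is impossible: any $v$ with one identically vanishing component already has $\det\Sigma=0$, and such observables span all of $C_0^\eta(M,\R^d)$, so no proper closed subspace contains the degeneracy set, let alone $\bigcup_c S_c$. The paper does not attempt this uniformisation over $c$: its proof fixes the direction $c$ arising from the assumed degeneracy and defines $S$ in terms of that $c$, i.e.\ what the argument actually establishes is the per-direction statement (for each fixed $c\neq0$, the set of $v$ with $c^T\Sigma c=0$ lies in a closed subspace of infinite codimension; equivalently the scalar statement $\sigma^2>0$ off such a subspace, as in the CLT of the introduction). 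So you should drop the attempted dimension-counting step and formulate the conclusion per direction $c$, exactly as the chain (1)--(4) delivers; trying to prove the $c$-uniform version as you set it up is a dead end, not a technical gap to be filled.
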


\begin{proof}
Let $\Sigma_Y$ be as in the proof of Theorem~\ref{thm:CLT},
so $\Sigma=\bH^{-1}\Sigma_Y$.
Suppose that $\det\Sigma=0$, so $\det\Sigma_Y=0$.  Then there is a nonzero vector $c\in\R^d$ such that $c^T\Sigma_Y c=0$.

It is standard that
$\Sigma_Y=\int_Y m\otimes m\,d\mu_Y$
(see for example the fourth line of the proof of~\cite[Theorem~4.3]{KM16}),
so $\int_Y(c\cdot m)^2\,d\mu_Y=0$ and hence $c\cdot m=0$ a.e.
By Proposition~\ref{prop:m}, $c\cdot V=g\circ F-g$ a.e. where $g=c\cdot\chi$.
Moreover, it follows from the proof of Proposition~\ref{prop:m} that
$\chi:Y\to \R^d$ is $d_{\theta_1}$-Lipschitz and hence extends uniquely to
a $d_{\theta_1}$-Lipschitz function on $Y$.  It follows that the equation $c\cdot V=g\circ F-g$ extends to the whole of $Y$.

For $q\ge1$, let $P_q$ denote the set of points $y\in Y$ such that $F^qy$ is well-defined and $F^q y= y$.
Since $F:Y\to Y$ is full-branch and Markov, such
periodic points are dense in $Y$.
Note that $c\cdot\sum_{j=0}^{q-1}V(F^jy)=g(F^qy)-g(y)=0$ for $y\in P_q$.
It follows that $c\,{\cdot} \int_0^{H_q(y)}v(T_ty)\,dt=0$ where
$H_q(y)=\sum_{j=0}^{q-1}H(F^jy)$.
Hence, for each periodic orbit $\{y,Fy,\dots,F^{q-1}y\}\subset P_q$, $q\ge1$, we obtain a distinct nontrivial linear constraint on~$v$.
Now define $S=\{v\in C_0^\eta(M,\R^d):
c\,{\cdot}\int_0^{H_q(y)}v(T_ty)\,dt=0 \;\text{for all $y\in P_q$, $q\ge1$}\}$.
\end{proof}

\subsection{Moment estimates}
\label{sec:moment}

Given $v\in C^\eta(M,\R^d)$, define
\begin{align} \label{eq:vS}
v_t=\int_0^t v\circ T_s\,ds, \qquad
S_t=\int_0^t \int_0^s (v\circ T_r)\otimes(v\circ T_s)\,dr\,ds.
\end{align}

\begin{thm} \label{thm:moment}
\begin{itemize}
\item[(a)] If $H\in L^p(Y)$ for some $p\ge2$ and $\inf H>0$, then there exists $C>0$ such that
\[
|v_t|_{p-1}\le
Ct^{1/2}\|v\|_{\eta}
\quad\text{for all $v\in C^{\eta}_0(M,\R^d)$, $t>0$.}
\]
\item[(b)] If $H\in L^p(Y)$ for some $p\ge4$ and $\inf H>0$, then there exists $C>0$ such that
\[
|S_t|_{(p-1)/2}\le Ct\|v\|_{\eta}^2
\quad\text{for all $v\in C^{\eta}_0(M,\R^d)$, $t>0$.}
\]
\end{itemize}
\end{thm}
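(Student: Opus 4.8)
The strategy is to transfer the moment bounds from the suspension semiflow $F_t: Y^H \to Y^H$ to the original semiflow $T_t: M \to M$ via the semiconjugacy $\pi$, and to prove the bounds on $Y^H$ by relating the continuous-time Birkhoff sums to the discrete-time Birkhoff sums $V_k = \sum_{j=0}^{k-1} V \circ F^j$ controlled in Corollary~\ref{cor:Burk}. The main point is a lap-number comparison: for a point $(y,u) \in Y^H$, the integral $\int_0^t v(F_s(y,u))\,ds$ equals $\psi(f_{t}(y,u)) - \psi(y,u) + V_{N(t)}(y)$ up to boundary terms, where $N(t)$ is the lap number defined in the proof of Theorem~\ref{thm:CLT} and $\psi$ is the fibered integral of $v$; since $|\psi| \le |v|_\infty H$ and $H \in L^p$, the boundary terms contribute only lower-order error once we track moments carefully.

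For part~(a): First I would fix $t>0$ and, working on $(Y^H,\mu^H)$ with $v$ replaced by $\hat v$, write $v_t = V_{N(t)} \circ \pi^{-1}$-type expression plus the coboundary-like remainder $\psi\circ f_t - \psi$. Because $\inf H > 0$, the lap number satisfies $N(t) \le t/\inf H$ deterministically, so $\max_{1\le k \le N(t)}|V_k|$ is bounded by $\max_{1\le k \le \lceil t/\inf H\rceil}|V_k|$, and Corollary~\ref{cor:Burk} (which needs $H\in L^p$, $p\ge 2$, hence $V\in L^p$ and the Burkholder estimate with exponent $p$) gives $\big|\max_{1\le k\le N(t)}|V_k|\big|_p \ll t^{1/2}\|v\|_\eta$. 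For the remainder $\psi\circ f_t - \psi$, I have $|\psi| \le g$ with $g \in L^p(Y)$ (as in the proof of Theorem~\ref{thm:CLT}), and $\psi \circ f_t = g \circ F^{N(t)}$ up to the fibered integral; the Hopf-type argument bounding $\big|\max_n g\circ F^n\big|$ in terms of $|g|_p$ costs exactly one power of $p$, which is why the conclusion is stated with $p-1$ rather than $p$. Pushing forward through $\pi$ and using $\mu_M = \pi_*\mu^H$ transfers the bound to $M$.

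For part~(b): the iterated integral $S_t$ is the continuous-time analogue of $\BBW_n^Y$, and the natural decomposition is $S_t \approx \frac12 v_t \otimes v_t - \frac12\int_0^t v\circ T_s \otimes v\circ T_s\,ds + (\text{antisymmetric discrete part})$. The discrete antisymmetric part $\sum_{0\le i<j\le N(t)-1}(V\circ F^i)\otimes(V\circ F^j)$ is controlled by a discrete iterated-moment estimate: writing it via the martingale-coboundary decomposition $V = m + \chi\circ F - \chi$ of Proposition~\ref{prop:m}, the dominant term is $\sum_{i<j} m\circ F^i \otimes m\circ F^j$, which one bounds in $L^{p/2}$ by $\ll t$ using Burkholder applied to the martingale $\sum_j (m_{j}\otimes m\circ F^j)$ together with $|m|_p \ll \|v\|_\eta$ — this is the standard argument (as in~\cite[Section~4]{KM16}) and consumes two powers of $p$ rather than one, explaining the hypothesis $p\ge 4$ and the loss to $(p-1)/2$. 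The pointwise term $|v_t|^2$ is handled by part~(a) (squaring a bound in $L^{p-1}$ gives a bound in $L^{(p-1)/2}$), and $\int_0^t v\circ T_s\otimes v\circ T_s\,ds$ is trivially $O(t)$ in $L^\infty$ since $v$ is bounded. Collecting, $|S_t|_{(p-1)/2} \ll t\|v\|_\eta^2$, and transferring through $\pi$ finishes the proof.

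The step I expect to be the main obstacle is the careful bookkeeping of the boundary/lap-number error terms: relating $\int_0^t \hat v \circ f_s\,ds$ on the suspension to the discrete sum $V_{N(t)}$ requires handling the partial laps at both endpoints, and one must verify that $\sup_{0 \le s \le t}|\psi \circ f_s|$ (not just $|\psi \circ f_t|$) has an $L^p$-moment of the right order — this is where the uniform-in-$s$ maximal bound and the hypothesis $\inf H > 0$ (which makes $N(t)$ deterministically $O(t)$ and prevents the lap number from having heavy tails) are essential. Everything else is an assembly of Corollary~\ref{cor:Burk}, the known discrete iterated-WIP moment bounds, and the measure-pushforward $\mu_M = \pi_*\mu^H$.
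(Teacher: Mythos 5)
Your skeleton is essentially the paper's: decompose $\hat v_t=V_{N(t)}+\psi\circ f_t-\psi$ on the suspension, bound the discrete sums via Corollary~\ref{cor:Burk} (and, for part~(b), via the decomposition of Proposition~\ref{prop:m} together with two applications of Burkholder as in~\cite{KM16}), use $\inf H>0$ to get $N(t)\le at+1$ deterministically, and push through the semiconjugacy. However, there is a genuine gap at the step you dismiss as ``pushing forward through $\pi$''. Your estimates for $\max_{k\le at+1}|V_k|$ (and for $\max_k|Q_k|$ in part~(b)) live in $L^p(Y,\mu_Y)$, whereas the theorem requires estimates on $(Y^H,\mu^H)$, and $\mu^H$ projects to the base as $H\,d\mu_Y/\bH$, not as $\mu_Y$. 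Since $H$ is unbounded, an $L^p(\mu_Y)$ bound does not transfer directly; this is exactly where the exponents $p-1$ and $(p-1)/2$ come from. The missing argument is: introduce the fiberwise supremum $V_t^*(y)=\sup_{u\in[0,H(y)]}|V_{N(t)(y,u)}(y)|$ (note $N(t)(y,u)$ depends on $u$, so a supremum over the fiber is unavoidable) and apply H\"older with exponents $p$ and $p/(p-1)$ to get $\int_{Y^H}|V_{N(t)}|^{p-1}\,d\mu^H\le \bH^{-1}\int_Y H|V_t^*|^{p-1}\,d\mu_Y\le \bH^{-1}|H|_p\,|V_t^*|_p^{p-1}\ll t^{(p-1)/2}\|v\|_\eta^{p-1}$, and similarly with $Q_t^*$ and exponent $(p-1)/2$ versus $p/2$ in part~(b). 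This step, which is what consumes the hypothesis $H\in L^p(Y)$, is absent from your plan.

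Relatedly, your explanation of the exponent loss is misattributed: you ascribe it to a ``Hopf-type'' maximal bound for $g\circ F^{N(t)}$ in the remainder term. In fact no maximal argument over time is needed there: for a fixed-$t$ moment bound only the endpoint value $\psi\circ f_t$ appears, and since $f_t$ preserves $\mu^H$ one has $|\psi\circ f_t|_{L^{p-1}(\mu^H)}=|\psi|_{L^{p-1}(\mu^H)}\ll|v|_\infty$, using $|\psi|\le|v|_\infty H$ and $\int_{Y^H}H^{p-1}\,d\mu^H=\bH^{-1}\int_Y H^p\,d\mu_Y$ (again the loss of one power is the roof-weighted measure, not a maximal inequality). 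Consequently your flagged obstacle concerning $\sup_{0\le s\le t}|\psi\circ f_s|$ is moot; the supremum that genuinely matters is over the fiber coordinate $u$, as above. Finally, in part~(b) your decomposition into $\frac12 v_t\otimes v_t$ plus an antisymmetric part is workable in principle, but the antisymmetric part still reduces to the discrete double sum $Q_{N(t)}$ plus boundary terms such as $V_{N(t)}\otimes(\psi\circ F_t)$ and $\int_0^t(\psi\otimes\hat v)\circ F_s\,ds$; these must be estimated explicitly (the paper does so using $|N(t)|_\infty\ll t$, $|V|\le|v|_\infty H$ and $H\in L^{p-1}(\mu^H)$), and the main term again requires the $Y\to Y^H$ H\"older transfer described above.
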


\begin{proof}
(a) For $t\in[0,1]$, we have $|v_t|_\infty\le t|v|_\infty\le t^{1/2}|v|_\infty$ so we suppose from now on that $t\ge1$.
Recall that $\hat v=v\circ\pi:Y^H\to\R^d$.
To prove the result, it is equivalent to show that
$|\hat v_t|_{L^{p-1}(Y^H)}\le Ct^{1/2}\|v\|_{\eta}$
where $\hat v_t=\int_0^t\hat v\circ F_s\,ds$.

For $t>0$, recall from the proof of Theorem~\ref{thm:CLT} that the lap number $N(t):Y^H\to\N$ is the largest integer $n\ge0$ such that $u+\sum_{j=0}^{n-1}H(F^jy)\le t$.
Then 
\[
\hat v_t(y,u)=V_{N(t)(y,u)}(y)+\psi\circ F_t(y,u)-\psi(y,u)
\quad\text{for $(y,u)\in Y^{H}$,}
\]
where $\psi(y,u)=\int_0^u v(T_sy)\,ds$.
Also, $|N(t)|_\infty\le at+1$ where $a=(\inf H)^{-1}>0$ by assumption.
Let $V^*_t(y)=\sup_{u\in[0,H(y)]}|V_{N(t)(y,u)}(y)|$.
By Corollary~\ref{cor:Burk}, on $Y$,
\[
|V^*_t|_p\le \big|\max_{k\le at+1}|V_k|\big|_p\ll t^{1/2}\|v\|_\eta.
\]
Write $p^{-1}+q^{-1}=1$.  By H\"older's inequality,
\begin{align*}
\int_{Y^H}|V_{N(t)(y,u)}(y)|^{p-1}\,d\mu^H(y,u)
& \le \bH^{-1}\int_Y H|V^*_t|^{p-1}\,d\mu_Y
\le \bH^{-1}|H|_p|V^*_t|_{q(p-1)}^{p-1}
\\ & = \bH^{-1} |H|_p|V^*_t|_p^{p-1} \ll  t^{(p-1)/2}\|v\|_\eta^{p-1}.
\end{align*}
Also, $|\psi\circ F_t|_{p-1}=|\psi|_{p-1}$, and $|\psi|\le H|v|_\infty$.
Using again that $\int_{Y^H} H^{p-1}\,d\mu^H=\bH^{-1}\int_Y H^p\,d\mu_Y$, we have that $|\psi\circ F_t|_{p-1}\ll |v|_\infty$.  Hence
$|\tilde v_t|_{p-1}\ll t^{1/2}\|v\|_\eta$.
\\[.75ex]
(b)
The steps are analogous to those in part~(a).
It suffices to show that
$|\tilde S_t|_{L^{(p-1)/2}(Y^{H})}\le Ct\|v\|_\eta^2$ for $t\ge1$
where $\tilde S_t=
\int_0^t \int_0^s (\hat v\circ F_r)\otimes(\hat v\circ F_s)\,dr\,ds$.

First, we establish the corresponding estimate on $Y$.
Let
\[
Q_k= \sum_{0\le i<j<k}(V\circ F^i)\otimes(V\circ F^j).
\]
By Proposition~\ref{prop:VY},
$V=m+\chi\circ F-\chi$ where
$m\in L^p(Y)$, $\chi\in L^\infty(Y)$ and $m\in\ker P$.
Since $V_n=m_n+\chi\circ F^n-\chi$, it is easily checked that
\[
\big|\max_{0\le k\le n}|Q_k|\big|=A_1+A_2,
\]
where
\[
A_1=
\max_{0\le k\le n}\big|\sum_{0\le i<j<k}(m\circ F^i)\otimes(m\circ F^j)\big|,
\quad |A_2|_{p/2}\ll n(|m|_{p/2}+|V|_{p/2})|\chi|_\infty\ll n\|v\|_\eta^2.
\]
Next,
\[
A_1=\max_{k\le n}\big|\sum_{i=0}^{k-2}X_{i,k}\big|, \qquad
X_{i,k}=\Big(\sum_{j=1}^{k-i-1}m\otimes(m\circ F^j)\Big)\circ F^i.
\]
Now $P^{i+1}X_{i,k}=P\sum_{j=1}^{k-i-1}m\otimes(m\circ F^j)=
\sum_{j=1}^{k-i-1}(Pm)\otimes(m\circ F^{j-1})=0$.
This means that $X_{i,k}$ is a sequence of reverse martingale differences with respect to the filtration $\{F^{-i}\cB,\,i\ge0\}$.  Since $p\ge4$, it follows from Burkholder's inequality that
\[
|A_1|_{p/2}\ll n^{1/2} \max_{i<k\le n}|X_{i,k}|_{p/2}.
\]
Applying Burkholder's inequality once more,
\[
|X_{i,k}|_{p/2}\le |m|_p \big|\max_{j\le n}|m_j|\big|_p\ll n^{1/2}|m|_p^2
\ll n^{1/2}\|v\|_\eta^2.
\]
Hence, on $Y$,
\[
\big|\max_{0\le k\le n}|Q_k|\big|_{p/2}\ll n\|v\|_\eta^2.
\]

To pass this estimate from $Y$ to $Y^H$ (and hence $M$), we use calculations from~\cite[Proposition~7.5]{KM16}.
Define $\psi(y,u)=\int_0^u v(T_sy)\,ds$ and the lap number $N(t)$
as in part~(a).
Then $\hat v_t=V_{N(t)}+\psi\circ F_t-\psi$ and
\begin{align*}
S_t  & = \int_0^t \hat v_s\otimes(\hat v\circ F_s)\,ds
=\int_0^t V_{N(s)}\otimes(\hat v\circ F_s)\,ds
+\int_0^t (\psi\otimes\hat v)\circ F_s\,ds
-\psi\otimes\hat v_t
\\ & =Q_{N(t)}
-\hat v\otimes\psi
+V_{N(t)}\otimes(\psi\circ F_t)
+\int_0^t (\psi\otimes \hat v)\circ F_s\,ds
-\psi\otimes\hat v_t.
\end{align*}
Hence working on $Y^H$, we have
\begin{align*}
\nonumber
|S_t|_{(p-1)/2}  \le
|Q_{N(t)}|_{(p-1)/2}+ 
|v|_\infty^2 |H|_{(p-1)/2}
& +|N(t)|_\infty|V|_{p-1}|v|_\infty|H|_{p-1}
\\ & 
+2t|v|_\infty^2|H|_{(p-1)/2}
 \ll |Q_{N(t)}|_{(p-1)/2}+t|v|_\infty^2.
\end{align*}
Finally, setting $Q_t^*(y)=\sup_{u\in[0,H(y)]}|Q_{N(t)(y,u)}(y)|$, 
\begin{align*}
\int_{Y^H}|Q_{N(t)}|^{(p-1)/2}d\mu^H
& \le \bH^{-1}\int_Y H|Q_t^*|^{(p-1)/2}d\mu_Y
\le \bH^{-1}|H|_{L^p(Y)}\big|Q_t^*\big|_{L^{p/2}(Y)}^{(p-1)/2}
\\ & \ll \Big|\max_{0\le k\le at+1}|Q_k|\Big|_{L^{p/2}(Y)}^{(p-1)/2}
\ll (t\|v\|_\eta^2)^{(p-1)/2},
\end{align*}
and we obtain $|S_t|_{(p-1)/2}\ll t\|v\|_\eta^2$ as required.
\end{proof}

\begin{cor}
If $p\ge4$, then the formulas for $\Sigma$ and $E$ in Theorem~\ref{thm:WIP} hold.
\end{cor}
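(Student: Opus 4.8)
The plan is to upgrade the weak convergence supplied by Theorem~\ref{thm:CLT} to convergence of the relevant first moments, using the moment estimates of Theorem~\ref{thm:moment} to furnish uniform integrability. Recall from~\eqref{eq:vS} and the definitions preceding Theorem~\ref{thm:CLT} that $W_n(1)=n^{-1/2}v_n$ and $\BBW_n(1)=n^{-1}S_n$, so $\int_M W_n(1)\otimes W_n(1)\,d\mu_M=n^{-1}\int_M v_n\otimes v_n\,d\mu_M$ and $\int_M\BBW_n(1)\,d\mu_M=n^{-1}\int_M S_n\,d\mu_M$. Since $p\ge 4$, both parts of Theorem~\ref{thm:moment} apply: part~(a) gives $|v_n|_{p-1}\ll n^{1/2}\|v\|_\eta$, hence $|W_n(1)|_{p-1}\ll\|v\|_\eta$ uniformly in $n$, and then by the Cauchy--Schwarz inequality the entries $W_n^i(1)W_n^j(1)$ of $W_n(1)\otimes W_n(1)$ are bounded in $L^{(p-1)/2}$ uniformly in $n$; part~(b) gives $|S_n|_{(p-1)/2}\ll n\|v\|_\eta^2$, hence the entries of $\BBW_n(1)$ are likewise bounded in $L^{(p-1)/2}$ uniformly in $n$.

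Next I would invoke the elementary fact that a sequence bounded in $L^r$ for some $r>1$ is uniformly integrable. Since $p\ge4$ gives $(p-1)/2\ge\tfrac32>1$, both families $\{W_n(1)\otimes W_n(1)\}$ and $\{\BBW_n(1)\}$ are uniformly integrable on $(M,\mu_M)$. By Theorem~\ref{thm:CLT}, $(W_n,\BBW_n)\to_w(W,\BBW)$, and composing with the continuous evaluation-at-$t=1$ map yields $(W_n(1)\otimes W_n(1),\BBW_n(1))\to_d(W(1)\otimes W(1),\BBW(1))$. Convergence in distribution together with uniform integrability then gives
\[
\int_M W_n(1)\otimes W_n(1)\,d\mu_M\to\E\big(W(1)\otimes W(1)\big),\qquad
\int_M\BBW_n(1)\,d\mu_M\to\E\big(\BBW(1)\big).
\]

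Finally I would identify the two limits. As $W$ is $d$-dimensional Brownian motion with covariance $\Sigma$, $\E\big(W(1)\otimes W(1)\big)=\Sigma$. For the second limit I use the It\^o representation $\BBW(t)=\int_0^t W\otimes dW+Et$ obtained inside the proof of Theorem~\ref{thm:CLT} (rather than the Stratonovich form in its statement): $\int_0^1 W\otimes dW$ is a mean-zero martingale issuing from $0$, so $\E\big(\BBW(1)\big)=E$. Hence $\Sigma=\lim_n\int_M W_n(1)\otimes W_n(1)\,d\mu_M$ and $E=\lim_n\int_M\BBW_n(1)\,d\mu_M$, which are the formulas asserted in Theorem~\ref{thm:WIP} (now for the semiflow $T_t$ on $M$), and $D=E-\tfrac12\Sigma$ inherits the corresponding description. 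There is no substantial analytic obstacle beyond the moment bounds already in hand; the only points needing care are the bookkeeping tying $p\ge4$ (and $\inf H>0$) to the two parts of Theorem~\ref{thm:moment}, and the switch to the It\^o form of $\BBW$ when computing its mean.
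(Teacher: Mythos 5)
Your argument is correct and follows essentially the same route as the paper: weak convergence of $(W_n,\BBW_n)$ from Theorem~\ref{thm:CLT} (via the continuous mapping at $t=1$), uniform $L^q$-boundedness of $W_n(1)\otimes W_n(1)$ and $\BBW_n(1)$ for some $q>1$ from Theorem~\ref{thm:moment}(a),(b), hence uniform integrability and convergence of the means, identified as $\Sigma=\E(W(1)\otimes W(1))$ and $E=\E(\BBW(1))$ using the It\^o form $\BBW(t)=\int_0^t W\otimes dW+Et$. Your explicit remark that one must use the It\^o rather than Stratonovich representation when taking the expectation is a point the paper leaves implicit, but it is the same proof.
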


\begin{proof}
If $p>2$ then $W_n(1)=n^{-1/2}v_n\to W(1)$ by Theorem~\ref{thm:CLT}
and $|W_n(1)|_q$ is bounded for some $q>2$ by Theorem~\ref{thm:moment}(a).  Hence (cf.~Corollary~\ref{cor:conv})
$\int_M W_n(1)\otimes W_n(1)\,d\mu_M\to \E(W(1)\otimes W(1))=\Sigma$.
Similarly, provided that $p\ge4$, we have $\BBW_n(1)=n^{-1}S_n\to_w \BBW(1)$ and
$|\BBW_n(1)|_q$ bounded for some $q>1$ so
$\int_M \BBW_n(1)\,d\mu_M\to \E(\BBW(1))=E$.
\end{proof}

\section{Rearrangements and improved moment estimates}
\label{sec:rearrange}

The results on central limit theorems in Theorem~\ref{thm:CLT} are optimal; typically such results fail when $H$ is not $L^2$.
However, the moment estimates in Theorem~\ref{thm:moment} are far from optimal.  To rectify this, we introduce the idea of rearrangement.
 The strategy is to replace $\tau$ by a more unbounded return time in such a way that the flow return time $h$ becomes bounded, while keeping $H$ unchanged.

As in Section~\ref{sec:NUE}, $T_t:M\to M$ is a nonuniformly expanding semiflow with cross-section $X=Y^\tau$ and unbounded roof function $h:X\to\R^+$.
We assume that the
 induced roof function $H:Y\to\R^+$ is defined as in~\eqref{eq:H}, with $\inf H>0$, and satisfies~\eqref{eq:inf} and~\eqref{eq:holder}.
We assume also that
\begin{align} \label{eq:holder2}
d(T_tx,T_{t'}x)\le C |t-t'|^\eta
\quad\text{for $t,t'\ge0$, $x\in M$.}
\end{align}

The modified versions $\tilde\tau$ and $\tilde h$ of $\tau$ and $h$ are defined as follows.
First,
\[
\tilde\tau:Y\to\Z^+, \qquad
\tilde\tau|_{Y_j}=\big[\,\|1_{Y_j}H\|_\theta\,\big] +1.
\]
Using $F:Y\to Y$ and $\tilde\tau:Y\to\Z^+$, we construct the tower map
\[
f_\Delta:\Delta\to\Delta, \qquad \Delta=Y^{\tilde\tau},
\]
as in Section~\ref{sec:prelim}.  Now define
\[
\tilde h:\Delta\to\R^+, \qquad
\tilde h(y,\ell)=H(y)/\tilde \tau(y)
\quad\text{for $y\in Y_j$, $0\le\ell<\tilde \tau(y)$.}
\]
The key lemma is the following:
\begin{lemma}    \label{lem:key}
\begin{itemize}
\item[(a)] $H(y)=\sum_{\ell=0}^{\tilde \tau(y)-1}\tilde h(y,\ell)$ for all $y\in Y$.
\item[(b)]  $0<\inf\tilde h\le |\tilde h|_\infty\le1$ and
\[
|\tilde h(y,\ell)-\tilde h(y',\ell)|\le d_\theta(y,y')
\quad \text{for $y,y'\in Y_j$, $j\ge1$, $0\le\ell\le\tilde\tau(y)-1$.}
\]
\end{itemize}
\end{lemma}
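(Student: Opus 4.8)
The plan is to dispose of (a) immediately and then read off (b) from two elementary size estimates on $\tilde\tau$.

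Part (a) is a triviality: the right-hand side $\sum_{\ell=0}^{\tilde\tau(y)-1}\tilde h(y,\ell)$ is a sum of $\tilde\tau(y)$ identical terms, each equal to $H(y)/\tilde\tau(y)$, so it equals $H(y)$.

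For part (b) I would first record that, since $d_\theta\le1$, one has $\sup_{Y_j}H\le\inf_{Y_j}H+|1_{Y_j}H|_\theta$, so \eqref{eq:inf} gives
\[
|1_{Y_j}H|_\infty\le(1+C)\inf_{Y_j}H,
\qquad
\|1_{Y_j}H\|_\theta\le(1+2C)\inf_{Y_j}H .
\]
The upper bound $|\tilde h|_\infty\le1$ then follows because $\tilde\tau|_{Y_j}=[\|1_{Y_j}H\|_\theta]+1\ge\|1_{Y_j}H\|_\theta\ge|1_{Y_j}H|_\infty=\sup_{Y_j}H$, so $\tilde h(y,\ell)=H(y)/\tilde\tau(y)\le\sup_{Y_j}H/\tilde\tau|_{Y_j}\le1$. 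For the positive lower bound I would invoke the standing hypothesis $\inf H>0$ to absorb the additive $1$: since $1\le(\inf H)^{-1}\inf_{Y_j}H$,
\[
\tilde\tau|_{Y_j}\le\|1_{Y_j}H\|_\theta+1\le\big(1+2C+(\inf H)^{-1}\big)\inf_{Y_j}H\le\big(1+2C+(\inf H)^{-1}\big)H(y)
\]
for $y\in Y_j$, whence $\tilde h(y,\ell)=H(y)/\tilde\tau(y)\ge\big(1+2C+(\inf H)^{-1}\big)^{-1}>0$ uniformly in $(y,\ell)$. The Lipschitz estimate is then immediate: $\tilde\tau$ is constant, say $=N_j$, on $Y_j$, so for $y,y'\in Y_j$ one has $\tilde h(y,\ell)-\tilde h(y',\ell)=(H(y)-H(y'))/N_j$, and combining $|H(y)-H(y')|\le|1_{Y_j}H|_\theta\, d_\theta(y,y')$ with $N_j\ge\|1_{Y_j}H\|_\theta\ge|1_{Y_j}H|_\theta$ gives $|\tilde h(y,\ell)-\tilde h(y',\ell)|\le d_\theta(y,y')$ (the degenerate case $|1_{Y_j}H|_\theta=0$ being trivial, since then $H$ is constant on $Y_j$).

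There is no serious obstacle here; the one point that genuinely needs $\inf H>0$, rather than just \eqref{eq:inf}, is the uniform positivity of $\tilde h$, which could otherwise fail if $\inf_{Y_j}H$ were so small that the $+1$ in the definition of $\tilde\tau$ dominates it. Everything else is a mechanical consequence of \eqref{eq:inf} and the fact that $d_\theta\le1$.
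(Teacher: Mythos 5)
Your proof is correct and follows essentially the same route as the paper: part (a) by summing identical terms, the upper bound and Lipschitz estimate from $\tilde\tau|_{Y_j}=[\|1_{Y_j}H\|_\theta]+1\ge\|1_{Y_j}H\|_\theta$, and the lower bound by bounding $\tilde\tau|_{Y_j}\le(2C+1+(\inf H)^{-1})\inf_{Y_j}H$ using \eqref{eq:inf} and $\inf H>0$, which is exactly the paper's constant $C'$. No gaps.
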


\begin{proof}
Part (a) is immediate.
Also, $|\tilde h(y,\ell)-\tilde h(y',\ell)|
=|H(y)-H(y')|/\{[\|1_{Y_j}H\|_\theta]+1\}\le d_\theta(y,y')$
for $y,y'\in Y_j$, $j\ge1$, $0\le\ell\le\tilde\tau(y)-1$,
and similarly $|\tilde h|_\infty\le1$.

It remains to verify that $\tilde h$ is bounded below.
Let $y\in Y_j$, $0\le \ell< \tilde\tau(y)$.
Using~\eqref{eq:inf} and $\inf H>0$,
\begin{align*}
\SMALL \big[\,\|H1_{Y_j}\|_\theta\,\big] +1 & \le |1_{Y_j}H|_\infty+|1_{Y_j}H|_\theta+1
\\[.75ex] & \SMALL\le (2C+1)\inf_{Y_j}H+(\inf H)^{-1}\inf_{Y_j}H\le C'\inf_{Y_j}H,
\end{align*}
where $C'=2C+1+(\inf H)^{-1}$.
Hence $\tilde h(y,\ell)=H(y)/(\big[\,\|H1_{Y_j}\|_\theta\,\big] +1)\ge  1/C'$ as required.
\end{proof}

Since $\tilde h$ is bounded above and below, it is immediate from
Lemma~\ref{lem:key}(a) that
$\mu_Y(H>n)\approx \mu_Y(\tilde\tau>n)$.
As usual, we assume that $H$ (and $\tilde\tau$) is integrable.

As in Section~\ref{sec:prelim}, we
define the suspension semiflow $f_t:\Delta^{\tilde h}\to\Delta^{\tilde h}$
and ergodic invariant probability measures
$\mu_\Delta=\mu_Y^{\tilde\tau}$ on $\Delta$ and $\mu^{\tilde h}$ on $\Delta^{\tilde h}$.

Let $\tilde h_\ell(y)=\sum_{k=0}^{\ell-1}\tilde h(y,k)$ for $\ell\ge1$ and
define
\begin{align} \label{eq:pi}
\pi_\Delta:\Delta^{\tilde h}\to M, \qquad \pi_\Delta(y,\ell,u)=T_{u+\tilde h_\ell(y)}y.
\end{align}

By Lemma~\ref{lem:key}(a), we obtain
\begin{prop}  \label{prop:pi}
$\pi_\Delta$ is a measurable semiconjugacy from $(f_t,\Delta^{\tilde h})$ to $(T_t,M)$.
\end{prop}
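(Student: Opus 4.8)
The plan is to verify the three properties packaged into the phrase \emph{measurable semiconjugacy}: measurability of $\pi_\Delta$, well-definedness on the quotient $\Delta^{\tilde h}$, and the intertwining identity $\pi_\Delta\circ f_t=T_t\circ\pi_\Delta$ for $t\ge0$. Measurability is immediate, since $(y,\ell)\mapsto\tilde h_\ell(y)$ is a finite sum of measurable functions and $(x,s)\mapsto T_sx$ is measurable by the standing assumption on the semiflow, so $\pi_\Delta$ is a composition of measurable maps.

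For well-definedness I would check compatibility with the identification $(y,\ell,\tilde h(y,\ell))\sim(\hat f(y,\ell),0)$ defining $\Delta^{\tilde h}$. When $\ell\le\tilde\tau(y)-2$, both sides map to $T_{\tilde h_{\ell+1}(y)}y$ because $\tilde h_\ell(y)+\tilde h(y,\ell)=\tilde h_{\ell+1}(y)$. When $\ell=\tilde\tau(y)-1$, the left side is $T_{\tilde h_{\tilde\tau(y)}(y)}y=T_{H(y)}y$ by Lemma~\ref{lem:key}(a), while the right side is $T_0(Fy)=Fy$; these agree because $T_{H(y)}y=Fy$, which is exactly the relation underlying the fact, recorded in Section~\ref{sec:prelim}, that $\pi\colon Y^H\to M$, $\pi(y,s)=T_sy$, is a semiconjugacy. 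For the intertwining identity, if $t$ is small enough that no identification is crossed then $f_t(y,\ell,u)=(y,\ell,u+t)$, and both $\pi_\Delta(f_t(y,\ell,u))$ and $T_t\pi_\Delta(y,\ell,u)$ equal $T_{u+t+\tilde h_\ell(y)}y$ by the semigroup property of $T$; the general case follows by iterating across successive fibres using well-definedness, exactly as in the standard suspension argument of Section~\ref{sec:prelim}.

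Equivalently, and perhaps more cleanly, Lemma~\ref{lem:key}(a) says precisely that the flattening map $\rho(y,\ell,u)=(y,u+\tilde h_\ell(y))$ is a bijective measurable semiconjugacy from $(f_t,\Delta^{\tilde h})$ onto the suspension $(F_t,Y^H)$ of Section~\ref{sec:prelim} (its inverse reads off $\ell$ from the unique interval $[\tilde h_\ell(y),\tilde h_{\ell+1}(y))$ containing the height coordinate). Since $\pi_\Delta=\pi\circ\rho$ and $\pi$ is already known to be a semiconjugacy, so is $\pi_\Delta$. I do not anticipate a genuine obstacle: the only care required is the index bookkeeping in the well-definedness step and the appeal to $T_{H(y)}y=Fy$, both routine given the constructions in Section~\ref{sec:prelim} and Lemma~\ref{lem:key}.
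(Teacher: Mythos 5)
Your proof is correct and follows essentially the same route as the paper: checking that $\pi_\Delta$ respects the suspension identifications via $\tilde h_\ell(y)+\tilde h(y,\ell)=\tilde h_{\ell+1}(y)$, Lemma~\ref{lem:key}(a), and the relation $F=f^\tau=(T_h)^\tau=T_H$ on $Y$ (which the paper derives in one line rather than citing the preliminaries). Your alternative factorisation $\pi_\Delta=\pi\circ\rho$ through the flattening map onto $Y^H$ is just a clean repackaging of the same computation, not a genuinely different argument.
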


\begin{proof}
It suffices to show that $\pi_\Delta$ respects the identifications on $\Delta$ and $\Delta^{\tilde h}$.

Recall that
\[
\SMALL H(y)=h_{\tau(y)}(y)=\sum_{\ell=0}^{\tau(y)-1}h(f^\ell y), \qquad  Fy=f^{\tau(y)}y,\qquad f(y)=T_{h(y)}y,
\]
for $y\in Y$.  It follows that
\[
F=f^\tau=(T_h)^\tau=T_H \quad\mbox{on $Y$}.
\]

First, we verify that $\pi_\Delta(y,\tilde\tau(y),u)=\pi_\Delta(Fy,0,u)$.  By
Lemma~\ref{lem:key}(a),
\[
\pi_\Delta(y,\tilde\tau(y),u) =T_{u+\tilde h_{\tilde\tau(y)}(y)}y
=T_uT_{H(y)}y=T_uFy=\pi_\Delta(Fy,0,u).
\]

Second, we verify that $\pi_\Delta(p,\tilde h(p))=\pi_\Delta(f_\Delta p,0)$.
Write $p=(y,\ell)\in\Delta$.  If $\ell\le\tau(y)-2$, then
\begin{align*}
\pi_\Delta(p,\tilde h(p))
& =\pi_\Delta(y,\ell,\tilde h(y,\ell))=T_{\tilde h(y,\ell)+\tilde h_\ell(y)}y \\ & = T_{\tilde h_{\ell+1}(y)}y
=\pi_\Delta(y,\ell+1,0)=\pi_\Delta(f_\Delta(y,\ell),0).
\end{align*}
Also, if $\ell=\tau(y)-1$, then $\tilde h_{\ell+1}(y)=H(y)$ and $f_\Delta(y,\ell)=(Fy,0)$, so
\begin{align*}
\pi_\Delta(p,\tilde h(p))=T_{H(y)}y=Fy=\pi_\Delta(Fy,0,0)=\pi_\Delta(f_\Delta(y,\tau(y)-1),0)=\pi_\Delta(f_\Delta p,0),
\end{align*}
as required.
\end{proof}

\begin{rmk}  We refer to the procedure in this section as ``rearranging the tower'' because the obvious suspension flow to consider is $(Y^\tau)^h$ as in Subsection~\ref{sec:prelim}.
As evidenced by Theorem~\ref{thm:momentDelta} below, the rearranged suspension $\Delta^{\tilde h}=(Y^{\tilde\tau})^{\tilde h}$ has significant advantages.

This procedure is analogous to introducing transparent walls,
a technique well-known in the theory of billiards (see eg.~\cite{KramliSimanyiSzasz90}). When crossing a transparent wall,
the velocity of the billiard particle is not modified, nonetheless these events can be regarded as (fake) collisions,
which makes the length of the free flight bounded. However, in the present paper, we implement this idea at the more
abstract level of the Young tower, which has certain advantages.
In particular, there is no need for the fake collision set to be identified with a subset of the ambient phase-space.
\end{rmk}

\subsection{Martingale-coboundary decomposition on $\Delta$}

Let $v\in C^\eta(M,\R^d)$ and define
$\tilde v=v\circ\pi_\Delta:\Delta^{\tilde h}\to\R^d$.
Instead of inducing on $Y$, we induce on $\Delta$,
defining the induced observable
\[
\SMALL \tV:\Delta\to\R^d, \qquad
\tV(y,\ell)=\int_0^{\tilde h(y,\ell)}\tilde v(y,\ell,u)\,du.
\]
Clearly, $|\tilde v|_\infty=|v|_\infty$ and
$|\tV|_\infty\le |\tilde h|_\infty|\tilde v|_\infty\le |v|_\infty$.

\begin{prop} \label{prop:tV}
Let $\theta_1=\theta^{\eta^2}$.
There is a constant $C>0$ such that
\[
|\tV(y,\ell)- \tV(y',\ell)| \le C \|v\|_{\eta}\tilde \tau(y)^\eta d_{\theta_1}(y,y') \quad \text{for $y,y'\in Y_j$, $j\ge1$, $0\le\ell\le \tilde\tau(y)-1$,}
\]
for all $v\in C^\eta(M,\R^d)$.
\end{prop}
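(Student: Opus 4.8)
\emph{Proof plan.} The plan is to pass from the tower $\Delta=Y^{\tilde\tau}$ back to the flow using the explicit form of $\pi_\Delta$. Since $\tilde h(y,\ell)=H(y)/\tilde\tau(y)$ does not depend on $\ell$, we have $\tilde h_\ell(y)=\ell\,H(y)/\tilde\tau(y)$, and after the substitution $w=u+\tilde h_\ell(y)$,
\[
\tV(y,\ell)=\int_{\ell a}^{(\ell+1)a}v(T_wy)\,dw=\int_0^{a}v(T_{\ell a+r}y)\,dr,\qquad a=a(y):=\tilde h(y,\ell)=\frac{H(y)}{\tilde\tau(y)}.
\]
For $y,y'\in Y_j$ one has $\tilde\tau(y)=\tilde\tau(y')=:N$, so the asserted estimate is symmetric in $y,y'$ and we may assume $a:=a(y)\ge a':=a(y')$. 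Then
\[
\tV(y,\ell)-\tV(y',\ell)=\int_{a'}^{a}v(T_{\ell a+r}y)\,dr+\int_0^{a'}\bigl(v(T_{\ell a+r}y)-v(T_{\ell a'+r}y')\bigr)\,dr,
\]
and the first integral has norm at most $|v|_\infty(a-a')=|v|_\infty|\tilde h(y,\ell)-\tilde h(y',\ell)|\le|v|_\infty d_\theta(y,y')$ by Lemma~\ref{lem:key}(b), which is $\le|v|_\infty N^\eta d_{\theta_1}(y,y')$ since $\theta\le\theta^{\eta^2}=\theta_1$ and $N\ge1$.

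For the second integral the point is to estimate $d(T_{\ell a+r}y,T_{\ell a'+r}y')$ for $r\in[0,a']$; the only subtlety is that $t:=\ell a+r$ may exceed $H(y')$, so \eqref{eq:holder} does not apply at time $t$ directly. I would insert the ``safe'' time $t_0:=\min\{t,H(y')\}$: it satisfies $t_0\le H(y')\le H(y)$ and $t_0\le t\le(\ell+1)a\le Na=H(y)$ (using $\ell+1\le N$), so $t_0\in[0,H(y)]\cap[0,H(y')]$ and \eqref{eq:holder} yields $d(T_{t_0}y,T_{t_0}y')\le Cd_\theta(y,y')$. The time mismatches are linear in $N$ by Lemma~\ref{lem:key}(b): $t-t_0=(t-H(y'))_+\le(\ell+1)a-Na'\le N(a-a')\le N d_\theta(y,y')$ and $\ell a-\ell a'=\ell(a-a')\le N d_\theta(y,y')$. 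Hence, using \eqref{eq:holder2} and the triangle inequality through $T_{t_0}y$, $T_{t_0}y'$ and $T_{\ell a+r}y'$,
\[
d(T_{\ell a+r}y,T_{\ell a'+r}y')\le d(T_ty,T_{t_0}y)+d(T_{t_0}y,T_{t_0}y')+d(T_{t_0}y',T_ty')+d(T_{\ell a+r}y',T_{\ell a'+r}y')\ll N^\eta d_\theta(y,y')^\eta,
\]
where the last step also uses $d_\theta(y,y')\le d_\theta(y,y')^\eta$.

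It then remains to apply $\eta$-H\"older continuity of $v$: using $(x+x')^\eta\le x^\eta+(x')^\eta$,
\[
\bigl|v(T_{\ell a+r}y)-v(T_{\ell a'+r}y')\bigr|\le|v|_\eta\,d(T_{\ell a+r}y,T_{\ell a'+r}y')^\eta\ll|v|_\eta\,N^{\eta^2}\bigl(d_\theta(y,y')^\eta\bigr)^\eta=|v|_\eta\,N^{\eta^2}d_{\theta_1}(y,y'),
\]
since $\bigl(d_\theta(y,y')^\eta\bigr)^\eta=\theta^{\eta^2 s(y,y')}=d_{\theta_1}(y,y')$ --- this is precisely where the exponent $\theta_1=\theta^{\eta^2}$ is forced. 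Integrating over $r\in[0,a']$ and using $a'=\tilde h(y',\ell)\le1$ (Lemma~\ref{lem:key}(b)) bounds the second integral by $\ll|v|_\eta N^{\eta^2}d_{\theta_1}(y,y')\le|v|_\eta\,\tilde\tau(y)^\eta d_{\theta_1}(y,y')$ (as $\eta^2\le\eta$, $N\ge1$); adding the two contributions gives the claim with a constant multiple of $\|v\|_\eta$. The sole genuine obstacle is the bookkeeping for times $t>H(y')$, handled by the splitting at $t_0$; everything else is a routine combination of \eqref{eq:holder}, \eqref{eq:holder2} and Lemma~\ref{lem:key}(b), and the harmless factor $\tilde\tau(y)^\eta$ simply reflects that the accumulated time mismatch $\ell(a-a')$ grows linearly in $\ell\le\tilde\tau(y)$.
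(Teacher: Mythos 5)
Your proposal is correct and follows essentially the same route as the paper: split $\tV(y,\ell)-\tV(y',\ell)$ into a length-mismatch term and an integrand-difference term, then combine Lemma~\ref{lem:key}(b) with~\eqref{eq:holder2} (for the accumulated time shift $\ell|a-a'|\le\tilde\tau(y)\,d_\theta(y,y')$) and~\eqref{eq:holder} (for the same-time comparison), which is exactly where the exponent $\theta_1=\theta^{\eta^2}$ and the factor $\tilde\tau(y)^\eta$ arise in the paper's proof. The only difference is cosmetic: you insert the auxiliary time $t_0=\min\{t,H(y')\}$, whereas the paper compares at the $y'$-based time $\tilde h_\ell(y')+u\le H(y')\le H(y)$, which makes~\eqref{eq:holder} applicable directly.
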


\begin{proof}
Let $(y,\ell,u),\,(y',\ell,u)\in\Delta^{\tilde h}$ with $y,y'\in Y_j$, $j\ge1$.
Then
\begin{align*}
|\tilde v(y,\ell,u)-\tilde v(y',\ell, & u)|  =
|v(T_{\tilde h_\ell(y)+u}y)- v(T_{\tilde h_\ell(y')+u}y')|
\le |v|_\eta d(T_{\tilde h_\ell(y)+u}y,T_{\tilde h_\ell(y')+u}y')^\eta.
\end{align*}
By Lemma~\ref{lem:key}(b),
$|\tilde h_\ell(y)-\tilde h_\ell(y')|
\le \sum_{k=0}^{\tilde \tau(y)-1} |\tilde h(y,k)-\tilde h(y',k)|\le
\tilde \tau(y) d_\theta(y,y')$,
so
\[
d(T_{\tilde h_\ell(y)+u}y,T_{\tilde h_\ell(y')+u}y)\le
C|\tilde h_\ell(y)-\tilde h_\ell(y')|^\eta
\le C\tilde \tau(y)^\eta d_\theta(y,y')^\eta,
\]
by~\eqref{eq:holder2}.
Also, by~\eqref{eq:holder},
$d(T_{\tilde h_\ell(y')+u}y,T_{\tilde h_\ell(y')+u}y') \le C  \theta^{s(y,y')}$.
Hence,
\[
|\tilde v(y,\ell,u)-\tilde v(y',\ell,u)|\le 2C|v|_{\eta} \tilde \tau(y)^\eta d_{\theta_1}(y,y').
\]

Next,
\begin{align*}
|\tV & (y,\ell)  - \tV(y',\ell)|  \le \int_{\tilde h(y',\ell)}^{\tilde h(y,\ell)}|\tilde v(y,\ell,u)|\,du
+\int_0^{\tilde h(y',\ell)}|\tilde v(y,\ell,u)-\tilde v(y',\ell,u)|\,du \\
& \le |\tilde h(y,\ell)-\tilde h(y',\ell)||v|_\infty+|\tilde h|_\infty
2C|v|_{\eta} \tilde\tau(y)^\eta d_{\theta_1}(y,y')
\ll \|v\|_{\eta} \tilde\tau(y)^\eta d_{\theta_1}(y,y'),
\end{align*}
by Lemma~\ref{lem:key}(b).
\end{proof}

Let $L:L^1(\Delta)\to L^1(\Delta)$ denote the transfer operator corresponding to
$f_\Delta:\Delta\to\Delta$ (so
$\int_\Delta Lv\,w\,d\mu_\Delta=\int_\Delta v\,w\circ f_\Delta\,d\mu_\Delta$ for
all $v\in L^1(\Delta)$, $w\in L^\infty(\Delta)$).

\begin{prop} \label{prop:mDelta}
Assume that $\mu_Y(H>n)=O(n^{-\beta})$ for some $\beta>1$.
Let $v\in C^\eta_0(M,\R^d)$.
  For any $p<\beta-1$, there exists
$\tm,\tchi\in L^p(\Delta,\R^d)$ such that
$\tV=\tm+\tchi\circ f_\Delta-\tchi$, $\tm\in\ker L$.

Moreover, there exists a constant $C>0$ such that
$|\tm|_p\le C\|v\|_\eta$ and $|\tchi|_p\le C\|v\|_\eta$ for all $v\in C_0^\eta(M,\R^d)$.
\end{prop}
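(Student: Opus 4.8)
The plan is to lift the base-level martingale--coboundary decomposition to the rearranged tower $\Delta$. First I note that $\mu_Y(H>n)=O(n^{-\beta})$ gives $H\in L^q(Y)$ for every $q<\beta$, so Proposition~\ref{prop:m} applied to the Gibbs-Markov base map $F:Y\to Y$ and the observable $v$ yields $V=m+\chi\circ F-\chi$ with $m\in\ker P$, $m\in L^q(Y)$ and $|m|_q\ll\|v\|_\eta$ for every $q<\beta$, and $\chi\in L^\infty(Y)$ with $|\chi|_\infty\ll\|v\|_\eta$. The key structural observation is that $V$ is exactly the observable induced by $\tV$ over the return time $\tilde\tau$: from $\tV(y,\ell)=\int_{\tilde h_\ell(y)}^{\tilde h_{\ell+1}(y)}v(T_sy)\,ds$ and Lemma~\ref{lem:key}(a), telescoping over $0\le\ell\le\tilde\tau(y)-1$ gives $\sum_{\ell=0}^{\tilde\tau(y)-1}\tV(y,\ell)=\int_0^{H(y)}v(T_sy)\,ds=V(y)$. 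Moreover $|\tV|_\infty\le|v|_\infty$ because $|\tilde h|_\infty\le1$, so integrability of $\tV$ itself is not at issue; the whole task is to control the coboundary, whose size is dictated by $H$.

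I would then define $\tm$ and $\tchi$ by hand. Writing $V_\ell(y)=\sum_{k=0}^{\ell-1}\tV(y,k)=\int_0^{\tilde h_\ell(y)}v(T_sy)\,ds$, set
\[
\tchi(y,\ell)=\chi(y)+V_\ell(y),\qquad
\tm(y,\ell)=\begin{cases}0,&0\le\ell\le\tilde\tau(y)-2,\\ m(y),&\ell=\tilde\tau(y)-1.\end{cases}
\]
The identity $\tV=\tm+\tchi\circ f_\Delta-\tchi$ then follows by inspection: on the non-top levels $f_\Delta$ is the vertical shift and $\tchi(y,\ell+1)-\tchi(y,\ell)=V_{\ell+1}(y)-V_\ell(y)=\tV(y,\ell)$, while on the top level $f_\Delta(y,\tilde\tau(y)-1)=(Fy,0)$, $\tchi(Fy,0)=\chi(Fy)$, $\tchi(y,\tilde\tau(y)-1)=\chi(y)+V(y)-\tV(y,\tilde\tau(y)-1)$, and the identity reduces to $m=V-\chi\circ F+\chi$. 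To see $\tm\in\ker L$, note that $\tm$ is supported on the top level, so $L\tm$ vanishes on every non-top level; on the base, a one-line duality computation using $f_\Delta(y,\tilde\tau(y)-1)=(Fy,0)$ and $\int_Y\phi\,(w\circ F)\,d\mu_Y=\int_Y(P\phi)w\,d\mu_Y$ identifies $L\tm$ with $Pm=0$.

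It remains to estimate $\tm$ and $\tchi$ in $L^p(\Delta)$. Since $\tm$ is carried by the top level and equals $m$ there, $\int_\Delta|\tm|^p\,d\mu_\Delta\ll\int_Y|m|^p\,d\mu_Y$, so $|\tm|_p\ll\|v\|_\eta$ for every $p<\beta$. For $\tchi$, uniformly in $0\le\ell\le\tilde\tau(y)-1$ we have $|\tchi(y,\ell)|\le|\chi|_\infty+|v|_\infty\tilde h_\ell(y)\le|\chi|_\infty+|v|_\infty H(y)\ll\|v\|_\eta H(y)$, using $\inf H>0$. Integrating over the tower and using $\tilde\tau|_{Y_j}=[\|1_{Y_j}H\|_\theta]+1\ll\inf_{Y_j}H\le H$ (the bound extracted in the proof of Lemma~\ref{lem:key}(b) from \eqref{eq:inf} and $\inf H>0$),
\[
\int_\Delta|\tchi|^p\,d\mu_\Delta\ll\|v\|_\eta^p\int_Y\tilde\tau(y)\,H(y)^p\,d\mu_Y\ll\|v\|_\eta^p\int_YH^{p+1}\,d\mu_Y,
\]
which is finite precisely when $p+1<\beta$ since $\mu_Y(H>n)=O(n^{-\beta})$. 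Hence $|\tchi|_p\ll\|v\|_\eta$ for all $p<\beta-1$, with implied constant depending only on $p$, $\beta$ and the tower data, and the proof is complete.

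The only genuine difficulty is structural rather than computational: in the polynomial-tail regime $L$ has no spectral gap, so the geometric series $\chi=\sum_{n\ge1}P^nV$ that produces the base coboundary in Proposition~\ref{prop:m} has no analogue on $\Delta$ --- one cannot obtain $\tchi$ by iterating $L$ and must instead construct it directly from the base decomposition, the verification $\tm\in\ker L$ then resting on the identification of $L$ on top-level functions with $P$ on $Y$. The other thing to track carefully is the weight $\tilde\tau\asymp\inf_{Y_j}H$ that the tower contributes to $\int_\Delta|\tchi|^p$: it costs one power of integrability, which is exactly why the statement gives $p<\beta-1$ rather than $p<\beta$.
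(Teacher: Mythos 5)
Your proof is correct, but it is genuinely different from the one in the paper. The paper constructs $\tchi$ intrinsically on $\Delta$ as $\tchi=\sum_{n\ge1}L^n\tV$: it truncates $\tV$ on the levels with $\tilde\tau>n$, estimates the truncation in the anisotropic norm of \cite{Young99}, invokes the polynomial decay of correlations $O(n^{-(\beta-1)})$ for the tower map to get $|L^n\tV|_1\ll\|v\|_\eta\,n^{-(\beta-1-\eta)}$, and then interpolates against $|\tV|_\infty$ to sum the series in $L^p$ for $p<\beta-1$ (after shrinking $\eta$). You instead lift the base decomposition $V=m+\chi\circ F-\chi$ of Proposition~\ref{prop:m} to $\Delta$ by hand, putting $\tm$ on the top level and absorbing the partial integrals $V_\ell$ into $\tchi$; your verification of the cocycle identity, of $\tm\in\ker L$ via the identification of $L$ on top-level functions with $P$, and of the $L^p$ bounds (the weight $\tilde\tau\ll\inf_{Y_j}H$ costing exactly one power, whence $p<\beta-1$) are all correct, and the standing assumptions of Section~\ref{sec:rearrange} (in particular $\inf H>0$, \eqref{eq:inf}, \eqref{eq:holder}) do license the use of Proposition~\ref{prop:m}. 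Your route is more elementary — no appeal to \cite{Young99} or to the uniform boundedness/\cite{KKMapp} point the paper needs — and it even yields $\tm\in L^p$ for all $p<\beta$, with only $\tchi$ restricted to $p<\beta-1$; since a decomposition with $\tm\in\ker L$ is unique up to an additive constant in $\tchi$ (by ergodicity), your decomposition agrees with the paper's. What the paper's approach buys is a construction that does not pass through the first-return system at all and, more importantly, establishes en route the quantitative decay $|L^n\tV|_p\to0$, which is in the spirit of how the decomposition is used downstream (cf.\ the proof of Theorem~\ref{thm:momentDelta} via~\cite{MN08}). One small correction to your closing remark: it is not true that ``one cannot obtain $\tchi$ by iterating $L$'' — that is precisely what the paper does; the point is only that convergence of $\sum_n L^n\tV$ rests on polynomial decay estimates rather than a spectral gap.
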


\begin{proof}
By Lemma~\ref{lem:key}, $\mu_Y(\tilde\tau>n)=O(n^{-\beta})$.
Note that
\begin{align} \label{eq:taunote}
\mu_\Delta\{(y,\ell)\in\Delta:\tilde\tau(y)>n\}
=({\SMALL\int}_Y\tilde\tau\,d\mu_Y)^{-1}{\SMALL\int}_Y\tilde\tau 1_{\{\tilde\tau>n\}}\,d\mu_Y=O(n^{-(\beta-1)}).
\end{align}

Without loss, $d=1$.
Shrinking $\eta\in(0,1]$ if necessary, we can suppose that $p<\beta-1-\eta$.

Fix $n\ge1$ and let
$\tV_n(y,\ell)=\begin{cases} \tV(y,\ell) & \tilde \tau(y)\le n
\\ 0 & \tilde \tau(y)\ge n+1 \end{cases}$.
Then $|\tV_n|_\infty\le |\tV|_\infty\le |v|_\infty$.
Also, for $y,y'\in Y_j$, $j\ge1$, it follows from Proposition~\ref{prop:tV} that
\[
|\tV_n(y,\ell)-\tV_n(y',\ell)|\le 1_{\{\tau(y)\le n\}}|\tV(y,\ell)-\tV(y,\ell')|
\le C\|v\|_{\eta}\, n^\eta \,d_{\theta_1}(y,y').
\]

This means that $\|\tV_n\|\le C\|v\|_{\eta}\, n^\eta$
in the function space in~\cite{Young99}.
Since $\mu_Y(\tilde\tau>n)=O(n^{-\beta})$, it follows from~\cite[Theorem~3]{Young99} that
\[
\SMALL \big|\int_\Delta \tV_n\,w\circ f_\Delta^n \,d\mu_{\Delta}-
\int_\Delta \tV_n \,d\mu_{\Delta}
\int_\Delta w \,d\mu_{\Delta}\big|\ll
\|\tV_n\| |w|_\infty \,n^{-(\beta-1)}
\le C \|v\|_{\eta} |w|_\infty n^{-(\beta-1-\eta)},
\]
for all $w\in L^\infty(\Delta)$, $n\ge1$.
(The dependence on $\|\tV_n\|$ is not stated explicitly in \cite[Theorem~3]{Young99} but follows by a standard argument using the uniform boundedness principle.  Alternatively, see~\cite{KKMapp} for a direct argument.)
Also
\[
\SMALL \big|\int_\Delta (\tV-\tV_n)\,w\circ f_\Delta^n \,d\mu_{\Delta}\big|
\le |\tV-\tV_n|_\infty|w|_\infty\,\mu_{\Delta}\{(y,\ell):\tilde\tau(y)>n\}
\ll |v|_\infty|w|_\infty n^{-(\beta-1)},
\]
by~\eqref{eq:taunote},
and similarly $|\int_\Delta \tV_n\,d\mu_{\Delta}|=|\int_\Delta (\tV-\tV_n) \,d\mu_{\Delta}|\ll |v|_\infty n^{-(\beta-1)}$
(where we have used that $\int_\Delta \tV\,d\mu_{\Delta}=0$).  Hence
\[
\SMALL \big|\int_\Delta L^n\tV\,w \,d\mu_{\Delta} \big|=
\big|\int_\Delta \tV\,w\circ f_\Delta^n \,d\mu_{\Delta} \big|\ll
\|v\|_{\eta} |w|_\infty \,n^{-(\beta-1-\eta)}.
\]
Taking $w=\sgn L^n\tV$, we obtain $|L^n\tV|_1\ll
\|v\|_{\eta} \,n^{-(\beta-1-\eta)}$.
Hence
\[
|L^n\tV|_p^p\le
|L^n\tV|_\infty^{p-1}|L^n\tV|_1\le |\tV|_\infty^{p-1}|L^n\tV|_1\ll |v|_\infty^{p-1}\|v\|_{\eta}\,n^{-(\beta-1-\eta)},
\]
so $\tchi=\sum_{n=1}^\infty L^n\tV$ converges in $L^p$.
Finally, $L\tm=L\tV-\tchi+L\tchi=0$.
\end{proof}

\subsection{Improved moment estimates}
\label{sec:momentDelta}

Given $v\in C^\eta(M,\R^d)$, define
$v_t$ and $S_t$ as in~\eqref{eq:vS}.
A crucial difference from Section~\ref{sec:moment} is that $\tV$ is $L^\infty$ even though, as before, $\tm$ is $L^p$.
This enables the use of Rio's inequality~\cite{MerlevedePeligradUtev06,Rio00}
following~\cite{MN08}.
We continue to assume that $\mu_Y(H>n)=O(n^{-\beta})$ and that $\inf H>0$.

\begin{thm} \label{thm:momentDelta}
(a) Let $2<p<\beta$.  There is a constant $C>0$ such that
\[
|v_t|_{2(p-1)}\le
Ct^{1/2}\|v\|_{\eta}
\qquad\text{for all $v\in C^{\eta}_0(M,\R^d)$, $t>0$.}
\]
\noindent(b) Let $4<p<\beta$.  There is a constant $C>0$ such that
\[
|S_t|_{2(p-1)/3}\le Ct\|v\|_{\eta}^2
\qquad\text{for all $v\in C^{\eta}_0(M,\R^d)$, $t>0$.}
\]
\end{thm}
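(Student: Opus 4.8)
The plan is to mirror the strategy of Theorem~\ref{thm:moment}, but working on the rearranged tower $\Delta^{\tilde h}$ and exploiting the decisive structural improvement from Proposition~\ref{prop:mDelta}: the induced observable $\tV$ is bounded, while the martingale part $\tm$ still lies in $L^p$. First I would reduce, exactly as in Theorem~\ref{thm:moment}(a), to estimating $|\tilde v_t|_{L^{2(p-1)}(\Delta^{\tilde h})}$ where $\tilde v=v\circ\pi_\Delta$; since $\tilde h$ is bounded below (Lemma~\ref{lem:key}(b)), the lap number $N(t)$ on $\Delta^{\tilde h}$ satisfies $|N(t)|_\infty\le at+1$, and the telescoping identity $\tilde v_t=\tV_{N(t)}+\psi\circ f_t-\psi$ holds with $\psi$ now built from $\tilde h$-pieces and still bounded by $|v|_\infty H$. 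The coboundary terms contribute $\ll|v|_\infty$ using $H\in L^\beta\subset L^{2(p-1)}$ (for $p<\beta$ this needs $2(p-1)<\beta$, i.e.\ one should take $p$ slightly below $\beta/2+1$; in any case the exponents are chosen so this holds), so everything comes down to controlling $|\max_{k\le n}|\tV_k||_{2(p-1)}$ on $\Delta$.

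The key point is how to get this maximal bound at exponent $2(p-1)$ rather than just $p$. Here I would invoke Rio's inequality~\cite{Rio00,MerlevedePeligradUtev06} in the form used by~\cite{MN08}: for the martingale-coboundary decomposition $\tV=\tm+\tchi\circ f_\Delta-\tchi$ with $\tm\in\ker L$, one has $\big|\sum_{j=0}^{n-1}\tV\circ f_\Delta^j\big|_q^2\ll n\,|\tV|_\infty^{q-2}\,\||\tV|_q^2+\dots\|$-type bounds; more precisely, because $\tV$ is bounded and $\tm=\tV-(\tchi\circ f_\Delta-\tchi)$ with $\tchi\in L^p$, the Rio/Peligrad–Utev–Wu machinery upgrades the naive $L^p$ Burkholder bound to an $L^{2(p-1)}$ bound $\big|\max_{k\le n}|\tV_k|\big|_{2(p-1)}\ll n^{1/2}\|v\|_\eta$. (This is precisely the mechanism by which boundedness of the summand buys a factor of roughly two in the integrability exponent; the decay rate $\mu_Y(H>n)=O(n^{-\beta})$ feeds in through $\tchi\in L^p$ for every $p<\beta-1$.) Feeding this into the Hölder estimate
\[
\int_{\Delta^{\tilde h}}|\tV_{N(t)}|^{2(p-1)}\,d\mu^{\tilde h}\le \bH^{-1}|H|_{L^{q'}}\,\big|\sup_{u}|\tV_{N(t)}|\big|^{2(p-1)}_{L^{q''}(\Delta)}
\]
with a suitable conjugate pair, and using $H\in L^\beta$, yields $|v_t|_{2(p-1)}\ll t^{1/2}\|v\|_\eta$, giving part~(a).

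For part~(b) I would follow the second half of the proof of Theorem~\ref{thm:moment}: expand $S_t$ via the same algebraic identity
\[
S_t=Q_{N(t)}-\hat v\otimes\psi+\tV_{N(t)}\otimes(\psi\circ f_t)+\int_0^t(\psi\otimes\hat v)\circ f_s\,ds-\psi\otimes\tilde v_t,
\]
where now $Q_k=\sum_{0\le i<j<k}(\tV\circ f_\Delta^i)\otimes(\tV\circ f_\Delta^j)$ on $\Delta$. Writing $\tV=\tm+\tchi\circ f_\Delta-\tchi$ splits $\max_k|Q_k|$ into a purely quadratic-martingale piece $A_1=\max_k|\sum_{i<j<k}(\tm\circ f_\Delta^i)\otimes(\tm\circ f_\Delta^j)|$ and a remainder $A_2$ controlled in $L^{p/2}$ by $n\,|\tchi|_\infty$-type terms — except that now $\tchi$ is only $L^p$, not $L^\infty$, so one must instead bound $A_2$ in a lower $L^r$ using $|\tchi|_p$ and $|\tV|_\infty$; this is the one place where the argument genuinely departs from Section~\ref{sec:moment}. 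For $A_1$, one iterates Burkholder/Rio twice exactly as before: the inner sum $X_{i,k}=\big(\sum_j \tm\otimes(\tm\circ f_\Delta^j)\big)\circ f_\Delta^i$ is a reverse-martingale-difference sequence ($L^{i+1}X_{i,k}=0$ since $L\tm=0$), so $|A_1|\ll n^{1/2}\max_i|X_{i,k}|\ll n^{1/2}\cdot n^{1/2}|\tm|_p^2=n|\tm|_p^2$ at exponent $p/2$, and the boundedness of $\tV$ again lets Rio's inequality push this up to exponent $2(p-1)/3$. Then the same Hölder-against-$H$ step transfers $|\max_k|Q_k||$ from $\Delta$ to $\Delta^{\tilde h}$ and hence to $M$, yielding $|S_t|_{2(p-1)/3}\ll t\|v\|_\eta^2$.

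The main obstacle I anticipate is the careful bookkeeping of Hölder conjugates and Rio-inequality exponents so that $H\in L^\beta$ exactly suffices and the final exponents come out as the stated $2(p-1)$ and $2(p-1)/3$ — in particular making sure the quadratic-variation and coboundary remainders (the $A_2$ term, where $\tchi\notin L^\infty$) do not degrade the exponent below the target. The martingale structure, the maximal inequalities, and the transfer from $\Delta$ to $M$ are all routine given the earlier sections; it is the optimisation of integrability indices, and the substitution of the $L^p$ bound on $\tchi$ for the $L^\infty$ bound available in Section~\ref{sec:moment}, that requires care.
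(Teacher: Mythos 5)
Your tower-level step is essentially the paper's: boundedness of $\tV$ together with the decomposition $\tV=\tm+\tchi\circ f_\Delta-\tchi$ with $\tm,\tchi\in L^{p-1}(\Delta)$ (Proposition~\ref{prop:mDelta}), fed into the Rio-type argument of~\cite{MN08} plus a Serfling maximal inequality, does give $\big|\max_{k\le n}|\tV_k|\big|_{L^{2(p-1)}(\Delta)}\ll n^{1/2}\|v\|_\eta$, and for part~(b) the paper simply invokes~\cite[Proposition~7.1]{KM16} in place of your redone $A_1/A_2$ splitting (note you leave the $A_2$ estimate with $\tchi\notin L^\infty$, i.e.\ the actual content of that proposition, as an acknowledged but unresolved difficulty). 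The genuine gap is in the transfer from $\Delta$ to $\Delta^{\tilde h}$ and hence to the flow, where you miss the whole point of the rearrangement: $\tilde h$ is bounded \emph{above} by $1$ (Lemma~\ref{lem:key}(b)), so the boundary pieces satisfy $|\tilde v_t-\tV_{N(t)}|\le 2|\tilde v|_\infty|\tilde h|_\infty\le 2|v|_\infty$ pointwise, and integrating a function of $y$ alone over the fibers of $\Delta^{\tilde h}$ costs only the factor $(\int_\Delta\tilde h\,d\mu_\Delta)^{-1}|\tilde h|_\infty$. No $H$-weight and no H\"older step appear at all.

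Your substitute for this is incorrect on two counts. First, you bound the $\psi$-terms by $|v|_\infty H$ and appeal to ``$H\in L^\beta\subset L^{2(p-1)}$''; on a probability space that inclusion requires $2(p-1)\le\beta$, and since the theorem is asserted for all $2<p<\beta$, the range $p\in(\beta/2+1,\beta)$ gives $2(p-1)>\beta$, where $H\notin L^{2(p-1)}$ in general -- so ``the exponents are chosen so this holds'' is false, and your argument does not cover the stated range. Second, the proposed H\"older estimate $\int_{\Delta^{\tilde h}}|\tV_{N(t)}|^{2(p-1)}\,d\mu^{\tilde h}\le \bH^{-1}|H|_{L^{q'}}\big|\sup_u|\tV_{N(t)}|\big|^{2(p-1)}_{L^{q''}(\Delta)}$ cannot deliver the final exponent $2(p-1)$: it needs $q''>2(p-1)$, i.e.\ a maximal bound at an exponent strictly larger than the one Rio's inequality provides. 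That H\"older-against-$H$ device is exactly the source of the exponent loss in Theorem~\ref{thm:moment} that Section~\ref{sec:rearrange} is designed to eliminate; re-importing it here either fails or degrades the exponent below the target. The same remark applies to your transfer step in part~(b), which in the paper again uses only that $\tilde h$ is bounded above and below (via~\cite[Proposition~7.5]{KM16}), not any integrability of $H$.
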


\begin{proof}
As in the proof of Theorem~\ref{thm:moment}, we may suppose that $t\ge1$.
Recall that $\tilde v=v\circ\pi_\Delta:\Delta^{\tilde h}\to\R^d$.
To prove the result, it is equivalent to show that
$|\tilde v_t|_{L^{2(p-1)}(\Delta^{\tilde h})}\ll t^{1/2}\|v\|_\eta$
and $|\tilde S_t|_{L^{2(p-1)/3}(\Delta^{\tilde h})}\ll t\|v\|_\eta^2$,
where $\tilde v_t=\int_0^t\tilde v\circ f_s\,ds$ and
$\tilde S_t=
\int_0^t \int_0^s (\tilde v\circ f_r)\otimes(\tilde v\circ f_s)\,dr\,ds$.
\\[.75ex]
(a) We follow the arguments in~\cite[Theorem~3.1]{MN08} and~\cite[Lemma~4.1]{MTorok12}.
Recall that $\tV\in L^\infty(\Delta,\R^d)$.
By Proposition~\ref{prop:mDelta},
$\tV=\tm+\tchi\circ f_\Delta-\tchi$ where $\tm,\tchi$ in $L^{p-1}(\Delta,\R^d)$ and $\tm\in\ker L$.
(Unfortunately, $p-1$ here corresponds to $p$ in~\cite{MN08} and Proposition~\ref{prop:mDelta}, but our notation facilitates comparison of Theorems~\ref{thm:moment} and~\ref{thm:momentDelta}.)
This is the same as the decomposition $\phi=\psi+\chi\circ T-\chi$ in the proof of~\cite[Theorem~3.1]{MN08}.  Hence it follows just as in the proof of~\cite[Theorem~3.1]{MN08} that
$|\tV_n|_{L^{2(p-1)}(\Delta)}\ll n^{1/2}\|v\|_\eta$ where $\tV_n=\sum_{j=0}^{n-1}\tV\circ f_\Delta^j$.
By~\cite[Corollary~B1]{Serfling70} (with $\nu=2(p-1)$ and $\delta=1$), we obtain
$\big|\max_{0\le k\le n}|\tV_k|\big|_{L^{2(p-1)}(\Delta)}\ll n^{1/2}\|v\|_\eta$.

For $t>0$, define the lap number $N(t):\Delta\to\N$ to be the largest integer $n\ge0$ such that $\tilde h_n\le t$.
By Lemma~\ref{lem:key}(b),
\[
\SMALL |\tilde v_t(x,u)-\tV_{N(t)(x)}(x)|\le 2|\tilde v|_\infty|\tilde h|_\infty
\le 2|v|_\infty
\quad\text{for $(x,u)\in\Delta^{\tilde h}$.}
\]
Also $|N(t)|_\infty\le at+1$ where $a=\inf \tilde h^{-1}>0$ by Lemma~\ref{lem:key}(b).
Hence
\begin{align*}
|\tilde v_t|_{L^{2(p-1)}(\Delta^{\tilde h})} & \le
2|v|_\infty + \Big(\int_{\Delta^{\tilde h}}|\tV_{N(t)(x)}|^{2(p-1)}\,d\mu^{\tilde h}(x,u)\Big)^{1/(2(p-1))}
\\ & \le 2|v|_\infty + ({\SMALL\int_\Delta}\tilde h\,d\mu_\Delta)^{-1/(2(p-1))} |\tilde h|_\infty^{1/(2(p-1))} |\tV_{N(t)}|_{L^{2(p-1)}(\Delta)}
\\ & \ll |v|_\infty + \Big|\max_{k\le at+1}|\tV_k|\Big|_{L^{2(p-1)}(\Delta)}
\ll |v|_\infty+ (at+1)^{1/2}\|v\|_\eta\ll t^{1/2}\|v\|_\eta.
\end{align*}
This completes the proof that $|v_t|_{2(p-1)}\ll t^{1/2}\|v\|_\eta$.
\\[.75ex]
(b)
Again, we have
$\tV=\tm+\tchi\circ f_\Delta-\tchi$ where
$\tV\in L^\infty(\Delta,\R^d)$, $\tm,\,\tchi$ in $L^{p-1}(\Delta,\R^d)$ and $\tm\in\ker L$.
This means that the hypotheses
 of~\cite[Proposition~7.1]{KM16} are satisfied and
\[
\Big|\max_{0\le k\le n}\big|\sum_{i=0}^{k-1}\sum_{j=0}^{i-1}(\tV\circ f_\Delta^i)\otimes(\tV\circ f_\Delta^j)\big|\Big|_{2(p-1)/3}\ll n\|v\|_\eta^2.
\]
Using that $\tilde h$ is bounded above and below, this moment estimate for discrete time again passes over to continuous time, see~\cite[Proposition~7.5]{KM16}.
\end{proof}

\begin{rmk}  The moment estimate in Theorem~\ref{thm:momentDelta}(a)
is essentially optimal, see~\cite{M09b,MN08}.
The estimate in Theorem~\ref{thm:momentDelta}(b) is the same as the one obtained in~\cite{KM16} and already significantly improves the one in Theorem~\ref{thm:moment}(b), but we expect that the optimal estimate is
$|S_t|_{p-1}\le Ct\|v\|_\eta^2$.  (Such an estimate is obtained for certain nonuniformly expanding/hyperbolic systems in~\cite{KKMprep}.)
\end{rmk}

\section{Nonuniformly hyperbolic exponentially contracting flows}
\label{sec:exp}

In this section, we prove our main results  for a class of nonuniformly hyperbolic flows.  As shown in Section~\ref{sec:Lorenz}, this class of flows includes Lorenz attractors and singular hyperbolic attractors.

\subsection{Standing assumptions for flows}
\label{sec:stand}

Let $(M,d)$ be a bounded metric space with Borel subsets $Y\subset X\subset M$.  Suppose that $T_t:M\to M$, $h:X\to\R^+$, $f=T_h:X\to X$ are as
in Subsection~\ref{sec:prelim}.
Also, let $\tau:Y\to\Z^+$ and $F=f^\tau:Y\to Y$ be as
in Subsection~\ref{sec:prelim}.

We suppose that $\mu_Y$ is a Borel ergodic $F$-invariant probability measure on $Y$ and that $\tau$ is integrable on $(Y,\mu_Y)$.
Then we obtain
an ergodic $f$-invariant probability measure $\mu_X$ on $X$.
We assume that $h$ is integrable on $(X,\mu_X)$ and hence obtain
an ergodic $T_t$-invariant probability measure $\mu_M$ on $M$.

Let $\cW^s$ be a measurable partition of $Y$ consisting of {\em stable leaves}.
For each $y\in Y$, let $W^s_y$ denote the stable leaf containing $y$.  We require that $F(W^s_y)\subset W^s_{Fy}$ for all $y\in Y$ and that $\tau$ is constant on stable leaves.

Let $\bY$ denote the space obtained from $Y$ after quotienting by $\cW^s$ with natural projection $\bpi:Y\to\bY$.  We assume that the quotient map $\bF:\bY\to\bY$ is Gibbs-Markov, with partition $\{\bY\!_j\}$ and ergodic invariant probability measure $\bmu=\bpi_*\mu_Y$.  Let $s(y,y')$ denote the separation time on $\bY$.

Let $Y_j=\bpi^{-1}\bY\!_j$; these form a partition of $Y$ and each $Y_j$ is a union of stable leaves.  The separation time extends to $Y$, setting $s(y,y')=s(\bpi y,\bpi y')$ for $y,y'\in Y$.

\subsection{Exponential contraction along stable leaves}
\label{subsec:exp}

Assume the set up in Subsection~\ref{sec:stand}.
Define the induced roof function $H:Y\to\R^+$ as in~\eqref{eq:H}.  We suppose throughout that $H$ is integrable and that $\inf H>0$.

We require in addition that there is a measurable subset $\hY\subset Y$ such that for every $y\in Y$ there is a unique $\hat y\in\hY\cap W^s_y$.
Assume that there are constants $C\ge1$, $\gamma_0\in(0,1)$,
$\eta\in(0,1]$,
such that
\begin{alignat}{2}
\label{eq:inf_exp}
|H(y)-H(y')| & \le C({\SMALL\inf_{Y_j}}H)\gamma_0^{s(y,y')}
&& \quad\text{for $y,y'\in \hY\cap Y_j$, $j\ge1$,}
\\
\label{eq:holder_exp}
 d(T_ty,T_{t'}y) & \le C |t-t'|^\eta
&& \quad\text{for $y\in \hY$, $t,t'\ge0$,}
\\
\label{eq:Wsexp}
d(T_ty,T_ty') & \le C\gamma_0^t && \quad\text{for $y\in Y$, $y'\in W^s_y$, $t\ge0$}, \\ \label{eq:hY}
d(T_ty,T_ty') & \le C\gamma_0^{s(y,y')-J(t)(y)} && \quad\text{for $y,y'\in \hY\cap Y_j$, $j\ge1$, $t\ge0$},
\end{alignat}
where $J(t)(y)$ is the largest integer $n\ge0$ such that $H_n(y)\le t$
(as usual $H_n=\sum_{j=0}^{n-1}H\circ F^j$).
These conditions are standard
except for the relatively restrictive condition~\eqref{eq:Wsexp} which represents exponential contraction of stable leaves under the full flow (and not just under the induced map $F$).

\subsection{Martingale-coboundary decomposition}

Given $v\in C^\eta(M,\R^d)$, define the induced observable $V(y)=\int_0^{H(y)}v(T_uy)\,du$ on $Y$.
Also, define
$\chi_1,\,\hV:Y\to\R^d$,
\[
\SMALL
\chi_1(y)=\int_0^\infty (v(T_t\hat y)-v(T_ty))\,dt,
\qquad
\hV=V+\chi_1-\chi_1\circ F.
\]
Finally, define
$g:\bY\to\R$ by $g|_{\bY\!_j} = {\SMALL\inf}_{Y_j}H$, $j\ge1$.

\begin{prop} \label{prop:sinai}
Suppose that $H\in L^p(Y)$ for some $p\ge1$.
There exist constants $C>0$, $\theta\in(0,1)$ such that for all $v\in C^\eta(M,\R^d)$,
\begin{itemize}
\item[(a)] $\chi_1\in L^\infty(Y,\R^d)$ and $|\chi_1|_\infty\le C|v|_\eta$,
\item[(b)] $\hV=\bV\circ\bpi$ for some $\bV\in L^p(\bY,\R^d)$.
Moreover, $|\bV|\le C\|v\|_\eta\, g$ and
$|\bV(y)-\bV(y')|\le C\|v\|_\eta\, g(y)d_\theta(y,y')$ for
$y,y'\in\bY\!_j$, $j\ge1$.
\end{itemize}
\end{prop}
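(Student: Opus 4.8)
The proof is the classical ``Sinai trick'': the cocycle $\chi_1$ is tailored so that the corrected observable $\hV=V+\chi_1-\chi_1\circ F$ is constant on stable leaves, hence descends to an observable $\bV$ on the Gibbs--Markov quotient $\bY$, to which the discrete-time martingale--coboundary machinery of Section~\ref{sec:NUE} then applies exactly as Proposition~\ref{prop:VY} feeds it there. Part~(a) is immediate: since $\hat y\in W^s_y$, \eqref{eq:Wsexp} gives $d(T_t\hat y,T_ty)\le C\gamma_0^t$, so $|v(T_t\hat y)-v(T_ty)|\le|v|_\eta(C\gamma_0^t)^\eta$, and integrating over $t\ge0$ yields $|\chi_1|_\infty\le C'|v|_\eta$; measurability of $\chi_1$ is clear.

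For part~(b) I first show $\hV$ is constant on stable leaves. Using $Fy=T_{H(y)}y$ (valid because $F=f^\tau=(T_h)^\tau=T_H$) and the substitution $s=t+H(y)$ in $\chi_1(Fy)$, the terms involving the orbit of $y$ cancel, leaving $\hV(y)=\int_0^\infty\bigl(v(T_t\hat y)-v(T_t\widehat{Fy})\bigr)\,dt$; the integral converges because $F(W^s_y)\subset W^s_{Fy}$ puts $F\hat y$ and $\widehat{Fy}$ on one stable leaf, so by \eqref{eq:Wsexp} the integrand differs from $v(T_t\hat y)-v(T_tF\hat y)$ by $O(\gamma_0^{\eta t})$, the former integrating to $V(\hat y)$. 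Since $\hat y$ and $\widehat{Fy}$ depend only on the leaf $W^s_y$, $\hV$ is constant on stable leaves, so $\hV=\bV\circ\bpi$ for a measurable $\bV:\bY\to\R^d$. Taking a representative $z\in\hY$ of a fibre, for which $\hat z=z$ and hence $\chi_1(z)=0$, gives the working identity $\bV(\bpi z)=V(z)-\chi_1(Fz)$.

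From this identity, $|\chi_1(Fz)|\le C|v|_\eta$ by part~(a), and $|V(z)|\le|v|_\infty H(z)$ with $H(z)\ll\inf_{Y_j}H$ for $z\in\hY\cap Y_j$ by \eqref{eq:inf_exp}; since $\inf H>0$ the $\chi_1$-term is absorbed, giving $|\bV|\le C\|v\|_\eta\,g$. Moreover $\int_\bY|\bV|^p\,d\bmu=\int_Y|\hV|^p\,d\mu_Y\ll|v|_\infty^p|H|_p^p+|v|_\eta^p<\infty$ since $H\in L^p$, so $\bV\in L^p(\bY,\R^d)$. For the H\"older bound fix $z,z'\in\hY\cap Y_j$; when $s(z,z')$ is bounded the claim follows from $|\bV|\le C\|v\|_\eta g$ and $g$ being constant on $\bY\!_j$, so assume $s:=s(z,z')$ large and, without loss, $H(z)\ge H(z')$. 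In $V(z)-V(z')$ the overhang $\int_{H(z')}^{H(z)}v(T_tz)\,dt$ is $\ll|v|_\infty(H(z)-H(z'))\ll|v|_\infty(\inf_{Y_j}H)\gamma_0^{s}$ by \eqref{eq:inf_exp}, while on $[0,H(z')]$ one has $J(t)(z)=0$, so \eqref{eq:hY} gives $d(T_tz,T_tz')\le C\gamma_0^{s}$ and that part is $\ll|v|_\eta(\inf_{Y_j}H)\gamma_0^{\eta s}$.

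The main obstacle is the matching H\"older control of $\chi_1(Fz)-\chi_1(Fz')$, since $2|\chi_1|_\infty$ from part~(a) carries no decay in $s$. Writing $\chi_1(Fz)=\int_0^\infty\bigl(v(T_t\widehat{Fz})-v(T_{t+H(z)}z)\bigr)\,dt$ (and similarly for $z'$), I would split the difference at a horizon $T_*=\tfrac12 s\inf H$. On $[0,T_*]$ the lap numbers obey $J(\cdot)\le T_*/\inf H=s/2$, so \eqref{eq:hY} applied to $(z,z')$ and to $(\widehat{Fz},\widehat{Fz'})$ (using $s(\widehat{Fz},\widehat{Fz'})=s-1$), combined with \eqref{eq:holder_exp} to absorb the time mismatch $|H(z)-H(z')|\ll(\inf_{Y_j}H)\gamma_0^{s}$, bounds the integrand by $\ll\|v\|_\eta\bigl(1+(\inf_{Y_j}H)^\eta\bigr)\gamma_0^{\eta s/2}$; on $[T_*,\infty)$ one instead pairs $v(T_{t+H(z)}z)=v(T_tFz)$ with $v(T_t\widehat{Fz})$ (a common stable leaf) and uses \eqref{eq:Wsexp} to bound the integrand by $\ll|v|_\eta\gamma_0^{\eta t}$, with tail $\ll|v|_\eta\gamma_0^{\eta T_*}$. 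Both contributions are $\ll\|v\|_\eta(\inf_{Y_j}H)\theta^{s}$ for a suitable $\theta\in(0,1)$ --- here $\inf H>0$ is used repeatedly, to trade factors $(\inf_{Y_j}H)^\eta$ for $\inf_{Y_j}H$ and to pick $\theta$ with $\gamma_0^{\eta c}\le\theta$ for the finitely many constants $c$ that appear --- and together with the estimate on $V(z)-V(z')$ this gives $|\bV(\bpi z)-\bV(\bpi z')|\le C\|v\|_\eta\,g(\bpi z)\,d_\theta(\bpi z,\bpi z')$, completing part~(b).
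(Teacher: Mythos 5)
Your proposal is correct and follows essentially the same route as the paper: the identity showing $\hV$ is constant on stable leaves (the paper's $\hV=V(\hat y)+\int_0^\infty(v(T_tF\hat y)-v(T_t\widehat{Fy}))\,dt$, which is your working identity evaluated at the representative $z\in\hY$ where $\chi_1(z)=0$), the bound $|\bV|\ll\|v\|_\eta\,g$ via \eqref{eq:inf_exp}, and a H\"older estimate obtained by splitting at a horizon proportional to $s(z,z')$, using \eqref{eq:Wsexp} for the tail and \eqref{eq:hY}, \eqref{eq:inf_exp}, \eqref{eq:holder_exp} for the head, trading constants via $\inf H>0$ exactly as the paper does with $q=\eta^{-1}+(\inf H)^{-1}$. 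Working only at representatives in $\hY$ is a mild streamlining (it removes the paper's two tail terms $A_M(y)$, $A_M(y')$) but not a different method; your only looseness, treating $\int_0^\infty(v(T_t\hat y)-v(T_tF\hat y))\,dt$ as converging to $V(\hat y)$ without a truncation argument, is at the same level of detail as the paper's ``a computation shows''.
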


\begin{proof}
(a) Let $\gamma_1=\gamma_0^\eta$.  By~\eqref{eq:Wsexp},
\[
\SMALL |\chi_1(y)|\le |v|_\eta \int_0^\infty d(T_t\hat y,T_ty)^\eta\,dt
\le C|v|_\eta \int_0^\infty \gamma_1^t\,dt\ll |v|_\eta,
\]
so $|\chi_1|_\infty\ll |v|_\eta$.
\\
(b)  A computation shows that
\[
\SMALL \hV=V(\hat y)+\int_0^\infty (v(T_tF\hat y)-v(T_t\widehat{Fy}))\,dt.
\]
In particular, $\hV$ is constant along stable leaves and has the desired factorisation $\hV=\bV\circ\bpi$.
It is immediate from the definition of $\hV$ that
$|\hV|\le |V|+2|\chi_1|_\infty\le |v|_\infty H+2|\chi_1|_\infty\ll \|v\|_\eta\,H$.
Hence $|\bV|\ll \|v\|_\eta\, g$ and $\bV\in L^p(\bY,\R^d)$.

Next, let $y,y'\in Y$ and set $M=s(y,y')/q$ where $q>1$ is to be specified.  Then
\[
|\hV(y)-\hV(y')|\le |V(\hat y)-V(\hat y')|+A_M(y)+A_M(y')+B_M(F\hat y,F\hat y')+B_M(\widehat{Fy},\widehat{Fy'}),
\]
where
\begin{align*}
A_M(z) & = \SMALL\int_M^\infty |v(T_tF\hat z)-v(T_t\widehat{Fz})|\,dt, \qquad
B_M(z,z')  = \SMALL\int_0^M |v(T_tz)-v(T_tz')|\,dt .
\end{align*}
The same calculation as in (a) shows that
$A_M(z)\ll |v|_\eta \gamma_1^M$ for $z=y,y'$.

Next, let $y,y'\in\hY\cap Y_j$.  By~\eqref{eq:inf_exp},~\eqref{eq:holder_exp} and~\eqref{eq:hY},
\begin{align*}
d(T_tF\hat y,T_t & F\hat y') =
d(T_tFy,T_tFy') =
d(T_{t+H(y)}y,T_{t+H(y')}y') \\
& \le  d(T_{t+H(y)}y,T_{t+H(y)}y')
+ d(T_{t+H(y)}y',T_{t+H(y')}y') \\
& \le C\gamma_0^{s(y,y')-J(t+H(y))(y)} +|H(y)-H(y')|^\eta
 \ll ({\SMALL\inf}_{Y_j}H)\gamma_1^{s(y,y')-J(t+H(y))(y)}.
\end{align*}
Hence, setting $\gamma_2=\gamma_1^{\eta}$,
\begin{align*}
B_M(F\hat y,F\hat y') & \le
|v|_\eta \int_0^M d(T_tF\hat y,T_tF\hat y')^\eta\,dt
\ll |v|_\eta\, M({\SMALL \inf}_{Y_j}H) \gamma_2^{s(y,y')-J(M+H(y))(y)}.
\end{align*}
Now $J(t)\le t/\inf H$ and $J(t+H(y))(y)=1+J(t)$.
Hence
\begin{align*}
B_M(F\hat y,F\hat y')
& \ll |v|_\eta\, M({\SMALL \inf}_{Y_j}H) \gamma_2^{s(y,y')-(1+M/\inf H)}
\\ & \ll |v|_\eta ({\SMALL \inf}_{Y_j}H)s(y,y')\gamma_2^{(1-1/(q\inf H))s(y,y')}.
\end{align*}
A much simpler calculation shows that
\[
B_M(\widehat{Fy},\widehat{Fy'}) \le C |v|_\eta M \gamma_1^{s(Fy,Fy')-M/\inf H}
\ll |v|_\eta s(y,y')\gamma_1^{(1-1/(q\inf H))s(y,y')}.
\]
Finally, proceeding as in the proof of Proposition~\ref{prop:VY} and using~\eqref{eq:inf_exp} and~\eqref{eq:hY},
\begin{align*}
|V(\hat y)-V(\hat y')| & \le |v|_\infty|H(\hat y)-H(\hat y')|
+C|v|_\eta \min\{H(\hat y),H(\hat y')\}\gamma_1^{s(y,y')-1}
\\ & \ll \|v\|_\eta ({\SMALL\inf}_{Y_j}H)\gamma_1^{s(y,y')}.
\end{align*}

Altogether, we have shown that
$|\hV(y)-\hV(y')|\ll \|v\|_\eta ({\SMALL\inf}_{Y_j}H) (\gamma_1^{s(y,y')/q}+s(y,y')\gamma_2^{(1-1/(q\inf H))s(y,y')})$.
Taking $q=\eta^{-1}+( \inf H)^{-1}$ and $\theta\in(\gamma_1^{1/q},1)$,
we obtain the desired result.
\end{proof}

\begin{cor} \label{cor:m_exp}
Let $v\in C^\eta_0(M,\R^d)$ and suppose that $H\in L^p(Y)$ for some $p\ge1$.
Then there exists
$\bm\in L^p(\bY,\R^d)$, $\chi_1\in L^\infty(Y,\R^d)$ such that
$V=\bm\circ\bpi+\chi_1\circ F-\chi_1$ and $\bm\in\ker P$.

Moreover, there exists $C>0$ such that
$|\bm|_p\le C\|v\|_\eta$ and
$|\chi_1|_\infty\le C\|v\|_\eta$ for all $v\in C^\eta_0(M,\R^d)$.
\end{cor}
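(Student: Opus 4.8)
The plan is to combine Proposition~\ref{prop:sinai} with the Gibbs--Markov martingale--coboundary argument of Section~\ref{sec:NUE}, now carried out on the quotient system $(\bY,\bF,\bmu)$ rather than on $(Y,F,\mu_Y)$. The role of Proposition~\ref{prop:sinai} is exactly to replace the induced observable $V$, which need not be constant on stable leaves, by the cohomologous observable $\hV=V+\chi_1-\chi_1\circ F=\bV\circ\bpi$, which is; once this reduction is made, the remaining argument is the one already used in the proof of Proposition~\ref{prop:m}, applied to $\bF$.

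First I would record the mean-zero property. Expressing $\int_Y V\,d\mu_Y$ on the suspension $Y^H$ and using $\pi_*\mu^H=\mu_M$ together with $v\in C^\eta_0(M,\R^d)$ gives $\int_Y V\,d\mu_Y=\bH\int_M v\,d\mu_M=0$; since $\hV-V$ is an $F$-coboundary, $\int_Y\hV\,d\mu_Y=0$ and hence $\int_{\bY}\bV\,d\bmu=0$ as $\bmu=\bpi_*\mu_Y$. I would then invoke Proposition~\ref{prop:sinai}: it yields $\chi_1\in L^\infty(Y,\R^d)$ with $|\chi_1|_\infty\ll|v|_\eta$, and an $L^p(\bY,\R^d)$ factor $\bV$ of $\hV$ with $|\bV|\le C\|v\|_\eta\,g$ and $|\bV(y)-\bV(y')|\le C\|v\|_\eta\,g(y)\,d_\theta(y,y')$ on each $\bY_j$, where $g$ is constant on $\bY_j$ and equal to $\inf_{Y_j}H$. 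These are precisely bounds of the form $|1_{\bY_j}\bV|_\infty\le\inf_{\bY_j}g_1$ and $|1_{\bY_j}\bV|_\theta\le\inf_{\bY_j}g_1$ with $g_1=C\|v\|_\eta\,g$ (integrable because $0\le g\circ\bpi\le H$), so Proposition~\ref{prop:GM}(b), applied to $\bF$, gives $\|P\bV\|_\theta\ll\|v\|_\eta\,|H|_1\ll\|v\|_\eta$.

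From here I would copy the argument of Proposition~\ref{prop:m} (working componentwise): Proposition~\ref{prop:GM}(a), applied to $P\bV$, gives $\|P^n\bV\|_\theta\ll\gamma^{n-1}\|v\|_\eta$ for $n\ge1$, so $\bar\chi:=\sum_{n\ge1}P^n\bV$ converges in $F_\theta(\bY)$ with $|\bar\chi|_\infty\ll\|v\|_\eta$, and $\bm:=\bV-\bar\chi\circ\bF+\bar\chi\in L^p(\bY,\R^d)$ satisfies $|\bm|_p\le|\bV|_p+2|\bar\chi|_\infty\ll\|v\|_\eta$ and $P\bm=P\bV-\bar\chi+P\bar\chi=0$. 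Finally I would pull back along $\bpi$, using $\bpi\circ F=\bF\circ\bpi$:
\[
\hV=\bV\circ\bpi=\bm\circ\bpi+(\bar\chi\circ\bpi)\circ F-\bar\chi\circ\bpi,
\]
and combine with $V=\hV-\chi_1+\chi_1\circ F$ to obtain $V=\bm\circ\bpi+\chi\circ F-\chi$ with $\chi:=\chi_1+\bar\chi\circ\bpi\in L^\infty(Y,\R^d)$, $|\chi|_\infty\ll\|v\|_\eta$; renaming $\chi$ as $\chi_1$ gives the stated identity, and $\bm\in\ker P$ by construction.

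I do not expect a serious obstacle: the content is essentially bookkeeping once Proposition~\ref{prop:sinai} is available. The one place to be careful is that the Lipschitz bound for $\bV$ in Proposition~\ref{prop:sinai}(b) carries the weight $g(y)$, and that this feeds Proposition~\ref{prop:GM}(b) in the required $\inf_{\bY_j}$-form --- which is fine precisely because $g$ is constant on partition elements, leaving no gap between $\sup_{\bY_j}g$ and $\inf_{\bY_j}g$. A secondary point is that the two $L^\infty$ coboundaries (the Sinai coboundary $\chi_1$ on $Y$ and the Gibbs--Markov coboundary $\bar\chi\circ\bpi$ pulled back from $\bY$) sum to one $L^\infty$ coboundary for $F$ without disturbing the kernel condition, which is immediate since $\bm\circ\bpi$ inherits membership in $\ker P$ directly from $P\bm=0$.
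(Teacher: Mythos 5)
Your proposal is correct and follows essentially the same route as the paper: reduce to the quotient via Proposition~\ref{prop:sinai}, apply Proposition~\ref{prop:GM}(a),(b) to $\bV$ as in Proposition~\ref{prop:m} to get $\bV=\bm+\chi_2\circ\bF-\chi_2$ with $\bm\in\ker P$, and set $\chi=\chi_1+\chi_2\circ\bpi$. Your explicit check that $\int_{\bY}\bV\,d\bmu=0$ is a detail the paper leaves implicit, but otherwise the arguments coincide.
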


\begin{proof}
By Proposition~\ref{prop:sinai}, $V=\hV+\chi_1\circ F-\chi_1$
where $\chi_1\in L^\infty(Y,\R^d)$ and
$\hV=\bV\circ\bpi$ satisfies the estimates in
Proposition~\ref{prop:sinai}(b).
By Proposition~\ref{prop:GM}(b),
$\|P\bV\|_\theta\ll |g|_1\le |H|_1$.
Using Proposition~\ref{prop:GM}(a), we can proceed as in Proposition~\ref{prop:m}, defining
$\chi_2=\sum_{n=1}^\infty P^n\bV\in L^\infty(\bY,\R^d)$.
Then $\bV=\bm+\chi_2\circ \bF-\chi_2$ where $\bm\in\ker P$.
Now define $\chi=\chi_1+\chi_2\circ\bpi$.
\end{proof}

\begin{cor} \label{cor:Burk_exp}
Suppose that $H\in L^p(Y)$ for some $p\ge2$.  Then there is a constant $C>0$ such that
$\big|\max_{1\le k\le n}|V_k|\big|_p\le Cn^{1/2}\|v\|_\eta$
for all $v\in C^\eta_0(M,\R^d)$, $n\ge1$.
\end{cor}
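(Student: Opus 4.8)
The plan is to mirror the proof of Corollary~\ref{cor:Burk}, now using the martingale--coboundary decomposition on the Gibbs--Markov factor $\bY$ supplied by Corollary~\ref{cor:m_exp} in place of the one on $Y$ used there. Fix $v\in C_0^\eta(M,\R^d)$ and write
\[
V=\bm\circ\bpi+\chi_1\circ F-\chi_1,
\]
with $\bm\in L^p(\bY,\R^d)$, $\chi_1\in L^\infty(Y,\R^d)$, $\bm\in\ker P$, and $|\bm|_p\ll\|v\|_\eta$, $|\chi_1|_\infty\ll\|v\|_\eta$; here $P$ and the Koopman operator $U$ now refer to the Gibbs--Markov map $\bF:\bY\to\bY$ of Subsection~\ref{sec:stand}. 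Since $\bpi$ is a semiconjugacy, $\bpi\circ F=\bF\circ\bpi$, so iterating the decomposition and telescoping the coboundary terms gives, for every $n\ge1$,
\[
V_n=\bm_n\circ\bpi+\chi_1\circ F^n-\chi_1,\qquad \bm_n=\sum_{j=0}^{n-1}\bm\circ\bF^j .
\]

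Next I would run Burkholder's inequality on $\bY$. Because $P\bm=0$, we have $\E(\bm\mid\bF^{-1}\overbar{\cB})=UP\bm=0$, where $\overbar{\cB}$ is the underlying $\sigma$-algebra on $\bY$, so $\{\bm\circ\bF^k\}_{k\ge0}$ is a reverse martingale difference sequence for the decreasing filtration $\{\bF^{-k}\overbar{\cB}\}_{k\ge0}$ on $(\bY,\bmu)$. Since $p\ge2$, Burkholder's inequality (exactly as in the proof of Corollary~\ref{cor:Burk}, via~\cite[Theorem~3.2 combined with (1.4)]{Burkholder73}) yields
\[
\big|\max_{1\le k\le n}|\bm_k|\big|_{L^p(\bmu)}\le\frac{p}{p-1}|\bm_n|_p\ll\Big|\Big(\sum_{k=1}^n\bm^2\circ\bF^k\Big)^{1/2}\Big|_p\le n^{1/2}|\bm|_p\ll n^{1/2}\|v\|_\eta .
\]

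Finally, since $\bmu=\bpi_*\mu_Y$, the map $w\mapsto w\circ\bpi$ is an $L^p$-isometry on $\bpi$-measurable functions, whence $\big|\max_{1\le k\le n}|\bm_k\circ\bpi|\big|_{L^p(\mu_Y)}=\big|\max_{1\le k\le n}|\bm_k|\big|_{L^p(\bmu)}\ll n^{1/2}\|v\|_\eta$. Combining this with $|\chi_1|_\infty\ll\|v\|_\eta$ and subadditivity of the maximum,
\[
\big|\max_{1\le k\le n}|V_k|\big|_p\le\big|\max_{1\le k\le n}|\bm_k\circ\bpi|\big|_p+2|\chi_1|_\infty\ll n^{1/2}\|v\|_\eta ,
\]
as required. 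The only step needing genuine care is verifying that the decomposition iterates through $\bpi$ to give the stated formula for $V_n$; the martingale estimate is then completely parallel to the nonuniformly expanding case, and no analytic input is needed beyond Corollary~\ref{cor:m_exp}.
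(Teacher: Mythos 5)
Your argument is correct and is essentially the paper's own proof: the paper simply says ``as in the proof of Corollary~\ref{cor:Burk}'', meaning exactly your steps --- apply Corollary~\ref{cor:m_exp}, note $\big|\max_{1\le k\le n}|(\bm\circ\bpi)_k|\big|_{L^p(Y)}=\big|\max_{1\le k\le n}|\bm_k|\big|_{L^p(\bY)}$ since $\bmu=\bpi_*\mu_Y$, run Burkholder/Doob for the reverse martingale differences $\bm\circ\bF^k$ on $\bY$, and absorb the bounded coboundary $\chi_1$. You have just written out the details the paper leaves implicit, so there is nothing to add.
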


\begin{proof}  As in the proof of Corollary~\ref{cor:Burk},
$\big|\max_{1\le k\le n}|(\bar m\circ\bpi)_k|\big|_{L^p(Y)}=
\big|\max_{1\le k\le n}|\bar m_k|\big|_{L^p(\bY)}\ll n^{1/2}$.
The result follows.
\end{proof}

\subsection{Central limit theorems and moment estimates}

We continue to assume the set up in Subsections~\ref{sec:stand} and~\ref{subsec:exp}, including that $\inf H>0$.

\begin{thm}   \label{thm:CLT_exp}
Suppose that $H\in L^2(Y)$.
Let $v\in C^\eta_0(M,\R^d)$.
Then the statement and conclusion of Theorem~\ref{thm:CLT} holds.
That is, $(W_n,\BBW_n)\to_w (W,\BBW)$ in $C([0,\infty),\R^d\times\R^{d\times d})$
on $(M,\mu_M)$, where $W_n$, $\BBW_n$, $W$ and $\BBW$ are defined as in
Theorem~\ref{thm:CLT}.
\end{thm}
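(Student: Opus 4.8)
The plan is to run the proof of Theorem~\ref{thm:CLT} essentially line for line, the only change being in its first step: there the Gibbs--Markov structure sat on $Y$, whereas here it sits on the quotient $\bY$ with map $\bF$. Recall that the proof of Theorem~\ref{thm:CLT} splits into (i) establishing an iterated WIP for the induced observable $V$ on $(Y,\mu_Y,F)$ from a martingale--coboundary decomposition of $V$ via \cite[Theorem~4.3]{KM16}, and (ii) transporting that iterated WIP up through the suspension $Y^H$ to $v$ on $M$ via \cite[Theorem~6.1]{KM16}. Step~(ii) uses only the pointwise bounds $|V|\le|v|_\infty H$ and $|\psi|\le|v|_\infty H$ with $\psi(y,u)=\int_0^u v(T_sy)\,ds$, the ergodic theorem, and the maximal moment estimate of Corollary~\ref{cor:Burk}; none of this cares whether $Y$ or $\bY$ carries the hyperbolic structure, and Corollary~\ref{cor:Burk} is replaced here by the verbatim-identical Corollary~\ref{cor:Burk_exp}. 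So step~(ii) carries over unchanged, and the whole proof reduces to reproving step~(i) in the present setting.

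For step~(i), I would invoke Corollary~\ref{cor:m_exp} (together with the auxiliary function $\chi_2=\sum_{n\ge1}P^n\bV\in L^\infty(\bY,\R^d)$ built in its proof) to write $V=\bm\circ\bpi+\chi\circ F-\chi$, where $\bm\in L^2(\bY,\R^d)\cap\ker P$ and $\chi=\chi_1+\chi_2\circ\bpi\in L^\infty(Y,\R^d)$. On $(\bY,\bmu)$ the sequence $\{\bm\circ\bF^j\}$ consists of reverse martingale differences for $\{\bF^{-j}\bar\cB\}$ and $\bF$ is mixing, so \cite[Theorem~4.3]{KM16} yields the iterated WIP for $\bm$ on $(\bY,\bmu)$, with limiting Brownian motion $W^Y$ of covariance $\Sigma_Y=\int_{\bY}\bm\otimes\bm\,d\bmu$ and iterated integral $\BBW^Y(t)=\int_0^t W^Y\otimes dW^Y+E_Yt$. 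Since $\bpi:(Y,\mu_Y)\to(\bY,\bmu)$ is a measure-preserving semiconjugacy, the identical convergence holds on $(Y,\mu_Y)$ for the Birkhoff sums of $\bm\circ\bpi$ under $F$. It then remains to re-incorporate the coboundary: $\sum_{j<n}V\circ F^j=\sum_{j<n}\bm\circ\bpi\circ F^j+\chi\circ F^n-\chi$, so the correction to $W_n^Y$ is $n^{-1/2}(\chi\circ F^{[n\cdot]}-\chi)$, which tends to $0$ uniformly as $\chi\in L^\infty$, while the extra cross-terms in the iterated integral either vanish in the limit or converge, by Birkhoff's theorem, to a deterministic drift --- a standard passage (bounded coboundary plus factor map) of exactly the type underlying \cite[Theorem~4.3]{KM16}. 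Note that one cannot apply \cite[Theorem~4.3]{KM16} directly on $Y$ with $V$, since $\bm\circ\bpi$ need not lie in $\ker P_F$; descending to $\bY$ is what makes the martingale part genuine. This establishes step~(i): $(W_n^Y,\BBW_n^Y)\to_w(W^Y,\BBW^Y)$ in $D([0,\infty),\R^d\times\R^{d\times d})$ on $(Y,\mu_Y)$.

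With step~(i) in hand, step~(ii) is verbatim: one checks $V\in L^2(Y,\R^d)$ and $|\psi||\hat v|\in L^1(Y^H)$ from the pointwise bounds above, the negligibility $n^{-1/2}\sup_{t\in[0,T]}|\psi\circ f_{nt}|\to_w0$ via the lap-number and ergodic-theorem argument, and $n^{-1}\big|\max_{1\le k\le n}|V_k|\big|_2\to0$ directly from Corollary~\ref{cor:Burk_exp}; then \cite[Theorem~6.1]{KM16} carries the iterated WIP from $V$ on $Y$ to $\hat v$ on $Y^H$ and, through $\pi:Y^H\to M$, to $v$ on $M$, the Stratonovich correction producing $\BBW(t)=\int_0^t W\otimes\circ dW+Dt$ with $D=E-\frac12\Sigma$ skew-symmetric, exactly as in the proof of Theorem~\ref{thm:CLT}.

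I do not expect a genuine obstacle here: the substantive work has already been done in Proposition~\ref{prop:sinai} and Corollaries~\ref{cor:m_exp} and~\ref{cor:Burk_exp}, where the exponential-contraction hypotheses~\eqref{eq:inf_exp}--\eqref{eq:hY} (crucially~\eqref{eq:Wsexp}) enter. The one point requiring a little care is the one flagged above --- passing from the pure-martingale iterated WIP on the quotient $\bY$ to convergence of the iterated integral $\BBW_n^Y$ on $Y$, i.e.\ tracking how the bounded coboundary $\chi\circ F-\chi$ feeds cross-terms into $\BBW_n^Y$ --- and this is routine given \cite[Theorem~4.3]{KM16} applied on $\bY$ followed by the standard factor-map and coboundary bookkeeping.
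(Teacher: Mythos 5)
Your proof is correct and its skeleton coincides with the paper's: both rely on Corollary~\ref{cor:m_exp} (with the $L^\infty$ coboundary $\chi=\chi_1+\chi_2\circ\bpi$ from its proof) for step~(i), and both run step~(ii) verbatim from Theorem~\ref{thm:CLT}, with Corollary~\ref{cor:Burk} replaced by Corollary~\ref{cor:Burk_exp} and \cite[Theorem~6.1]{KM16} passing the iterated WIP from $Y$ to $Y^H$ and then to $M$. The one place you diverge is the citation completing step~(i): the paper simply replaces \cite[Theorem~4.3]{KM16} by \cite[Theorem~5.2]{KM16}, which is the hyperbolic-case iterated WIP designed to take as input exactly a decomposition $V=\bm\circ\bpi+\chi\circ F-\chi$ with $\bm\in\ker P$ on the quotient, whereas you apply \cite[Theorem~4.3]{KM16} on $(\bY,\bmu,\bF)$ and then lift and correct by hand. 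Your transfer is sound --- equality in law under the measure-preserving semiconjugacy $\bpi$, uniform negligibility of $n^{-1/2}(\chi\circ F^{[n\cdot]}-\chi)$, and telescoping plus Birkhoff showing that the coboundary cross-terms in $\BBW_n^Y$ either vanish or contribute only a deterministic drift (so they modify $E_Y$, as you note) --- but be aware that this bookkeeping, including the uniformity over compact time intervals needed for convergence in $D([0,\infty),\R^d\times\R^{d\times d})$ and the joint convergence with $W_n^Y$, is precisely the content of \cite[Theorem~5.2]{KM16}; in effect you are re-proving that result rather than citing it. What the paper's route buys is that step~(i) reduces to a one-line citation swap; what your route buys is independence from the hyperbolic-case statement in \cite{KM16}, at the cost of writing out the drift corrections explicitly, which your sketch indicates correctly but does not carry out.
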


\begin{proof}
Define $W_n^Y$, $\BBW_n^Y$ on $(Y,\mu_Y)$ as in the proof of Theorem~\ref{thm:CLT}.
In place of Proposition~\ref{prop:m} and~\cite[Theorem~4.3]{KM16}, we use
Corollary~\ref{cor:m_exp} and~\cite[Theorem~5.2]{KM16} to deduce that
$(W_n^Y,\BBW_n^Y)\to(W^Y,\BBW^Y)$ in $D([0,\infty),\R^d\times\R^{d\times d})$
where $W^Y$ and $\BBW^Y$ are as in step~(i) of the proof of Theorem~\ref{thm:CLT}.

Claims (a)--(c) are verified in exactly
the same way as in the proof of Theorem~\ref{thm:CLT} (with Corollary~\ref{cor:Burk} replaced by Corollary~\ref{cor:Burk_exp}).
The remainder of the proof of step~(ii) is identical to Theorem~\ref{thm:CLT}, again using~\cite[Theorem~6.1]{KM16} to pass the iterated WIP from $Y$ to $Y^H$ and hence $M$.
\end{proof}

\begin{thm} \label{thm:moment_exp}
Define $v_t$, $S_t$ as in~\eqref{eq:vS}.
\begin{itemize}
\item[(a)]
If $H\in L^p(Y)$ for some $p\ge2$, then there is a constant $C>0$ such that $|v_t|_{p-1}\le Ct^{1/2}\|v\|_\eta$ for all $v\in C^\eta_0(M,\R^d)$, $t>0$.
\item[(b)]
If $H\in L^p(Y)$ for some $p\ge4$, then there is a constant $C>0$ such that $|S_t|_{(p-1)/2}\le Ct\|v\|_\eta^2$ for all $v\in C^\eta_0(M,\R^d)$, $t>0$.
\end{itemize}
\end{thm}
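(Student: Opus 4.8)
The plan is to transcribe the proof of Theorem~\ref{thm:moment} essentially word for word, replacing the inputs from Section~\ref{sec:NUE} by their counterparts from Section~\ref{sec:exp}: Corollary~\ref{cor:Burk_exp} in place of Corollary~\ref{cor:Burk}, and Corollary~\ref{cor:m_exp} in place of Propositions~\ref{prop:VY} and~\ref{prop:m}, with all martingale and Burkholder estimates carried out on the Gibbs--Markov factor $\bY$ and then pulled back to $Y$.

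For (a): as in Theorem~\ref{thm:moment}(a) one reduces to $t\ge1$, passes to the suspension $Y^H$ with $\hat v=v\circ\pi$, and uses the lap number $N(t)$ to write $\hat v_t=V_{N(t)}+\psi\circ F_t-\psi$ with $\psi(y,u)=\int_0^u v(T_sy)\,ds$. Since $\inf H>0$ one has $|N(t)|_\infty\le at+1$, so $\sup_{u\in[0,H(y)]}|V_{N(t)(y,u)}(y)|$ has $L^p(Y)$-norm $\ll t^{1/2}\|v\|_\eta$ by Corollary~\ref{cor:Burk_exp}; a H\"older estimate against $H\in L^p$ (exactly the one in Theorem~\ref{thm:moment}(a), with $q(p-1)=p$) transfers this to the required $L^{p-1}(Y^H)$-bound, and the coboundary term $\psi$ is handled using $|\psi|\le|v|_\infty H$ together with $\int_{Y^H}H^{p-1}\,d\mu^H=\bH^{-1}\int_Y H^p\,d\mu_Y<\infty$. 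Pushing down through $\pi$ yields $|v_t|_{p-1}\ll t^{1/2}\|v\|_\eta$. Nothing here uses the Gibbs--Markov structure beyond what is already built into Corollary~\ref{cor:Burk_exp}, so the argument carries over unchanged.

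For (b): by Corollary~\ref{cor:m_exp} (and the construction in its proof, which also records $\bV=\bm+\chi_2\circ\bF-\chi_2$ on $\bY$ with $\chi_2\in L^\infty(\bY)$ and $\bm\in\ker P$) we may write $V=m+\chi\circ F-\chi$ with $m=\bm\circ\bpi\in L^p(Y)$ and $\chi=\chi_1+\chi_2\circ\bpi\in L^\infty(Y)$ --- precisely the decomposition used in the proof of Theorem~\ref{thm:moment}(b). Hence the discrete-time estimate $\big|\max_{0\le k\le n}|Q_k|\big|_{p/2}\ll n\|v\|_\eta^2$ for $Q_k=\sum_{0\le i<j<k}(V\circ F^i)\otimes(V\circ F^j)$ follows exactly as there: the coboundary cross-terms contribute $\ll n(|m|_{p/2}+|V|_{p/2})|\chi|_\infty\ll n\|v\|_\eta^2$, and the $m\otimes m$ part is controlled by two applications of Burkholder's inequality to the reverse martingale differences $X_{i,k}$ --- a step best run on $\bY$, where $m\circ F^i=(\bm\circ\bF^i)\circ\bpi$, so the $m\otimes m$ part is a $\bpi$-pullback, its $L^{p/2}(Y)$-norm equals the corresponding $L^{p/2}(\bY)$-norm, and the martingale property is exactly $\bm\in\ker P$. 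The passage from this discrete estimate on $Y$ to $Y^H$ and $M$ is then the computation of \cite[Proposition~7.5]{KM16} used verbatim in Theorem~\ref{thm:moment}(b), giving $|S_t|_{(p-1)/2}\ll t\|v\|_\eta^2$. I expect no genuine obstacle: the argument is a routine transcription, and the only point deserving explicit mention --- already the device behind Corollaries~\ref{cor:m_exp} and~\ref{cor:Burk_exp} --- is that the martingale and Burkholder estimates live naturally on the genuinely Gibbs--Markov factor $\bY$ and transfer to $Y$ for free because $\bpi$ is measure preserving and the relevant quantities are pullbacks under $\bpi$.
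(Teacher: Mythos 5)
Your proposal is correct and matches the paper, whose proof of Theorem~\ref{thm:moment_exp} is literally ``identical to that of Theorem~\ref{thm:moment}'' with Corollary~\ref{cor:m_exp} and Corollary~\ref{cor:Burk_exp} supplying the decomposition $V=\bm\circ\bpi+\chi\circ F-\chi$ ($\bm\in\ker P$ on $\bY$, $\chi\in L^\infty$) and the maximal inequality. Your explicit remark that the martingale/Burkholder steps are run on the Gibbs--Markov quotient $\bY$ and pulled back isometrically under $\bpi$ is exactly the device the paper already uses in Corollary~\ref{cor:Burk_exp}, so nothing further is needed.
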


\begin{proof}
The proof is identical to that of Theorem~\ref{thm:moment}.
\end{proof}

\begin{rmk}  \label{rmk:rearrange}
(i) Again, it follows that the formulas for $\Sigma$ and $E$ in Theorem~\ref{thm:WIP}
hold provided $p\ge4$, and that $\det\Sigma>0$ for $v$ lying outside a closed subspace of infinite codimension.
\\ (ii)
Under appropriate hypotheses on the flow, it is possible to incorporate the rearrangement idea from Section~\ref{sec:rearrange} and hence to obtain the improved moment estimates in Theorem~\ref{thm:momentDelta}.
We omit the tedious details.
\end{rmk}

\section{Nonuniformly hyperbolic flows with horseshoe structure}
\label{sec:prod}

In this section, we relax the assumption~\eqref{eq:Wsexp} about exponential contraction along stable leaves under the flow.
Instead we assume that $F:Y\to Y$ is a nonuniformly hyperbolic horseshoe
as in~\cite{Young98}.  We begin by describing the basic set up, focusing on the parts needed for understanding what follows and referring to~\cite{Young98} for further details.

We assume the structure from Section~\ref{sec:stand}, including the measurable partition $\cW^s$ of $Y$ into stable leaves.  Further, we suppose that there is a second measurable partition $\cW^u$ of $Y$ consisting of
{\em unstable leaves}.

We assume that there are constants $C>0$, $\gamma_0\in(0,1)$, $\eta\in(0,1]$,
such that for all $y\in Y$, $y'\in W^s_y$, $t\in[0,H(y)]\cap[0,H(y')]$,
\begin{align} \label{eq:Hd}
& |H(y)-H(y')|  \le C({\SMALL\inf_{Y_j}}H)d(y,y')^\eta, \qquad
d(T_ty,T_ty')  \le Cd(y,y')^\eta;
\end{align}
and for all $y,y'\in Y_j$, $j\ge1$, $y'\in W^u_y$, $t\in[0,H(y)]\cap[0,H(y')]$,
\begin{align} \label{eq:Hdu}
& |H(y)-H(y')|  \le C({\SMALL\inf_{Y_j}}H)\gamma_0^{s(y,y')}, \qquad
d(T_ty,T_ty')  \le C\gamma_0^{s(y,y')}.
\end{align}
Also, we assume that for all $y,y'\in Y_j$, $j\ge1$, $n\ge0$,
\begin{alignat}{2}
\label{eq:Ws}
& d(F^ny,F^ny')  \le C\gamma_0^n
&& \quad\text{for $y'\in W^s_y$},
\\ & d(y,y')  \le C\gamma_0^{s(y,y')}
&& \quad\text{for $y'\in W^u_y$}.
\label{eq:Wu}
\end{alignat}

Define the induced observable $V(y)=\int_0^{H(y)}v(T_uy)\,du$ on $Y$.

\begin{prop} \label{prop:VY_prod}
Let $v\in C^\eta(M,\R^d)$, and set $\gamma_1=\gamma_0^{\eta}$.  Then
$|V|\le |v|_\infty H$ and
there is a constant $C>0$ such that
$|V(y)-V(y')|\le C\|v\|_\eta ({\SMALL\inf}_{Y_j}H) d(y,y')^{\eta^2}$
for $y\in Y$, $y'\in W^s_y$; while 
$|V(y)-V(y')|\le C\|v\|_\eta ({\SMALL\inf}_{Y_j}H) \gamma_1^{s(y,y')}$
for $y,y'\in Y_j$, $j\ge1$, $y'\in W^u_y$.
\end{prop}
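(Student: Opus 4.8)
The plan is to follow the proof of Proposition~\ref{prop:VY} closely, treating the two cases of the statement separately. The bound $|V|\le|v|_\infty H$ is immediate from $|V(y)|\le\int_0^{H(y)}|v(T_uy)|\,du$. For the H\"older estimates, swapping the roles of $y$ and $y'$ if necessary we may assume $H(y)\ge H(y')$ and write
\[
|V(y)-V(y')|\le\int_{H(y')}^{H(y)}|v(T_uy)|\,du+\int_0^{H(y')}|v(T_uy)-v(T_uy')|\,du ;
\]
the first integral is bounded by $|v|_\infty|H(y)-H(y')|$, and the integrand of the second by $|v|_\eta\,d(T_uy,T_uy')^\eta$.

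Before carrying this out I would record the auxiliary bound $\sup_{Y_j}H\ll\inf_{Y_j}H$ for every $j$. This is where the hyperbolic product structure of the horseshoe (as in~\cite{Young98}) is used: any two points $y,y''\in Y_j$ are joined through the holonomy point $z=W^s_y\cap W^u_{y''}\in Y_j$, and applying the first inequality of~\eqref{eq:Hd} to $(y,z)$ and the first inequality of~\eqref{eq:Hdu} to $(z,y'')$, together with the boundedness of $(M,d)$, gives $|H(y)-H(y'')|\ll\inf_{Y_j}H$; hence $\sup_{Y_j}H\ll\inf_{Y_j}H$. In particular $H(y')\ll\inf_{Y_j}H$ whenever $y'\in Y_j$, which is exactly what lets the factor $H(y')$ arising in the second integral be absorbed into $\inf_{Y_j}H$; this plays the role that~\eqref{eq:inf} plays in the proof of Proposition~\ref{prop:VY}.

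For the stable case, take $y\in Y_j$ and $y'\in W^s_y\subset Y_j$. By~\eqref{eq:Hd} the first integral is $\le C|v|_\infty(\inf_{Y_j}H)d(y,y')^\eta$, which is $\ll|v|_\infty(\inf_{Y_j}H)d(y,y')^{\eta^2}$ since $\eta\le1$ and $d$ is bounded. For the second integral, \eqref{eq:Hd} gives $d(T_uy,T_uy')\le Cd(y,y')^\eta$ for $u\in[0,H(y')]$, so the integrand is $\le C^\eta|v|_\eta\,d(y,y')^{\eta^2}$; integrating over $[0,H(y')]$ and using $H(y')\ll\inf_{Y_j}H$ yields the claimed estimate. (On a stable leaf $s(y,y')=\infty$, which is why the metric $d$ rather than the symbolic metric appears here.) The unstable case is the same argument with~\eqref{eq:Hdu} replacing~\eqref{eq:Hd}: the first integral is $\le C|v|_\infty(\inf_{Y_j}H)\gamma_0^{s(y,y')}\le C|v|_\infty(\inf_{Y_j}H)\gamma_1^{s(y,y')}$ since $\gamma_0\le\gamma_0^\eta=\gamma_1$, and the integrand of the second is $\le C^\eta|v|_\eta\gamma_1^{s(y,y')}$, after which integration and $H(y')\ll\inf_{Y_j}H$ finish the proof.

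None of these steps is a genuine obstacle. The only one requiring any care is the auxiliary bound $\sup_{Y_j}H\ll\inf_{Y_j}H$, where one has to exploit the product structure of $Y_j$ together with both~\eqref{eq:Hd} and~\eqref{eq:Hdu}; the rest is the routine bookkeeping of exponents ($d(y,y')^\eta\ll d(y,y')^{\eta^2}$ on a bounded metric space, and $\gamma_0^{s}\le\gamma_1^{s}$) needed to match the two displayed bounds with those in the statement.
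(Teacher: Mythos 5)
Your proof is correct and follows essentially the same route as the paper: split $|V(y)-V(y')|$ into the two integrals and apply the first and second inequalities of~\eqref{eq:Hd} (resp.~\eqref{eq:Hdu}), using boundedness of $d$ to trade $d(y,y')^\eta$ for $d(y,y')^{\eta^2}$ and $\gamma_0\le\gamma_1$. The only difference is that you make explicit the absorption of the factor $H(y')$ into $\inf_{Y_j}H$ via the auxiliary bound $\sup_{Y_j}H\ll\inf_{Y_j}H$ (using the product structure of the horseshoe), a step the paper's one-line argument leaves implicit; this is a legitimate and indeed necessary justification under the stated hypotheses.
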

\begin{proof} The first estimate is immediate.
Let $y'\in W^s_y$ and suppose without loss that $H(y)\ge H(y')$.  Then
\begin{align*}
|& V(y)  - V(y')|
 \le \int_{H(y')}^{H(y)}|v(T_uy)|\,du+\int_0^{H(y')}|v(T_uy)-v(T_uy')|\,du \\
& \le |v|_\infty(H(y)-H(y'))+|v|_\eta H(y')\max_{u\in[0,H(y')]}d(T_uy,T_uy')^\eta
 \ll \|v\|_\eta({\SMALL\inf}_{Y_j}H)d(y,y')^{\eta^2},
\end{align*}
by~\eqref{eq:Hd}. The last estimate follows by the same argument using~\eqref{eq:Hdu} instead of~\eqref{eq:Hd}.~
\end{proof}

\subsection{Martingale-coboundary decomposition}

We apply a Gordin type argument~\cite{Gordin69} to obtain an $L^2$ martingale-coboundary decomposition.
Let $\overline{\cB}$ denote the underlying $\sigma$-algebra on $\bY$
and let $\cB=\bpi^{-1}\overline{\cB}$.
By Proposition~\ref{prop:VY_prod},
$|\E(V|\cB)|_1\le |V|_1\le |v|_\infty|H|_1$.
Since $\E(V|\cB)$ is constant along stable leaves, we can write
$\E(V|\cB)=\bV\circ \bpi$ where $\bV\in L^1(\bY)$.

Let $P$ be the transfer operator for the Gibbs-Markov map $\bF:\bY\to \bY$.

\begin{prop}  \label{prop:m_prod}
Suppose that $H\in L^p(Y)$ for some $p\ge1$.  There is a constant $C>0$ such that
\begin{align*}
& \SMALL  \sum_{n\ge0} \big|\E(V\circ F^n|\cB)-V\circ F^n\big|_{L^p(Y)}\le C\|v\|_\eta, \qquad
 \sum_{n\ge 1} |P^n\bV|_{L^\infty(\bY)}\le C\|v\|_\eta,
\end{align*}
for all $v\in C^\eta_0(M,\R^d)$.
\end{prop}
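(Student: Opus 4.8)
The plan is to establish the two asserted sums separately, since they control the two "directions" of the hyperbolic structure. For the second sum, $\sum_{n\ge1}|P^n\bV|_{L^\infty(\bY)}$, I would argue exactly as in Corollary~\ref{cor:m_exp}: by Proposition~\ref{prop:VY_prod}, $\bV$ (the factor of $\E(V|\cB)$ through $\bpi$) satisfies $|\bV|\le |v|_\infty H$ with unstable-direction Lipschitz constant controlled by $\|v\|_\eta\,\inf_{Y_j}H$, so via $g|_{\bY_j}=\inf_{Y_j}H$ the function $\bV$ lies in the cone to which Proposition~\ref{prop:GM}(b) applies, giving $\|P\bV\|_\theta\ll\|v\|_\eta|g|_1\le\|v\|_\eta|H|_1\ll\|v\|_\eta$. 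Then Proposition~\ref{prop:GM}(a) yields $\|P^n\bV\|_\theta=\|P^{n-1}(P\bV)\|_\theta\ll\gamma^{n-1}\|P\bV\|_\theta$ (after subtracting $\int_Y V\,d\mu_Y=0$, which survives under $P$), and summing the geometric series bounds $\sum_n|P^n\bV|_\infty\ll\|v\|_\eta$.

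For the first sum, $\sum_{n\ge0}\big|\E(V\circ F^n|\cB)-V\circ F^n\big|_{L^p(Y)}$, the key point is that iterating $F$ contracts stable leaves exponentially (inequality~\eqref{eq:Ws}), and $\cB=\bpi^{-1}\overline{\cB}$ is exactly the $\sigma$-algebra that forgets the stable coordinate. So $\E(V\circ F^n|\cB)(y)$ is, up to the averaging, a value of $V\circ F^n$ at a point on the same stable leaf as $y$; the difference $V(F^ny)-V(F^ny')$ for $y'\in W^s_y$ is controlled by the stable-direction estimate in Proposition~\ref{prop:VY_prod} applied at the point $F^ny$, namely $\ll\|v\|_\eta(\inf_{Y_{j(n)}}H)\,d(F^ny,F^ny')^{\eta^2}\ll\|v\|_\eta(\inf_{Y_{j(n)}}H)\gamma_0^{n\eta^2}$ where $Y_{j(n)}$ is the partition element containing $F^ny$. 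Taking conditional expectation and then $L^p$ norms, the $\inf_{Y_{j(n)}}H$ factor is absorbed by $H$-type $L^p$ integrability — more precisely, $\big|\E(V\circ F^n|\cB)-V\circ F^n\big|_{L^p}\ll\gamma_0^{n\eta^2}\|v\|_\eta\,|H\circ F^n|_p=\gamma_0^{n\eta^2}\|v\|_\eta|H|_p\ll\gamma_0^{n\eta^2}\|v\|_\eta$, using $F$-invariance of $\mu_Y$ and $H\in L^p(Y)$. Summing the geometric series in $n$ gives the bound. (One must be slightly careful translating "$\inf_{Y_{j(n)}}H$" into "$H\circ F^n$" — since $F^ny$ and $F^ny'$ lie in the same $Y_{j(n)}$, pointwise $\inf_{Y_{j(n)}}H\le H(F^ny)$, which is what makes the $L^p$ estimate close.)

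The main obstacle I anticipate is bookkeeping in the first sum: carefully distinguishing the partition element $Y_{j(n)}$ containing $F^ny$ (whose $\inf$ of $H$ appears in the stable estimate of Proposition~\ref{prop:VY_prod}) from the original index $j$, and verifying that one can indeed dominate $\inf_{Y_{j(n)}}H$ pointwise by $H\circ F^n$ so that $F$-invariance of $\mu_Y$ turns the bound into a clean multiple of $|H|_p$. A secondary technical point is checking that $\E(V\circ F^n|\cB)$ really can be represented (a.e.) as an average of $V\circ F^n$ over the stable leaf in the way suggested — this is standard for the Young-tower/horseshoe product structure but should be invoked with reference to the measurable partition into stable leaves from Section~\ref{sec:stand}. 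Once these are in hand, both estimates reduce to summing geometric series and the proposition follows.
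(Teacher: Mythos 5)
Your treatment of the first sum is essentially the paper's argument: for $y'\in W^s_y$ the points $F^ny,F^ny'$ lie on the same stable leaf, hence in the same partition element, so the stable estimate of Proposition~\ref{prop:VY_prod} together with~\eqref{eq:Ws} gives $|V(F^ny)-V(F^ny')|\ll \gamma_0^{n\eta^2}\|v\|_\eta\,H(F^ny)$ uniformly in $y'$; averaging over the leaf and using $F$-invariance of $\mu_Y$ yields $|\E(V\circ F^n|\cB)-V\circ F^n|_p\ll\gamma_0^{n\eta^2}\|v\|_\eta|H|_p$ and the series converges. This part is correct and matches the paper.

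The gap is in the second sum. You assert that $\bV$ (the factor of $\E(V|\cB)$ through $\bpi$) inherits the weighted Lipschitz estimates of Proposition~\ref{prop:VY_prod} directly, ``exactly as in Corollary~\ref{cor:m_exp}'', and hence lies in the cone where Proposition~\ref{prop:GM}(b) applies. But the two settings differ in precisely the relevant way. In Section~\ref{sec:exp} the function $\hV=V+\chi_1-\chi_1\circ F$ is \emph{constant along stable leaves by construction} (this is what $\chi_1$, built from the exponential contraction~\eqref{eq:Wsexp}, achieves), so the quotient estimates of Proposition~\ref{prop:sinai} are pointwise statements and transfer for free. In Section~\ref{sec:prod}, where~\eqref{eq:Wsexp} is dropped, $\bV$ is instead the fibrewise average of $V$ against the conditional measures of the disintegration of $\mu_Y$ over $\bpi$. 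Proposition~\ref{prop:VY_prod} only controls $|V(y)-V(y')|$ for $y'$ on the unstable leaf of $y$; to compare $\bV(\bar y)$ with $\bV(\bar y')$ one must match the two stable leaves by holonomy along unstable leaves \emph{and} control how the conditional measures transform under that holonomy (H\"older regularity of the holonomy Jacobian). This is exactly the step the paper handles by the Sublemma on p.~612 of~\cite{Young98} (see~\cite[Lemma~5.4]{BalintGouezel06}), concluding $|\bV(y)-\bV(y')|\ll\|v\|_\eta\,\bar g(y)\gamma_1^{s(y,y')}$ on $\bY\!_j$; without it, applying Proposition~\ref{prop:GM}(b) to $\bV$ is unjustified. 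A smaller point in the same passage: $|\bV|\le|v|_\infty H$ does not make sense on $\bY$, since $H$ need not be constant on stable leaves (only $\tau$ is); the correct dominating function is $\bar g$ with $g|_{Y_j}=\inf_{Y_j}H$, using~\eqref{eq:Hd}--\eqref{eq:Hdu} to see that $H\ll\inf_{Y_j}H$ on $Y_j$. Once these two points are supplied, the remainder of your argument (mean zero of $\bV$, $\|P\bV\|_\theta\ll\|v\|_\eta|g|_1$, geometric decay from Proposition~\ref{prop:GM}(a)) is the paper's proof.
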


\begin{proof}
Set $\gamma_2=\gamma_0^{\eta^2}$.
Let
$y,y'\in Y$ with $y'\in W^s_y$.  Then
\[
|V(F^ny)-V(F^ny')|\ll \|v\|_\eta\, H(F^ny) d(F^ny,F^ny')^{\eta^2}
\ll \gamma_2^n\|v\|_\eta\, H(F^ny),
\]
 by Proposition~\ref{prop:VY_prod} and~\eqref{eq:Ws}.
Hence
$|\E(V\circ F^n|\cB)(y)-V(F^ny)|
\ll \gamma_2^n\|v\|_\eta\, H(F^ny)$ and
\[
|\E(V\circ F^n|\cB)-V\circ F^n|_p\ll \gamma_2^n \|v\|_\eta\, |H|_p,
\]
so the first sum converges.

Now define $g\in L^1(Y)$ and $\bar g\in L^1(\bY)$ by setting
$g(y)=\inf_{Y_j}H$ for all $y\in Y_j$ and writing $g=\bar g\circ\bpi$.
By Proposition~\ref{prop:VY_prod}, we have
$|V|\ll |v|_\infty\, g$ and
$|V(y)-V(y')|\ll \|v\|_\eta\,g(y)\gamma_1^{s(y,y')}$ for all $y,y'\in Y_j$, $j\ge1$, with $y'\in W^u_y$.
It follows that $|\bV|\ll |v|_\infty\,\bar g$.
Arguing as in~\cite[Sublemma, p.~612]{Young98} (see~\cite[Lemma~5.4]{BalintGouezel06}),
$|\bV(y)-\bV(y')|\ll \|v\|_\eta\, \bar g(y)\gamma_1^{s(y,y')}$
for $y,y'\in\bY\!_j$
(cf.\ \cite[Lemma~5.4]{BalintGouezel06}).
By Proposition~\ref{prop:GM}, there exists $\gamma\in(0,1)$ such that
$|P^n\bV|_\infty\ll \gamma^n \|P\bV\|_{\gamma_1}\ll\gamma^n\|v\|_\eta$ for $n\ge1$.
Hence $\sum_{n\ge1}|P^n\bV|_\infty<\infty$, completing the proof.~
\end{proof}

\begin{cor} \label{cor:m_prod}
Let $v\in C^\eta_0(M,\R^d)$ and suppose that $H\in L^p(Y)$ for
some $p\ge1$.
There exists
$\bm\in L^p(\bY,\R^d)$, $\chi\in L^p(Y,\R^d)$ such that
$V=\bm\circ\bpi+\chi\circ F-\chi$ and $\bm\in\ker P$.

Moreover, there is a constant $C>0$ such that
$|\bm|_p\le C\|v\|_\eta$ and
$|\chi|_p\le C\|v\|_\eta$ for all $v\in C^\eta_0(M,\R^d)$.
\end{cor}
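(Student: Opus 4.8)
The plan is to combine a Gordin-type reduction with the Gibbs--Markov transfer-operator argument, exactly as in the proofs of Proposition~\ref{prop:m} and Corollary~\ref{cor:m_exp}, with the two estimates in Proposition~\ref{prop:m_prod} supplying, respectively, the convergence of the coboundary $\chi_1$ on $Y$ and of the coboundary $\chi_2$ on $\bY$. Three facts will be used throughout: $F^{-1}\cB\subseteq\cB$ (since $F$ maps stable leaves into stable leaves), $\bF\circ\bpi=\bpi\circ F$, and the standard identities $\E(g\mid\cB)\circ F=\E(g\circ F\mid F^{-1}\cB)$ on $Y$ together with $P(k\circ\bF)=k$ and $(Pk)\circ\bF=\E_{\bmu}(k\mid\bF^{-1}\overline{\cB})$ on $\bY$.

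\emph{Step 1 (reduction to $\bY$).} By the first estimate in Proposition~\ref{prop:m_prod}, the series $\chi_1=\sum_{n\ge0}\bigl(V\circ F^n-\E(V\circ F^n\mid\cB)\bigr)$ converges in $L^p(Y,\R^d)$ with $|\chi_1|_p\ll\|v\|_\eta$ (recall $|V|_p\le|v|_\infty|H|_p<\infty$). Re-indexing and applying $\E(g\mid\cB)\circ F=\E(g\circ F\mid F^{-1}\cB)$ term by term, a telescoping computation gives $V+\chi_1\circ F-\chi_1=\hat V$ with
\[
\hat V=\E(V\mid\cB)+\sum_{m\ge1}\bigl(\E(V\circ F^m\mid\cB)-\E(V\circ F^m\mid F^{-1}\cB)\bigr),
\]
which is $\cB$-measurable. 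Writing $\bar g_m\circ\bpi=\E(V\circ F^m\mid\cB)$ (so that $\bar g_0$ is the function $\bV$ of Proposition~\ref{prop:m_prod}) and using $(Pk)\circ\bF=\E_{\bmu}(k\mid\bF^{-1}\overline{\cB})$ one gets $\E(V\circ F^m\mid F^{-1}\cB)=\bigl((P\bar g_m)\circ\bF\bigr)\circ\bpi$, hence $\hat V=\bV'\circ\bpi$ for $\bV'=\bV+\sum_{m\ge1}\bigl(\bar g_m-(P\bar g_m)\circ\bF\bigr)\in L^p(\bY,\R^d)$, with $|\bV'|_p\le|\hat V|_p\le|V|_p+2|\chi_1|_p\ll\|v\|_\eta$ (using that conditional expectation is an $L^p$ contraction).

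\emph{Step 2 (transfer operator on $\bY$, and conclusion).} Each summand above lies in $\ker P$, since $P\bigl(\bar g_m-(P\bar g_m)\circ\bF\bigr)=P\bar g_m-P\bar g_m=0$ by $P(k\circ\bF)=k$; hence $\bV'-\bV\in\ker P$ and so $P^n\bV'=P^n\bV$ for every $n\ge1$. By the second estimate in Proposition~\ref{prop:m_prod}, $\chi_2=\sum_{n\ge1}P^n\bV'$ therefore converges in $L^\infty(\bY,\R^d)$ with $|\chi_2|_\infty\ll\|v\|_\eta$. Put $\bm=\bV'-\chi_2\circ\bF+\chi_2$; then $P\bm=P\bV'-\chi_2+P\chi_2=0$ and $|\bm|_p\le|\bV'|_p+2|\chi_2|_\infty\ll\|v\|_\eta$, and setting $\chi=\chi_1+\chi_2\circ\bpi\in L^p(Y,\R^d)$ (so $|\chi|_p\ll\|v\|_\eta$) we conclude, using $\bF\circ\bpi=\bpi\circ F$,
\[
V=\hat V+\chi_1\circ F-\chi_1=(\bm+\chi_2\circ\bF-\chi_2)\circ\bpi+\chi_1\circ F-\chi_1=\bm\circ\bpi+\chi\circ F-\chi .
\]

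The main point to watch is the compatibility between the two steps: the Gordin reduction produces a function $\bV'$ on $\bY$ that need not equal $\bV$, whereas Proposition~\ref{prop:m_prod} controls the transfer-operator decay only of $\bV$. The resolution — and the one genuine calculation in the proof — is that $\bV'-\bV$ lies in $\ker P$ and is hence invisible to $P^n$ for $n\ge1$; everything else is routine bookkeeping parallel to Proposition~\ref{prop:m} and Corollary~\ref{cor:m_exp}.
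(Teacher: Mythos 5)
Your argument is correct in substance and is essentially the paper's proof: the paper likewise builds the coboundary out of the two series controlled by Proposition~\ref{prop:m_prod} (the Gordin series on $Y$ and the series $\sum_{n\ge1}(P^n\bV)\circ\bpi$ on $\bY$) and then checks that the resulting $m$ is $\cB$-measurable with $\E(m\mid F^{-1}\cB)=0$; your observation that $\bV'-\bV\in\ker P$, so $P^n\bV'=P^n\bV$ for $n\ge1$, plays exactly the role of the paper's cancellation $h_0=\bV\circ\bpi=\E(V\mid\cB)=g_0$ in that computation. One bookkeeping slip: Step~1 establishes $V+\chi_1\circ F-\chi_1=\hat V$, hence $V=\hat V-\chi_1\circ F+\chi_1$, whereas your final display uses $V=\hat V+\chi_1\circ F-\chi_1$. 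With your sign convention for $\chi_1$ the correct coboundary is $\chi=\chi_2\circ\bpi-\chi_1$ (equivalently, define $\chi_1=\sum_{n\ge0}\bigl(\E(V\circ F^n\mid\cB)-V\circ F^n\bigr)$, which is in effect what the paper does); this changes nothing in the stated $L^p$ bounds.
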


\begin{proof}  Define $g_n=\E(V\circ F^n|\cB)$ and
$h_n=(P^n\bV)\circ\bpi$.
By Proposition~\ref{prop:m_prod},
\[
\SMALL \chi=\sum_{n\ge 0} (g_n-V\circ F^n)
+\sum_{n\ge 1} h_n
\]
converges in $L^p(Y)$.
Define
$m=V+\chi-\chi\circ F\in L^p(Y)$.
We claim that (i) $m$ is $\cB$-measurable and
(ii) $\E(m|F^{-1}\cB)=0$.
Property~(i) ensures that $m=\bm\circ \bpi$ where
$\bm:\bY\to\R^d$.
By property~(ii),
\begin{align*}
(UP\bm)\circ\bpi & = \E(\bm|\bF^{-1}\overline{\cB})\circ\bpi
=\E(\bm\circ\bpi|\bpi^{-1}\bF^{-1}\overline{\cB})
=\E(m|F^{-1}\cB)=0,
\end{align*}
where $Uv=v\circ \bF$.
Hence $UP\bm=0$ and so $\bm\in\ker P$.

It remains to verify the claim.  We have
\begin{align} \label{eq:m}
\SMALL m=\sum_{n\ge0} g_n-\sum_{n\ge0} g_n\circ F
+\sum_{n\ge1} h_n-\sum_{n\ge1} h_n\circ F.
\end{align}
Clearly, $g_n=\E(V\circ F^n|\cB)$ is $\cB$-measurable.  Also,
$P^n\bV$ is $\overline{\cB}$-measurable,
so $h_n=(P^n\bV)\circ\bpi$ is measurable with respect to $\bpi^{-1}\overline{\cB}=\cB$.
Next, $g_n\circ F$ is measurable with respect to $F^{-1}\cB\subset \cB$ and similarly for $h_n\circ F$.  Part (i) of the claim follows.

For part~(ii) of the claim, note that
$g_n\circ F=\E(V\circ F^n|\cB)\circ F=
\E(V\circ F^{n+1}|F^{-1}\cB)$.  Hence
\[
\E(g_n\circ F|F^{-1}\cB)=\E(V\circ F^{n+1}|F^{-1}\cB)=\E(g_{n+1}|F^{-1}\cB).
\]
Also,
\begin{align*}
h_n\circ F=(P^n\bV)\circ \bF\circ\bpi
=(UP^n\bV)\circ\bpi = \E(P^{n-1}\bV|\bF^{-1}\overline{\cB})\circ\bpi
= \E(h_{n-1}|F^{-1}\cB),
\end{align*}
so $\E(h_n\circ F|F^{-1}\cB)=\E(h_{n-1}|F^{-1}\cB)$.
Substituting into~\eqref{eq:m},
$\E(m|F^{-1}\cB)=
\E(g_0|F^{-1}\cB)- \E(h_0|F^{-1}\cB)$.
Finally, $h_0=\bV\circ\bpi=\E(V|\cB)=g_0$, so
$\E(m|F^{-1}\cB)=0$ as required.
\end{proof}

\begin{cor} \label{cor:Burk_prod}
Suppose that $H\in L^p(Y)$ for some $p\ge2$.
There exists $C>0$ such that
$\big|\max_{1\le k\le n}|V_k|\big|_p\le Cn^{1/2}\|v\|_\eta$
for all $v\in C^\eta_0(M,\R^d)$, $n\ge1$.
\end{cor}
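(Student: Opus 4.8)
The plan is to mimic the proofs of Corollaries~\ref{cor:Burk} and~\ref{cor:Burk_exp}, using the martingale-coboundary decomposition of Corollary~\ref{cor:m_prod} in place of those of Proposition~\ref{prop:m} and Corollary~\ref{cor:m_exp}. Since $H\in L^p(Y)$ with $p\ge2\ge1$, Corollary~\ref{cor:m_prod} lets us write $V=\bm\circ\bpi+\chi\circ F-\chi$ with $\bm\in L^p(\bY,\R^d)\cap\ker P$ and $\chi\in L^p(Y,\R^d)$, where $|\bm|_p\ll\|v\|_\eta$ and $|\chi|_p\ll\|v\|_\eta$ uniformly over $v\in C^\eta_0(M,\R^d)$.

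First I would handle the martingale part exactly as in Corollary~\ref{cor:Burk}. Since $\bm\in\ker P$, we have $\E(\bm\mid\bF^{-1}\overline{\cB})=UP\bm=0$, so $\{\bm\circ\bF^j:j\ge0\}$ is a reverse martingale difference sequence with respect to $\{\bF^{-j}\overline{\cB}:j\ge0\}$ on $(\bY,\bmu)$. Writing $\bm_n=\sum_{j=0}^{n-1}\bm\circ\bF^j$ and using $p\ge2$, Burkholder's inequality~\cite{Burkholder73} gives $\big|\max_{1\le k\le n}|\bm_k|\big|_{L^p(\bY)}\le\frac{p}{p-1}|\bm_n|_p\ll\big|\big(\sum_{j=1}^n\bm^2\circ\bF^j\big)^{1/2}\big|_p\le n^{1/2}|\bm|_p\ll n^{1/2}\|v\|_\eta$. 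Because $\bpi$ is a measure-preserving semiconjugacy with $\bpi_*\mu_Y=\bmu$ and $(\bm\circ\bpi)_k=\bm_k\circ\bpi$, this transfers verbatim to $\big|\max_{1\le k\le n}|(\bm\circ\bpi)_k|\big|_{L^p(Y)}\ll n^{1/2}\|v\|_\eta$, where $(\bm\circ\bpi)_k=\sum_{j=0}^{k-1}(\bm\circ\bpi)\circ F^j$.

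The only place where the argument genuinely differs from Corollaries~\ref{cor:Burk} and~\ref{cor:Burk_exp} is the coboundary term, since here $\chi$ is merely $L^p$ rather than $L^\infty$. From $V_n=(\bm\circ\bpi)_n+\chi\circ F^n-\chi$ we get $\max_{1\le k\le n}|V_k|\le\max_{1\le k\le n}|(\bm\circ\bpi)_k|+|\chi|+\max_{1\le k\le n}|\chi\circ F^k|$, and since $F$ preserves $\mu_Y$, $\big|\max_{1\le k\le n}|\chi\circ F^k|\big|_p\le\big(\sum_{k=1}^n|\chi\circ F^k|_p^p\big)^{1/p}=n^{1/p}|\chi|_p$. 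As $p\ge2$ we have $n^{1/p}\le n^{1/2}$, so this contribution is also $\ll n^{1/2}\|v\|_\eta$; adding the three bounds gives the claim. I do not expect any real obstacle here: the one thing to verify is that the $L^p$-only control of $\chi$ still suffices, which it does precisely because $p\ge2$ forces $n^{1/p}\le n^{1/2}$.
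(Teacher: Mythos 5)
Your proof is correct and follows essentially the same route as the paper: Burkholder's inequality on the quotient $\bY$ for the martingale part $\bm\circ\bpi$, together with the crude bound $\big|\max_{1\le k\le n}|\chi\circ F^k|\big|_p\le n^{1/p}|\chi|_p\le n^{1/2}|\chi|_p$ for the $L^p$ coboundary, which is exactly how the paper handles the fact that $\chi$ is no longer bounded.
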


\begin{proof}  As in the proof of Corollary~\ref{cor:Burk},
$\big|\max_{1\le k\le n}|(\bm\circ\bpi)_k|\big|_{L^p(Y)}=
\big|\max_{1\le k\le n}|\bm_k|\big|_{L^p(\bY)}\ll n^{1/2}|\bm|_{L^p(\bY)}
\ll n^{1/2}\|v\|_\eta$.
Also, 
$\int_Y \max_{k\le n}|\chi|^p\circ F^k\,d\mu_Y
\le \int_Y \sum_{k\le n}|\chi|^p\circ F^k\,d\mu_Y
= n |\chi|_p^p\ll n\|v\|_\eta^p$.
Since $p\ge2$, the result follows.
\end{proof}

\subsection{Central limit theorems and moment estimates}

We continue to assume the set up from the beginning of the section.

\begin{thm}   \label{thm:CLT_prod}
Suppose that $H\in L^2(Y)$.
Let $v\in C^\eta_0(M,\R^d)$.
Then the statement and conclusion of Theorem~\ref{thm:CLT} holds.
That is, $(W_n,\BBW_n)\to_w (W,\BBW)$ in $C([0,\infty),\R^d\times\R^{d\times d})$
on $(M,\mu_M)$, where $W_n$, $\BBW_n$, $W$ and $\BBW$ are defined as in
Theorem~\ref{thm:CLT}.
\end{thm}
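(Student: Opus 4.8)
The plan is to repeat, almost verbatim, the two-step argument used for Theorem~\ref{thm:CLT_exp} (itself modelled on Theorem~\ref{thm:CLT}): first establish the discrete-time iterated WIP on the quotient $\bY$ via a martingale--coboundary decomposition, then transport it to the suspension $Y^H$ and finally to $M$ through the semiconjugacy $\pi:Y^H\to M$. The only feature distinguishing the horseshoe setting from Section~\ref{sec:exp} is that the coboundary now lies merely in $L^p(Y)$ (here $L^2(Y)$, since $H\in L^2(Y)$) rather than in $L^\infty(Y)$; this is precisely the level of generality accommodated by~\cite[Theorem~5.2]{KM16}, whereas the nonuniformly expanding version~\cite[Theorem~4.3]{KM16} used for Theorem~\ref{thm:CLT} presupposes a bounded coboundary.

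For step~(i) I would define $W_n^Y$, $\BBW_n^Y$ on $(Y,\mu_Y)$ exactly as in the proof of Theorem~\ref{thm:CLT}, and apply Corollary~\ref{cor:m_prod} with $p=2$ to write $V=\bm\circ\bpi+\chi\circ F-\chi$ with $\bm\in L^2(\bY,\R^d)\cap\ker P$ and $\chi\in L^2(Y,\R^d)$. Since $UP\bm=0$, the sequence $\{\bm\circ\bF^n\}$ is a reverse martingale-difference sequence for the mixing Gibbs--Markov map $\bF:\bY\to\bY$, so~\cite[Theorem~5.2]{KM16} yields $(W_n^Y,\BBW_n^Y)\to_w(W^Y,\BBW^Y)$ in $D([0,\infty),\R^d\times\R^{d\times d})$ on $(Y,\mu_Y)$, where $W^Y$ is $d$-dimensional Brownian motion with some covariance $\Sigma_Y\in\R^{d\times d}$ and $\BBW^Y(t)=\int_0^t W^Y\otimes dW^Y+E_Yt$ with $E_Y\in\R^{d\times d}$.

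For step~(ii) I would verify the three claims (a)--(c) from the proof of Theorem~\ref{thm:CLT}: claim~(a), namely $V\in L^2(Y,\R^d)$ and $|\psi|\,|\hat v|\in L^1(Y^H)$, is immediate from $|V|\le|v|_\infty H$, $|\psi|\le|v|_\infty H$ and $H\in L^2(Y)$; claim~(c) is immediate from Corollary~\ref{cor:Burk_prod}; and claim~(b), namely $n^{-1/2}\sup_{t\in[0,T]}|\psi\circ f_{nt}|\to_w0$, follows word for word from the lap-number and ergodic-theorem argument in the proof of Theorem~\ref{thm:CLT}, which uses only that $g^2\le|v|_\infty^2H^2\in L^1(Y)$ where $g(y)=\int_0^{H(y)}|v(T_sy)|\,ds$. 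With the claims in hand, \cite[Theorem~6.1]{KM16} upgrades the iterated WIP for $V$ on $Y$ to one for $\hat v$ on $Y^H$ and, through $\pi:Y^H\to M$, to one for $v$ on $(M,\mu_M)$; exactly as in Theorem~\ref{thm:CLT} this gives $(W_n,\BBW_n)\to_w(W,\BBW)$ in $C([0,\infty),\R^d\times\R^{d\times d})$ with $W=\bH^{-1/2}W^Y$, $\BBW(t)=\int_0^t W\otimes dW+Et$, $E=\bH^{-1}E_Y+\int_{Y^H}\psi\otimes\hat v\,d\mu^H$, and Stratonovich correction $D=E-\frac12\Sigma$ skew-symmetric.

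I do not expect an essential obstacle: each step is a transcription of the corresponding step for Theorems~\ref{thm:CLT} and~\ref{thm:CLT_exp}, the new ingredients (Corollaries~\ref{cor:m_prod} and~\ref{cor:Burk_prod}) having already been established. The one point requiring care is precisely that the coboundary $\chi$ is only $L^2$ rather than $L^\infty$, so the discrete-time iterated WIP in step~(i) must be deduced from the nonuniformly hyperbolic version~\cite[Theorem~5.2]{KM16}; in particular it is~\cite[Theorem~5.2]{KM16} that absorbs the coboundary cross-terms appearing in the iterated integral $\BBW_n^Y$ into the covariance $\Sigma_Y$ and drift $E_Y$, using only the ergodic theorem for the $L^1$ functions $\chi\otimes\chi$ and $\chi\otimes(\chi\circ F)$.
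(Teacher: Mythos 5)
Your proposal is correct and follows essentially the same route as the paper: the paper's proof likewise defines $W_n^Y$, $\BBW_n^Y$ as in Theorem~\ref{thm:CLT}, replaces Proposition~\ref{prop:m} and~\cite[Theorem~4.3]{KM16} by Corollary~\ref{cor:m_prod} and~\cite[Theorem~5.2]{KM16} (precisely to accommodate the merely $L^2$ coboundary), verifies claims (a)--(c) with Corollary~\ref{cor:Burk_prod} in place of Corollary~\ref{cor:Burk}, and then lifts the iterated WIP from $Y$ to $Y^H$ and to $M$ via~\cite[Theorem~6.1]{KM16}.
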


\begin{proof}
Define $W_n^Y$, $\BBW_n^Y$ on $(Y,\mu_Y)$ as in the proof of Theorem~\ref{thm:CLT}.
In place of Proposition~\ref{prop:m} and~\cite[Theorem~4.3]{KM16}, we use
Corollary~\ref{cor:m_prod} and~\cite[Theorem~5.2]{KM16} to deduce that
$(W_n^Y,\BBW_n^Y)\to(W^Y,\BBW^Y)$ in $D([0,\infty),\R^d\times\R^{d\times d})$
where $W^Y$ and $\BBW^Y$ are as in step~(i) of the proof of Theorem~\ref{thm:CLT}.

Claims (a)--(c) are verified in exactly
the same way as in the proof of Theorem~\ref{thm:CLT} (with Corollary~\ref{cor:Burk} replaced by Corollary~\ref{cor:Burk_prod}).
The remainder of the proof of step~(ii) is identical to Theorem~\ref{thm:CLT}, again using~\cite[Theorem~6.1]{KM16} to pass the iterated WIP from $Y$ to $Y^H$ and hence $M$.
\end{proof}

\begin{thm} \label{thm:moment_prod}
Define $v_t$, $S_t$ as in~\eqref{eq:vS}.
Suppose that $\inf H>0$.
\begin{itemize}
\item[(a)]
If $H\in L^p(Y)$ for some $p\ge2$, then there is a constant $C>0$ such that $|v_t|_{p-1}\le Ct^{1/2}\|v\|_\eta$ for all $v\in C^\eta_0(M,\R^d)$, $t>0$.
\item[(b)]
If $H\in L^p(Y)$ for some $p\ge4$, then there is a constant $C>0$ such that $|S_t|_{(p-1)/2}\le Ct\|v\|_\eta^2$ for all $v\in C^\eta_0(M,\R^d)$, $t>0$.
\end{itemize}
\end{thm}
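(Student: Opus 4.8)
The plan is to follow the proofs of Theorem~\ref{thm:moment} and Theorem~\ref{thm:moment_exp} essentially verbatim, substituting Corollary~\ref{cor:Burk_prod} for Corollary~\ref{cor:Burk} and Corollary~\ref{cor:m_prod} for Proposition~\ref{prop:m} (respectively Corollary~\ref{cor:m_exp}). As there, we may assume $t\ge1$, we lift to the suspension $Y^H$ via $\hat v=v\circ\pi$, and it suffices to prove that $\hat v_t:=\int_0^t\hat v\circ F_s\,ds$ satisfies $|\hat v_t|_{L^{p-1}(Y^H)}\ll t^{1/2}\|v\|_\eta$ in case (a), and that the iterated integral $\int_0^t\int_0^s(\hat v\circ F_r)\otimes(\hat v\circ F_s)\,dr\,ds$ has $L^{(p-1)/2}(Y^H)$-norm $\ll t\|v\|_\eta^2$ in case (b); these estimates then push down to $M$ via the semiconjugacy $\pi$. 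The hypothesis $\inf H>0$ is used exactly once, to bound the lap number by $|N(t)|_\infty\le at+1$ with $a=(\inf H)^{-1}$.

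Part~(a) is then identical to Theorem~\ref{thm:moment}(a): writing $\hat v_t=V_{N(t)}+\psi\circ F_t-\psi$ with $\psi(y,u)=\int_0^uv(T_sy)\,ds$, put $V^*_t(y)=\sup_{u\in[0,H(y)]}|V_{N(t)(y,u)}(y)|$, bound $|V^*_t|_p\le\big|\max_{k\le at+1}|V_k|\big|_p\ll t^{1/2}\|v\|_\eta$ by Corollary~\ref{cor:Burk_prod}, use $|\psi|\le|v|_\infty H$, and combine by H\"older since $H\in L^p$. No coboundary enters here, so there is nothing new relative to the expanding case.

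Part~(b) follows the structure of Theorem~\ref{thm:moment}(b). The heart of it is the discrete estimate $\big|\max_{0\le k\le n}|Q_k|\big|_{L^{p/2}(Y)}\ll n\|v\|_\eta^2$, where $Q_k=\sum_{0\le i<j<k}(V\circ F^i)\otimes(V\circ F^j)$. By Corollary~\ref{cor:m_prod}, $V=\bm\circ\bpi+\chi\circ F-\chi$ with $\bm\in L^p(\bY)$, $\chi\in L^p(Y)$, $\bm\in\ker P$; thus $m:=\bm\circ\bpi$ satisfies $\E(m\mid F^{-1}\cB)=0$ with $\cB=\bpi^{-1}\overline{\cB}$, and, exactly as in Theorem~\ref{thm:moment}(b), the blocks $X_{i,k}$ built from $m$ form a reverse martingale difference sequence with respect to $\{F^{-i}\cB\}$. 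Splitting $Q_k=A_1+A_2$ with $A_1$ the pure $m\otimes m$ part, two applications of Burkholder's inequality (legitimate since $p\ge4$) give $|A_1|_{p/2}\ll n\|v\|_\eta^2$. Granted the discrete bound, one passes to $Y^H$ and then to $M$ exactly as in Theorem~\ref{thm:moment}(b), using the identity expressing the iterated integral on $Y^H$ in terms of $Q_{N(t)}$, $V_{N(t)}$, $\psi$ and $\hat v_t$ from~\cite[Proposition~7.5]{KM16} and finishing with H\"older and $H\in L^p$.

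The one point where the argument genuinely departs from Theorem~\ref{thm:moment}(b)---and the step I expect to require the most care---is the coboundary remainder $A_2$, i.e.\ $\max_{0\le k\le n}$ of the difference between $Q_k$ and its pure $m\otimes m$ part: here $\chi$ lies only in $L^p(Y)$, not $L^\infty(Y)$, so the bound $|A_2|_{p/2}\ll n(|m|_{p/2}+|V|_{p/2})|\chi|_\infty$ is unavailable. After Abel summation, the dominant contribution to $A_2$ is of the form $\big(\max_k|m_k|\big)\big(\max_{k\le n}|\chi\circ F^k|\big)$, up to sums of single products that are $\ll n|m|_p|\chi|_p$ in $L^{p/2}$; estimating in $L^{p/2}$ by H\"older and using $\big|\max_k|m_k|\big|_p\ll n^{1/2}|m|_p$ (Burkholder) together with $\big|\max_{k\le n}|\chi\circ F^k|\big|_p\le n^{1/p}|\chi|_p$ yields $|A_2|_{p/2}\ll\big(n^{\frac12+\frac1p}+n\big)\|v\|_\eta^2\ll n\|v\|_\eta^2$, since $\frac12+\frac1p\le1$ for $p\ge4$. (Alternatively one may try to invoke a version of~\cite[Proposition~7.1]{KM16} directly with the $L^p$ coboundary.) All remaining estimates transcribe verbatim from Theorems~\ref{thm:moment} and~\ref{thm:moment_exp}.
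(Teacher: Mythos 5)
Your proposal is correct and follows the paper's own route: the paper's proof of Theorem~\ref{thm:moment_prod} simply says it is almost identical to Theorem~\ref{thm:moment}, with the sole difference that $\chi\in L^p(Y,\R^d)$ rather than $L^\infty$, the required estimates being ``easily checked''. Your Abel-summation treatment of the coboundary remainder, using $\big|\max_{k\le n}|\chi\circ F^k|\big|_p\le (n+1)^{1/p}|\chi|_p$ and $n^{1/2+1/p}\le n$ for $p\ge4$, is exactly the kind of check the paper leaves implicit, and it is valid.
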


\begin{proof}
The proof is almost identical to that of Theorem~\ref{thm:moment}.  The only difference is that $\chi\in L^p(Y,\R^d)$ (instead of $\chi\in L^\infty(Y,\R^d)$) but it is easily checked that the required estimates go through.
\end{proof}

\section{Singular hyperbolic attractors}
\label{sec:Lorenz}

In this section, we verify that our main results apply to codimension two singular hyperbolic attractors, as described in the introduction.

\begin{lemma} \label{lem:SH}  Codimension two singular hyperbolic attractors (Definition~\ref{def:SH})  satisfy the hypotheses in Section~\ref{sec:exp} with $\beta$ arbitrarily large.
\end{lemma}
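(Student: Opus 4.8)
The plan is to place a codimension two singular hyperbolic attractor inside the abstract framework of Section~\ref{sec:exp} by invoking the Poincar\'e section and Young tower construction of~\cite{AraujoMsub}, and then to verify the analytic conditions \eqref{eq:inf_exp}--\eqref{eq:hY}, the lower bound $\inf H>0$, and a tail estimate $\mu_Y(H>n)=O(n^{-\beta})$ with $\beta$ arbitrarily large. From~\cite{AraujoMsub} I would extract: a finite union of smooth cross-sections transverse to the flow, with Poincar\'e map $f=T_h$ whose first return time $h$ is bounded below by a positive constant and is finite but unbounded, growing like $(\lambda^u)^{-1}|\log\delta|$ for a point at transverse distance $\delta$ from a local stable manifold of a Lorenz-like equilibrium (with $\lambda^u>0$ the expanding eigenvalue); a measurable partition $\cW^s$ of the section into stable leaves, realised as segments of genuine strong stable manifolds and hence tangent to $E^s$; and a subset $Y$ with return time $\tau$ constant on stable leaves, inducing $F=f^\tau:Y\to Y$ whose quotient $\bF:\bY\to\bY$ by $\cW^s$ is a full-branch Gibbs--Markov map with invariant measure $\bmu=\bpi_*\mu_Y$. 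Because $h$ is only logarithmically large near the equilibria while the $\mu_Y$-measure of the transverse $\eps$-neighbourhood of a local stable manifold is $O(\eps^{c_0})$ for some $c_0>0$, the set $\{y\in Y:H(y)>n\}$ has $\mu_Y$-measure $O(e^{-cn})$ for some $c>0$; in particular $H\in L^p(Y)$ for every $p<\infty$, so $\beta$ may be taken arbitrarily large. This also gives integrability of $\tau$ and $H$, hence the measures $\mu_X$, $\mu_M$ of Section~\ref{sec:prelim}, and all the structural assumptions of Section~\ref{sec:stand}.

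It then remains to check the conditions of Section~\ref{subsec:exp}. The bound $\inf H>0$ is immediate from $H\ge h\ge\inf h>0$. For \eqref{eq:holder_exp} I would use only that the flow speed is bounded on the compact set $\Lambda$, so $d(T_ty,T_{t'}y)\ll|t-t'|$ and one may take $\eta=1$ there. Condition \eqref{eq:Wsexp} is where singular hyperbolicity enters directly: for $y'\in W^s_y$ the points $y,y'$ lie on a common strong stable manifold, and the uniform contraction $\|DT_t|E^s_x\|\le C\lambda^t$ of Definition~\ref{def:SH}(ii) gives $d(T_ty,T_ty')\le C\lambda^t d(y,y')$, which is \eqref{eq:Wsexp} once $\gamma_0\in[\lambda,1)$ is fixed and $\diam\Lambda$ absorbed into the constant.

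The two transverse estimates \eqref{eq:inf_exp} and \eqref{eq:hY} concern $y,y'\in\hY\cap Y_j$, which project to $\bY_j$ with the same separation time $s(y,y')$. Such points have comparable transverse distances $\delta(y)$, $\delta(y')$ to the relevant local stable manifolds, so $|\log\delta(y)-\log\delta(y')|\ll\gamma_0^{s(y,y')}$; multiplying by $(\lambda^u)^{-1}$ and comparing with $\inf_{Y_j}H\approx(\lambda^u)^{-1}|\log\delta|$ yields \eqref{eq:inf_exp}. Likewise, the backward contraction and bounded distortion estimates for the hyperbolic Poincar\'e map supplied by~\cite{AraujoMsub}, transported to the flow using boundedness of the flow speed and the fact that each return consumes one symbol of separation time (so after the $J(t)$th return the residual separation is $s(y,y')-J(t)(y)$), yield \eqref{eq:hY}. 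The main obstacle is precisely this last step: extracting the sharp, $\inf_{Y_j}H$-weighted forms of \eqref{eq:inf_exp} and \eqref{eq:hY} requires controlling the roof function and flow-orbit distances along the unstable direction in a neighbourhood of the equilibria, where the natural coordinates are genuinely singular and unweighted distortion bounds degenerate; checking that the constants furnished by~\cite{AraujoMsub} combine uniformly over the countably many partition elements $Y_j$ into these weighted bounds is the technical core of the lemma, and everything else is bookkeeping.
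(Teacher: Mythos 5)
Your overall strategy is the paper's: import the section/tower structure of~\cite{AraujoMsub} and verify \eqref{eq:inf_exp}--\eqref{eq:hY} together with a superpolynomial tail for $H$. But two of your steps have genuine gaps. First, the tail estimate: you claim $\mu_Y(H>n)=O(e^{-cn})$ directly from the fact that $h$ is only logarithmically large near the equilibria and that transverse neighbourhoods of local stable manifolds have measure $O(\eps^{c_0})$. That argument controls the first return time $h$, not the induced roof function $H=\sum_{\ell=0}^{\tau-1}h\circ f^\ell$: $H$ can be large either because one excursion passes close to an equilibrium or because $\tau$ is large, and in~\cite{AraujoMsub} the inducing time $\tau$ is only known to have polynomial tails (with arbitrarily large exponent), so exponential tails for $H$ neither follow from your reasoning nor are available. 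What is true, and all the lemma needs, is the combination in Proposition~\ref{prop:H} and Remark~\ref{rmk:beta}(c): $\mu_X(h>t)=O(e^{-ct})$ and $\mu_Y(\tau>n)=O(n^{-\beta})$ give $\mu_Y(H>t)=O((\log t)^\beta t^{-\beta})$, i.e.\ $\beta$ arbitrarily large but not exponential.

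Second, you misplace the technical core. The $\inf_{Y_j}H$-weighting in \eqref{eq:inf_exp} that you single out is actually free: since $\inf H>0$, any uniform bound $|H(y)-H(y')|\ll\gamma_0^{s(y,y')}$ already implies the weighted form, and such a bound comes from~\cite[Proposition~8.6]{AraujoMsub} (summing $|h(\bar f^\ell y)-h(\bar f^\ell y')|$ over the $\tau(y)$ returns, which your sketch comparing $\inf_{Y_j}H$ with $(\lambda^u)^{-1}|\log\delta|$ for a single close approach does not capture, since $H$ is a sum over many returns). The genuine difficulty is \eqref{eq:hY} at flow times $t$ that land inside a single excursion passing arbitrarily close to a Lorenz-like equilibrium: there the time between consecutive sections is unbounded, so ``boundedness of the flow speed'' gives no control of $d(T_ty,T_ty')$, and the time shifts $|h_\ell(y)-h_\ell(y')|$, $|H_n(y)-H_n(y')|$ must be converted into spatial displacements. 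The paper needs a dedicated flow-level estimate, $|T_ux-T_ux'|\ll|x-x'|^\eps+|fx-fx'|^\eps$ for $u\in[0,h(x)]$, proved via a H\"older Hartman--Grobman linearisation near the equilibria, and then two further steps combining it with the H\"older stable foliation, the unstable cone field and backward expansion of~\cite[Proposition~4.2]{AraujoMsub} to produce the exponent $s(y,y')-J(t)(y)$. Your appeal to ``backward contraction and bounded distortion estimates\dots transported to the flow'' does not supply this, so as written \eqref{eq:hY} is unproved and the argument does not close.
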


\begin{proof}
We apply the construction in~\cite{AraujoMsub}, representing the singular hyperbolic attractor $\Lambda$ as a suspension
$\Lambda=X^h$, $X=Y^\tau$, where
$\mu_X(h>t)=O(e^{-ct})$ for some $c>0$ and $\mu_Y(\tau>n)=O(n^{-\beta})$
with $\beta$ arbitrarily large.
By Remark~\ref{rmk:beta}(c), the induced roof function $H=h_\tau$ satisfies
$\mu_Y(H>t)=O(t^{-\beta})$ with $\beta$ arbitrarily large.
(In~\cite{AraujoMsub}, our $h$ and $\tau$ are denoted $\tau$ and $\rho$ respectively.)

Singular hyperbolic attractors have the property that there is a H\"older stable foliation for the flow~\cite[Theorem~6.2]{AraujoMsub} and hence the projection
$\bpi:Y\to\bY$ extends to $\bpi:X\to\bX$.
As in~\cite[Section~9]{AraujoMsub}, we can suppose without loss that
$h$ and $H$ are constant along stable leaves for the flow and hence
are well-defined on $\bX$ and $\bY$ respectively.
Define the quotient Poincar\'e map $\bar f:\bX\to\bX$.
By~\cite[Lemma~8.4]{AraujoMsub}, the induced quotient map
$\bF=\bar f^\tau:\bY\to\bY$ is Gibbs-Markov.

By~\cite[Proposition~8.6]{AraujoMsub},
\begin{align*}
|H(y)-H(y')|\le
\SMALL \sum_{\ell=0}^{\tau(y)-1}|h(\bar f^\ell y)-h(\bar f^\ell y')|
\ll |\bF y-\bF y'|^\eps
\quad\text{for $y,y'\in \bY\!_j$, $j\ge1$.}
\end{align*}
Hence $|H(y)-H(y')|\ll \gamma_0^{s(y,y')}$ for $y,y'\in \bY\!_j$, $j\ge1$.
It follows immediately that
$|H(y)-H(y')|\ll \gamma_0^{s(y,y')}$ for $y,y'\in Y_j$, $j\ge1$,
so condition~\eqref{eq:inf_exp} is satisfied.
Also, since the stable foliation is H\"older,
\begin{align} \label{eq:AFLV}
|H(y)-H(y')|
\ll |F y-F y'|^{\eps^2}
\quad\text{for $y,y'\in Y_j$, $j\ge1$.}
\end{align}
Condition~\eqref{eq:holder_exp} is immediate from smoothness of the flow (with $\eta=1$).
Singular hyperbolic flows contract exponentially along stable leaves by definition, so~\eqref{eq:Wsexp} holds.
Finally we verify~\eqref{eq:hY}.  There are three steps.
\\[.75ex]
{\bf Step 1.}
We show that $|T_u x-T_u x'|\ll |x-x'|^\eps+|f x-fx'|^\eps$
for all $x,x'\in X$, $u\in[0,h(x)]$.
The first return time $h$ is either bounded or of the form $h=h_1+h_2$ where $h_1$ is bounded and $h_2$ is the first hit map in a neighbourhood of a steady-state for the flow.  The claim is clear for bounded return times, so it remains to consider  the first hit map $h_2$.  Again, the claim is immediate if the flow near the equilibrium is linear.
By a H\"older version of the Hartman-Grobman theorem, linearity always holds up to a bi-H\"older change of coordinates completing the proof of Step 1.
\\[.75ex]
{\bf Step 2.}
We show that $|T_t y-T_t y'|\ll |F y-Fy'|^{\eps^2}$
for all $y,y'\in \hY_j$, $j\ge1$,
$t\in[0,H(y)]\cap [0,H(y')]$.
There exists $0\le \ell,\ell'\le \tau(y)-1$ such that
$t\in[h_\ell(y),h_{\ell+1}(y)]
\cap [h_{\ell'}(y'),h_{\ell'+1}(y')]$.  Without loss $\ell\le \ell'$.
Then
$t=h_\ell(y)+u=h_\ell(y')+u'$ where $u\in[0,h(f^\ell(y)]$ and
$u'=t-h_\ell(y')\ge t-h_{\ell'}(y')\ge0$.
By Step 1 and smoothness of the flow,
\begin{align*}
|T_ty-T_ty'| & =|T_u f^\ell y - T_{u'}f^\ell y'|
\le |T_u f^\ell y - T_uf^\ell y'|
+ |T_u f^\ell y' - T_{u'}f^\ell y'|
\\ & \ll |f^\ell y-f^\ell y'|^\eps+|f^{\ell+1} y-f^{\ell+1} y'|^\eps +|u-u'|
\\ & =|f^\ell y-f^\ell y'|^\eps+ |f^{\ell+1} y-f^{\ell+1} y'|^\eps +|h_\ell(y)-h_\ell(y')|.
\end{align*}
By~\eqref{eq:AFLV},
$|h_\ell(y)-h_\ell(y')|\ll |F y -F y'|^{\eps^2}$.
By the construction in~\cite{AraujoMsub} which is based on~\cite{AFLV11,AlvesLuzzattoPinheiro05},
$|f^\ell y-f^\ell y'|\ll |\bF y-\bF y'|^\eps\ll |Fy-Fy'|^{\eps^2}$ for $\ell\le \tau(y)$.
Hence, $|T_ty-T_ty'|\ll |F y-F y'|^{\eps^2}$ as required.
\\[.75ex]
{\bf Step 3.}  We complete the verification of~\eqref{eq:hY}.
Let $y,y'\in\hY\cap Y_j$.
Let $n=J(t)(y)$, $n'=J(t)(y')$ where $J(t)$ is as in~\eqref{eq:hY},
and suppose without loss that $n\le n'$ and $n\le s(y,y')$.
Write
$T_ty=T_uF^ny$, $T_ty'=T_{u'}F^ny'$
where $u\in[0,H(F^ny)]$ and $u'\ge0$.
By Step 2 and existence of an unstable cone field~\cite[Proposition~4.2]{AraujoMsub},
$|T_uF^ny-T_uF^ny'|\ll |F^{n+1}y-F^{n+1}y'|^\eps$.
Using this and smoothness of the flow,
\begin{align*}
|T_ty-T_ty'| & =|T_u F^n y - T_{u'}F^n y'|
\le |T_u F^n y - T_uF^n y'|
+ |T_u F^n y' - T_{u'}F^n y'|
\\ & \ll |F^{n+1} y-F^{n+1} y'|^\eps +|H_n(y)-H_n(y')|.
\end{align*}
By~\eqref{eq:AFLV}, $|T_ty-T_ty'|\ll \sum_{j=1}^{n+1}|F^jy-F^jy'|^{\eps^2}$.
By~\cite[Proposition~4.2]{AraujoMsub}, there exists $\lambda_1\in(0,1)$ such that
$|\bar F^{s(y,y')}y-\bar F^{s(y,y')}y'|
\ge \lambda_1^{-(s(y,y')-j}|F^jy-F^jy'|$, so
$|T_ty-T_ty'|\ll \gamma_0^{s(y,y')-n}=\gamma_0^{s(y,y')-J(t)(y)}$ as required
with $\gamma_0=\lambda_1^{\eps^2}$.
\end{proof}

\appendix

\section{Tails for induced roof functions}

For completeness, we state and prove the following standard result.

\begin{prop} \label{prop:H}
$\mu_Y(H>t)\le \mu_Y(\tau >n)+ \bar\tau \mu_X(h>t/n)$
for all $n\ge1$, $t>1$.
\end{prop}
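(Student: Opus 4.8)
The plan is to decompose the event $\{H>t\}$ according to the size of $\tau$ and then apply pigeonhole to the defining formula $H(y)=\sum_{\ell=0}^{\tau(y)-1}h(f^\ell y)$ from~\eqref{eq:H}. Fix $n\ge1$ and $t>1$. First I would write $\{H>t\}\subseteq\{\tau>n\}\cup\big(\{H>t\}\cap\{\tau\le n\}\big)$. On the second set, $H(y)$ is a sum of $\tau(y)\le n$ nonnegative terms whose total strictly exceeds $t$; were every term $\le t/n$, the sum would be at most $\tau(y)\,t/n\le t$, a contradiction. Hence there is an index $\ell$ with $0\le\ell\le\tau(y)-1\le n-1$ and $h(f^\ell y)>t/n$, giving the pointwise inclusion
\[
\{H>t\}\cap\{\tau\le n\}\subseteq\bigcup_{\ell=0}^{n-1}\big(\{\ell<\tau\}\cap\{h\circ f^\ell>t/n\}\big),
\]
where $f^\ell y$ is well defined because $y\in Y\subset X$ and $f:X\to X$. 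Subadditivity of $\mu_Y$ then yields
\[
\mu_Y(H>t)\le\mu_Y(\tau>n)+\sum_{\ell=0}^{n-1}\mu_Y\big(\{\ell<\tau\}\cap\{h\circ f^\ell>t/n\}\big).
\]

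It remains to bound the sum by $\bar\tau\,\mu_X(h>t/n)$. Since the summands are nonnegative, I would extend the sum to all $\ell\ge0$ and then recognise it using the construction of $\mu_X$ recalled in Subsection~\ref{sec:prelim}: one has $\mu_X=\pi_*\mu^\tau$, where $\pi:Y^\tau\to X$ is $\pi(y,\ell)=f^\ell y$ and $\mu^\tau=(\mu_Y\times{\rm counting})/\bar\tau$ on $Y^\tau=\{(y,\ell):0\le\ell\le\tau(y)-1\}$. For any measurable $A\subset X$, $\pi^{-1}A=\{(y,\ell)\in Y^\tau:f^\ell y\in A\}$, so
\[
\mu_X(A)=\mu^\tau(\pi^{-1}A)=\bar\tau^{-1}\sum_{\ell\ge0}\mu_Y\big(\{y\in Y:\ell\le\tau(y)-1,\ f^\ell y\in A\}\big).
\]
Taking $A=\{h>t/n\}$ and multiplying by $\bar\tau$ gives $\sum_{\ell\ge0}\mu_Y\big(\{\ell<\tau\}\cap\{h\circ f^\ell>t/n\}\big)=\bar\tau\,\mu_X(h>t/n)$, and combining with the previous display proves the claim.

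There is no substantial obstacle; the result is elementary. The only points needing care are the index bookkeeping ($\ell\le\tau(y)-1$ versus $\ell<\tau(y)$, equivalent since $\tau$ is integer-valued) and the use of the strict inequality $H(y)>t$ together with $\tau(y)\le n$ in the pigeonhole step, since with $\ge$ in place of $>$ the bound $\tau(y)\,t/n\le t$ would not produce a contradiction.
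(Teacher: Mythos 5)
Your proof is correct and follows essentially the paper's route: split on $\{\tau\le n\}$ versus $\{\tau>n\}$, use pigeonhole on $H(y)=\sum_{\ell=0}^{\tau(y)-1}h(f^\ell y)$ to find a level with $h\circ f^\ell>t/n$, and convert to $\mu_X$ via the tower measure $\mu^\tau=(\mu_Y\times{\rm counting})/\bar\tau$ and $\mu_X=\pi_*\mu^\tau$. The only cosmetic difference is that the paper selects a single maximizing level $\ell(y)$ and measures its graph inside $Y^\tau$, whereas you union-bound over all levels and then use the exact level-by-level expression for $\bar\tau\,\mu_X(h>t/n)$; both give the same bound with the same constant $\bar\tau$.
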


\begin{proof}
Recall that 
$\mu^\tau=c^{-1}(\mu_Y\times{\rm counting})$,
$c=\bar\tau$.
Also, $\mu_X=\pi_*\mu^\tau$ where $\pi(y,\ell)=f^\ell y$.

Let $h^*(y)=\max\{h(y),\,h(fy),\,\dots,h(f^{\tau(y)-1}y)\}$ and
define
\[
\ell(y)=\min\{\ell\in\{0,\dots,\tau(y)-1\}:h(f^\ell y)=h^*(y)\}.
\]
Then
\begin{align*}
\mu_Y \{ &    H>t,\,\tau\le n\}
\le \mu_Y(h^*>t/n)=
  \mu_Y\{y\in Y:h(f^{\ell(y)}y) > t/n\}
\\ & = c\mu^\tau\{(y,0)\in Y^\tau:h(f^{\ell(y)}y) > t/n\}
= c \mu^\tau\{(y,\ell(y))\in  Y^\tau:h(f^{\ell(y)}y) > t/n\}
\\ & = c\mu^\tau\{(y,\ell(y))\in  Y^\tau:h\circ\pi(y,\ell(y)) > t/n\}
\le  c\mu^\tau\{p\in  Y^\tau:h\circ\pi(p) > t/n\}
\\ & =  c\mu_X\{x\in X:h(x) > t/n\}.
\end{align*}
Hence $\mu_Y(H>t)\le \mu_Y(\tau >n)+\mu_Y(H>t,\,\tau\le n)
\le \mu_Y(\tau >n)+c\mu_X(h>t/n)$ as required.
\end{proof}

\begin{rmk}  \label{rmk:beta}
For completeness, we list some special cases of Proposition~\ref{prop:H}:
\begin{itemize}

\parskip = -2pt
\item[(a)]
If $\mu_X(h>t)=O(t^{-\beta})$ and $\mu_Y(\tau >n)$ decays exponentially,
then $\mu_Y(H>t)=O((\log t)^{\beta} t^{-\beta})$.
(Choose $n=C\log t$ with $C$ large.)
\item[(b)]
If $\mu_X(h>t)=O(e^{-ct})$ and $\mu_Y(\tau >n)=O(e^{-dn})$,
then $\mu_Y(H>t)=O(e^{-(cdt)^\frac12})$.  (Choose $n=\sqrt{ct/d}$.)
\item[(c)]
If $\mu_X(h>t)=O(e^{-ct})$ and $\mu_Y(\tau >n)=O(n^{-\beta})$,
then $\mu_Y(H>t)=O((\log t)^\beta t^{-\beta})$.  (Choose $n=at(\log t)^{-1}$ with $a$ small.)
\item[(d)]
If $\mu_X(h>t)=O(t^{-\beta_1})$ and $\mu_Y(\tau >n)=O(n^{-\beta_2})$,
then $\mu_Y(H>t)=O(t^{-\beta})$ where
$\BIG\beta=\frac{\beta_1\beta_2}{\beta_1+\beta_2}$.
(Choose $n=t^{\beta_1/(\beta_1+\beta_2)}$.)
\end{itemize}
\end{rmk}

\paragraph{Acknowledgements}
The research of PB was supported in part by
Hungarian National Foundation for Scientific Research (NKFIH OTKA)
grants K104745 and K123782.
The research of IM was supported in part by a
European Advanced Grant {\em StochExtHomog} (ERC AdG 320977).
We are grateful to the referee for very helpful suggestions.

\def\polhk#1{\setbox0=\hbox{#1}{\ooalign{\hidewidth
  \lower1.5ex\hbox{`}\hidewidth\crcr\unhbox0}}}

\end{document}